\newtheorem{theorem}{Theorem}[section]
\newtheorem{lemma}[theorem]{Lemma}
\newtheorem{proposition}[theorem]{Proposition}
\newtheorem{corollary}[theorem]{Corollary}
\theoremstyle{definition}
\newtheorem{remark}[theorem]{Remark}
\numberwithin{equation}{section}
\newcommand{\OO}{\mathcal{O}}
\newcommand{\vertiii}[1]{{\left\vert\kern-0.25ex\left\vert\kern-0.25ex\left\vert #1 
    \right\vert\kern-0.25ex\right\vert\kern-0.25ex\right\vert}}
\begin{document}
\title[Observability inequalities for heat equations with potentials]
{The observability inequalities for heat equations\\ with  potentials }
\author{ Jiuyi Zhu}
\address{Jiuyi Zhu:
Department of Mathematics\\
Louisiana State University\\
Baton Rouge, LA 70803, USA\\
Email:  zhu@math.lsu.edu }
\author {Jinping Zhuge}
\address{Jinping Zhuge:
 Morningside Center of Mathematics\\
the Academy of Mathematics and Systems Science\\ Chinese Academy of Sciences\\
Beijing, China\\
Email: jpzhuge@amss.ac.cn }
\subjclass[2020]{93B07, 93B05, 35Q93} 
\keywords {Null controllability, Observability inequality, Carleman estimates}

\maketitle

\begin{abstract}
This paper is mainly concerned with the observability inequalities for heat equations with time-dependent Lipschitz potentials. The observability inequality for heat equations asserts that the  total energy of a solution is bounded above by the energy localized in a subdomain with an observability constant. For a bounded measurable potential $V = V(x,t)$, the  factor in the observability constant arising from the Carleman estimate is best known to be $\exp(C\|V\|_{\infty}^{2/3})$ (even for time-independent potentials). In this paper, we show that, for Lipschtiz potentials, this factor can be replaced by $\exp(C(\|\nabla V\|_{\infty}^{1/2} +\|\partial_tV\|_{\infty}^{1/3} ))$, which improves the previous bound $\exp(C\|V\|_{\infty}^{2/3})$ in some typical scenarios. As a consequence, with such a Lipschitz potential, we obtain a quantitative regular control in a null controllability problem.
In addition, for the one-dimensional heat equation with some time-independent bounded measurable potential $V = V(x)$, we obtain the observability inequality with optimal constant on arbitrary measurable subsets of positive measure both in space and time.

\end{abstract}





\section{Introduction}
In this paper, we mainly study the observability inequality for the heat equation with a time-dependent potential $V(x,t)$,
\begin{equation}
    \left\{
    \begin{aligned}
        y_t-\Delta y+V(x,t)y &= 0 \quad  &\text{in }& \ Q_T:=\Omega\times (0, T) , \\
        y &=0  \quad &\text{on }& \ \Sigma_T:= \partial\Omega\times (0, T), \\
        y(\cdot,0) &= y_0    &\text{on }& \  \Omega,
    \end{aligned}
    \right.
    \label{nonlinear-core}
\end{equation}
 where $\Omega\subset \mathbb R^n$ is an open bounded smooth domain and $n$ is the dimension. We denote by $\|\cdot\|_\infty$ or $\|\cdot\|_{L^\infty}$ the norm of $L^\infty(Q_T)$.{Throughout this paper, we will consider the potential $V$ with a large norm (say, $\|V\|_\infty\geq 10$).} Let $\omega\subset \mathbb R^n$ be a nonempty open set in $\Omega$. For $V(x,t)\in L^\infty(Q_T)$, it was shown in \cite{FZ00a} that the solution $y$ of (\ref{nonlinear-core}) satisfies the observability inequality
\begin{align}
\|y(x,T)\|_{L^2(\Omega)}\leq \exp\big( C (1+T^{-1}+T\|V\|_\infty+ \|V\|^{2/3}_\infty)\big) \|y\|_{L^2(\omega\times (0, T))},
\label{observe-key}
\end{align}
 where $C=C(\Omega, \omega)$ depends on $\Omega$ and $\omega$ but independent of $T$ and $V$.
 The observability inequality (\ref{observe-key}) plays an important role in the control problem for linear and nonlinear equations; see e.g., \cite{FZ00a,FZ00b}. 
 
Let us examine the components in the observability constant in (\ref{observe-key}),
 \begin{align*}
 \exp \big(C(1+ T^{-1} )\big) \exp(CT\|V\|_\infty) \exp(C\|V\|^{2/3}_\infty).
 \end{align*}
The first two factors of the above  constant seem necessary in the observability inequalities.
In the absence of the potential, i.e., $V(x,t)=0$, the observability constant is simply $\exp \big(C(1+T^{-1})\big)$. This growth for small $T$ seems to be optimal, which roots in the heat kernel; see e.g., \cite{M04}. For nontrivial potentials, the factor $\exp(CT\|V\|_\infty)$ in the observablity constant comes from the dissipativity of energy in time for heat equations; see e.g., Lemma \ref{lemm-11}. The time-independent factor $ \exp(C\|V\|^{2/3}_\infty)$ appears to be the most interesting one, arising from the Carleman estimates. We will focus on the study of this factor in the observability inequality (\ref{observe-key}).  For $\|V\|_\infty^{-2/3}\lesssim T \lesssim \|V\|_\infty^{-1/3}$, the factor $ \exp(C\|V\|^{2/3}_\infty)$ in the observability inequality (\ref{observe-key}) plays the predominant role. It was shown in \cite{DZZ08} that the growth rate  $ \exp(C\|V\|^{2/3}_\infty)$ is sharp within $T\lesssim \|V\|_\infty^{-1/3}$ for parabolic systems in even spacial dimensions for complex-valued potentials $V(x,t)$. This is based on Meshkov's  counter-example in \cite{M92}, which violates the original Landis' conjecture \cite{KL88} on the optimal decay at infinity for the solution of elliptic equation,
 \begin{align}
 \Delta u+a(x) u=0 \quad \mbox{in} \ \mathbb R^n,
 \label{mesh}
 \end{align}
 with a bounded complex-valued potential $a(x)$. In particular, Meshkov showed that there exists a nontrivial complex-valued solution of (\ref{mesh}) in $\mathbb R^2$ satisfying $|u(x)|\leq \exp(-c |x|^{4/3}).$ On the contrary, Meshkov  also showed that if a solution of (\ref{mesh}) satisfies $|u(x)| \le C_\tau \exp(-\tau |x|^{4/3})$
 for all $\tau>0$, then $u\equiv 0$. Nevertheless, the general Landis' conjecture remains open for bounded real-valued potentials, which asserts a solution of \eqref{mesh} must be trivial if it decays faster than $\exp(-|x|^{1+\varepsilon})$ for some $\varepsilon>0$.
Recently, Landis' conjecture was settled in \cite{LMNN20} for $n=2$. See also the progress for the conjecture for nonnegative potential $V(x)$ in \cite{KSW15}.

Analogous to the Landis' conjecture for (\ref{mesh}), it is an interesting open problem as asked in \cite{DZZ08} that if one can replace  $\exp(C\|V\|^{2/3}_\infty)$ by the optimal constant $\exp(C\|V\|^{1/2}_\infty)$ in (\ref{observe-key}) for the scalar heat equations. This  optimal constant is also crucial for a long-standing conjecture on the null controllability of nonlinear heat equations; see e.g., \cite{DZZ08} and references therein.
In this paper, we aim to improve the factor $\exp(C\|V\|^{2/3}_\infty)$ in the observability constant in some scenarios for Lipschitz potentials. Our main result is the following theorem.
\begin{theorem}\label{th1}
Let $V(x,t)$ be Lipschitz continuous in $Q_T$ and $y$ the solution of \eqref{nonlinear-core}. Then there exists a constant $C=C(\Omega, \omega)$ such that
\begin{align}
\| y(\cdot, T)\|_{L^2(\Omega)}\leq  \exp\{C(1+T^{-1}+T \|V\|_{\infty}+\| \nabla V\|^{1/2}_{\infty}+\|\partial_t V\|^{1/3}_{\infty})\}
\|y\|_{L^2(\omega\times (0, T))}.
\label{obser-lip}
\end{align}
\end{theorem}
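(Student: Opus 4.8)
The plan is to prove \eqref{obser-lip} via a global Carleman estimate for the parabolic operator $\partial_t - \Delta$ in which the potential term is absorbed into the left-hand side, rather than treated perturbatively. Let me set up the standard machinery: fix $\omega_0 \Subset \omega$, take a function $\psi \in C^2(\overline\Omega)$ with $\psi > 0$ in $\Omega$, $\psi = 0$ on $\partial\Omega$, and $|\nabla \psi| > 0$ on $\overline{\Omega \setminus \omega_0}$, and set $\varphi(x,t) = \frac{e^{\lambda \psi(x)} }{t(T-t)}$ and $\alpha(x,t) = \frac{e^{\lambda \psi(x)} - e^{2\lambda \|\psi\|_\infty}}{t(T-t)}$ with large parameters $\lambda, s$. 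The usual Fursikov--Imanuvilov estimate gives, for $z = e^{s\alpha} y$ (or directly for $y$ with weights), a bound of the form
\begin{align*}
\int_{Q_T} \Big( (s\varphi)^{-1}(|z_t|^2 + |\Delta z|^2) + s\lambda^2 \varphi |\nabla z|^2 + s^3\lambda^4 \varphi^3 |z|^2 \Big) e^{2s\alpha}\, dx\,dt \lesssim \int_{Q_T} e^{2s\alpha} |y_t - \Delta y|^2 \,dx\,dt + \text{(observation on } \omega).
\end{align*}
Since $y_t - \Delta y = -V y$, the source term is $\int e^{2s\alpha}|V|^2|y|^2$, and to absorb it into the $s^3\lambda^4\varphi^3|y|^2$ term one needs $\|V\|_\infty^2 \lesssim s^3\lambda^4 \inf \varphi^3 \sim s^3\lambda^4 T^{-6}$; choosing $s \sim T^2$ and then $\lambda \sim \|V\|_\infty^{1/2}$ forces the exponential factor $e^{Cs\|\alpha\|_\infty} \sim e^{C\lambda} \sim e^{C\|V\|_\infty^{2/3}}$ after optimizing — this is exactly the classical route and the source of the $2/3$ exponent.

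The improvement must come from \emph{not} throwing away the structure of $Vy$. The key idea I would pursue: instead of bounding $|Vy|$ pointwise, integrate by parts to exploit that $V$ is Lipschitz. Write $Vy \cdot (\text{test function})$ and move one derivative off; schematically, in the Carleman identity one meets cross terms like $\int e^{2s\alpha} V y \, \mathcal{P} z$ where $\mathcal{P}z$ involves $z_t$ and $\Delta z$ — integrating by parts in $x$ moves a derivative onto $V z$, producing $\nabla V \cdot \nabla z$ and $V \nabla z$ type terms, and integrating by parts in $t$ produces $\partial_t V\, z^2$ terms. One then estimates $\int e^{2s\alpha}|\nabla V||z||\nabla z| \le \|\nabla V\|_\infty \big( \epsilon \int e^{2s\alpha}|\nabla z|^2 + \epsilon^{-1}\int e^{2s\alpha}|z|^2\big)$ and similarly $\int e^{2s\alpha}|\partial_t V||z|^2 \le \|\partial_t V\|_\infty \int e^{2s\alpha}|z|^2$; the gradient term $\int e^{2s\alpha}|V||\nabla z|^2$ is trickier and I would handle it by only requiring $\|V\|_\infty \lesssim s\lambda^2 \varphi$ locally — but wait, that reintroduces $\|V\|_\infty$. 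The resolution, and the real content, is to split $V = V_{\text{low}} + V_{\text{high}}$ or to use a \emph{modified weight} adapted to $V$; more cleanly, one runs the Carleman estimate for the operator $\partial_t - \Delta + V$ directly, so that the first-order and zeroth-order contributions of $V$ appear as commutators $[\,\cdot\,, V\,]$, which are controlled by $\nabla V$ and $\partial_t V$ alone, while the "main part" of $V$ combines with the conjugated Laplacian in a way that only shifts the weight parameters. Then the absorption condition becomes $\|\nabla V\|_\infty \lesssim s\lambda^2\varphi$ and $\|\partial_t V\|_\infty \lesssim s^3\lambda^4\varphi^3$ (plus lower-order balancing), which with $s \sim T^2$, $\inf \varphi \sim T^{-2}$ gives $\lambda \gtrsim \max(\|\nabla V\|_\infty^{1/2}, \|\partial_t V\|_\infty^{1/3})$ and hence the exponential factor $e^{C\lambda} = e^{C(\|\nabla V\|_\infty^{1/2} + \|\partial_t V\|_\infty^{1/3})}$, as claimed.

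From the Carleman estimate, the observability inequality follows by the now-standard argument: combine the estimate on $(T/4, 3T/4)$ (where the weights $e^{2s\alpha}$ are bounded above and below by positive constants depending on $s, \lambda, T$) with the energy dissipation Lemma~\ref{lemm-11} to propagate the bound from the middle time slab to $t = T$, which contributes the factors $e^{C(1 + T^{-1})}$ (from $\inf_{(T/4,3T/4)} |\alpha| \sim s/T^2 \sim 1$ and $\sup |\alpha| \sim s/T^2$ giving the $T^{-1}$ after the final optimization of $s$ — actually $e^{Cs/T^2}$, and with $s\sim T^2$ this is $O(1)$, while the genuine $T^{-1}$ blow-up comes from a separate elementary energy argument near $t=0$ as in \cite{FZ00a}) and $e^{CT\|V\|_\infty}$ (from the Gronwall/dissipativity estimate $\|y(\cdot,T)\|_{L^2} \le e^{CT\|V\|_\infty}\|y(\cdot,t)\|_{L^2}$). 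The main obstacle I anticipate is making the integration-by-parts step rigorous: the conjugated function $z = e^{s\alpha}y$ does not have enough regularity a priori for the $\partial_t V$ integration by parts, so one should either work with Lipschitz $V$ via a density/mollification argument (regularize $V_\delta$, prove the estimate with constants depending only on $\|\nabla V\|_\infty, \|\partial_t V\|_\infty$, pass to the limit), or carry the derivatives on $z$ rather than on $V$ and re-derive the Carleman estimate from scratch with the $V$-terms kept inside — bookkeeping the precise powers of $s\varphi$ that multiply each commutator term is where the delicate part of the argument lies, since getting the exponents $1/2$ and $1/3$ right requires tracking that $\nabla V$ pairs with the $s\lambda^2\varphi$-weighted gradient term and $\partial_t V$ with the $s^3\lambda^4\varphi^3$-weighted zeroth-order term.
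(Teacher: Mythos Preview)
Your overall strategy matches the paper's: include $V$ in the conjugated operator (so $V\varphi$ sits as a fourth summand $B_4\varphi$ in the ``$B$-part'' of the Carleman decomposition), then handle the cross terms $\langle A_i\varphi,\, V\varphi\rangle$ by integration by parts in $x$ and $t$, producing contributions controlled by $\|V\|_\infty$, $\|\nabla V\|_\infty$, and $\|\partial_t V\|_\infty$ rather than $\|V\|_\infty^2$. That is indeed the heart of the argument, and your remark that the commutator structure is what avoids the $2/3$ exponent is exactly right.

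The genuine gap is in your parameter optimization. You propose to fix $s\sim T^2$ and let $\lambda$ grow with $\|\nabla V\|_\infty,\|\partial_t V\|_\infty$, asserting the final factor is $e^{C\lambda}$. But with the Fursikov--Imanuvilov weight you wrote, $\alpha(x,t)=\dfrac{e^{\lambda\psi(x)}-e^{2\lambda\|\psi\|_\infty}}{t(T-t)}$, the quantity $s|\alpha|$ on the middle time slab is of order $s\cdot e^{2\lambda\|\psi\|_\infty}/T^2\sim e^{2\lambda\|\psi\|_\infty}$, so the observability constant coming out of the weight comparison is $\exp\!\big(C e^{2\lambda\|\psi\|_\infty}\big)$ --- double exponential in $\lambda$, not $e^{C\lambda}$. (Your absorption bookkeeping is also slightly off: with $s\sim T^2$ and $\inf\varphi\sim T^{-2}$, the condition $\|\partial_t V\|_\infty\lesssim s^3\lambda^4\varphi^3$ would give $\lambda\gtrsim\|\partial_t V\|_\infty^{1/4}$, not $1/3$.) The paper does the opposite: it fixes $\lambda$ (and an auxiliary $s$) at large constants depending only on $\Omega,\omega$, and lets the outer Carleman parameter $\tau$ carry the dependence on $V$. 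The cross terms $\langle A\varphi,\,V\varphi\rangle$ are bounded by $C\big(\tau\lambda^2\|V\|_\infty+\tau\lambda\|\nabla V\|_\infty+T^2\|\partial_t V\|_\infty\big)\iint\eta\varphi^2$, and absorbing this into $c\,\tau^3\lambda^4\iint\eta^3\varphi^2$ (using $\eta\gtrsim T^{-2}$) forces $\tau\gtrsim T^2\big(\|V\|_\infty^{1/2}+\|\nabla V\|_\infty^{1/2}+\|\partial_t V\|_\infty^{1/3}\big)$. Since the weight ratio on the middle slab is then $\exp(C\tau/T^2)$, the claimed exponents drop out cleanly. So: keep your commutator idea, but vary $\tau$, not $\lambda$.
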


Actually the observability  constant in the inequality (\ref{obser-lip}) is shown to be
\begin{equation*}
    \exp\{C(1+T^{-1}+T \|V\|_{\infty}+\|V\|^{1/2}_{\infty}+\| \nabla V\|^{1/2}_{\infty}+\|\partial_t V\|^{1/3}_{\infty})\}.
\end{equation*}
We have skipped the term $\|V\|^{1/2}_{\infty}$ in (\ref{obser-lip}) due to the fact $2 \| V\|_\infty^{1/2} \le T^{-1} + T\| V\|_\infty$.
It is easy to see the observability constant (\ref{obser-lip}) is better than the one in (\ref{observe-key}) in some cases.
Typically, if $V(x,t)\equiv \lambda$ is some large constant, the observability inequality (\ref{obser-lip}) is obviously sharper than (\ref{observe-key}), since $|\nabla V|=\partial_t V=0$. If we consider a time-dependent potential  $V(x,t)= V_0(x) (1+t)^\beta$ for some $\beta > 0$ and some Lipschitz potential $V_0$ with $\| V\|_\infty \simeq \| \nabla V\|_\infty$, then $\| \nabla V\|^{1/2}_{\infty}+\|\partial_t V\|^{1/3}_{\infty}$ is much smaller than $\| V\|^{2/3}_\infty $ for large $T$. We will apply Carleman estimates for the proof of Theorem \ref{th1}. Different from the arguments in \cite{FZ00a}, \cite{FI96}, we will obtain quantitative Carleman estimates which take the Lipschitz norm of $V(x,t)$ into account and aim to find a sharp lower bound of the Carleman parameter $\tau$. See Lemma \ref{Carle-main} for the details.

\begin{remark}
The observability inequality (\ref{obser-lip}) shares the similar spirit with the doubling inequality for elliptic equation $\Delta u+a(x) u=0$ on smooth compact manifolds $\mathcal{M}$.
For Lipschitz potentials,
it has been shown that the doubling inequality holds
\begin{align}\label{doubling-1}
\|u\|_{L^2(\mathbb
B_{2r}(x))}\leq  \exp(C(1+\|a\|^{1/2}_{\infty} + \| \nabla a\|_\infty^{1/2}) ) \|u\|_{L^2(\mathbb
B_{r}(x))}
\end{align}
for all geodesic balls $\mathbb B_{2r}(x)\subset \mathcal{M}$; see e.g., \cite{DF88,Zh16}.
For bounded measurable potentials, the best known doubling inequality for $n\geq 3$ \cite{BK05} is
\begin{align}\label{doubling-2}
\|u\|_{L^2(\mathbb
B_{2r}(x))}\leq  \exp(C(1+\|a\|^{2/3}_{\infty})) \|u\|_{L^2(\mathbb
B_{r}(x))}.
\end{align}
Except for $n=2$ (see \cite{KSW15,LMNN20}), it is open that if the exponent $2/3$ can be improved to $1/2$ in \eqref{doubling-2}. Note that \eqref{doubling-2} and \eqref{doubling-1} can be compared to \eqref{observe-key} and \eqref{obser-lip}, respectively.
We also mention that for the optimal constants in the three-cylinder inequalities of parabolic equations, the regularity of potential functions has been seen to play a role; see e.g., \cite{Zh18,GK18}.
\end{remark}

\begin{remark}
    The Lipschitz assumption on $V$ can be relaxed to bounded measurable potentials by using the idea of \cite{ZZ23}. Precisely, suppose that we can decompose $V = V_1 + V_2$, where $V_1$ is a Lipschitz component and $V_2$ is a bounded measurable component. Then with suitable modifications to our proof, we can obtain an observability constant as
    \begin{equation}
        \exp\{C(1+T^{-1}+T \|V\|_{\infty}+\| \nabla V_1\|^{1/2}_{\infty}+\|\partial_t V_1\|^{1/3}_{\infty} + \| V_2 \|_\infty^{2/3}) \}.
    \end{equation}
    This constant could improve the one in \eqref{observe-key} in some cases of bounded measurable potentials, such as a regular principle part $V_1$ perturbed by a rough small $V_2$.
\end{remark}

 The observability inequality as (\ref{observe-key}) plays a key role in the null controllability problem,
\begin{equation}
    \left\{
    \begin{aligned}
        y_t-\Delta y+V(x,t)y &= h \mathds{1}_\omega \quad  &\text{in }& Q_T, \\
        y &=0  \quad &\text{on }& \ \Sigma_T, \\
        y(\cdot,0) &= y_0    &\text{on }& \  \Omega.
    \end{aligned}
    \right.
    \label{linear-core-1}
\end{equation}
Here $\mathds{1}_\omega$ is the characteristic function of $\omega$, and $T$ is any given positive time.  Assume that the initial state $y_0\in L^2(\Omega)$. We aim to find a control $h\in L^2(\omega\times (0, T))$ such that the associated state $y$ possesses a desired behavior at time $t=T$. We say (\ref{linear-core-1}) is null controllable at a given time $T$ or the null controllability holds for (\ref{linear-core-1}), if the associated state $y$ satisfies
\begin{align}
y(\cdot, T)=0\quad \mbox{in} \ \Omega.
\end{align}

The null controllability for linear heat equations has been well studied; see e.g., the surveys \cite{FI96,Zua07,FG06}. Since we have obtained a new observability inequality for the heat equations with a Lipschitz potential, it is natural to ask for a more regular control function $h$ for the null controllability problem \eqref{linear-core-1} such that the trajectory $y(x,t)$ is a classical solution and the control cost is bounded by the observablity constant in (\ref{obser-lip}). Precisely, we have the following theorem.

\begin{theorem}\label{th2}
Let $\Omega$ be a bounded domain of class $C^3$.
For every $T>0$, there exists $h\in C^{\alpha ,\frac{\alpha}{2}}(\omega \times (0,T))$ with any $\alpha \in (0,1)$ such that
the equation \eqref{linear-core-1} is null controllable at time $T$. Furthermore,
\begin{align}
\|h\|_{C^{\alpha, \alpha/2}(\omega \times (0,T))}
\leq  \exp\{C(1 + T^{-1}+ T \|V\|_{\infty} + \| \nabla V\|^{1/2}_{\infty} + \|\partial_t V\|^{1/3}_{\infty}) \} \|y_0\|_{L^2(\Omega)},
\label{cost}
\end{align}
where $C$ depends only on $\Omega,\omega$ and $\alpha$.
\end{theorem}

Next, we consider the particular case $n=1$ with a time-independent bounded measurable potential $V = V(x)$.
For the one-dimensional wave equation with a bounded measurable potential, the optimal factor $\exp{(C\|V\|_\infty^{1/2})}$ has been obtained in \cite{Zua93}, using the sidewise energy estimate of wave equations. A note (with no proof) in \cite{DZZ08} indicates that the optimal constant $\exp{(C\|V\|_\infty^{1/2})}$ can be reached as well for the one-dimensional heat equations, using some well-known transformation from wave into heat processes.
Inspired by the recent progress on the Landis' conjecture for (\ref{mesh}) and the deep interplay between (\ref{nonlinear-core}) and (\ref{mesh}), we will establish the optimal observability inequality  for the one-dimensional heat equation, without referring to the wave equations. Furthermore, the observability inequalities will be obtained on arbitrary measurable sets of positive measure both in space and time, which seems not possible in the previous literature.
Precisely, consider the following heat equation
\begin{equation}
    \left\{
    \begin{aligned}
        y_t-\partial_{xx} y+V(x)y &= 0 \quad  &\text{in }& \ \Omega\times (0, T) , \\
        y &=0  \quad &\text{on }& \ \partial\Omega\times (0, T), \\
        y(\cdot,0) &= y_0    &\text{on }& \  \Omega.
    \end{aligned}
    \right.
    \label{nonlinear-one}
\end{equation}
where $\Omega=(0, \frac12)$ and $V\in L^\infty(\Omega)$ is  a real-valued function. We will show a spectral inequality for (\ref{nonlinear-one}) over measurable sets of positive measure in space and then adopt the strategy in \cite{PW13} for measurable sets of positive measure in time. The observation region therefore is restricted over the product of a subset of positive measure in time and a subset of positive measure in space. 
We will adapt the ideas in \cite{KSW15} for the answer of Landis' conjecture with nonnegative potentials and reduce the general sign-changing potentials to nonnegative potentials for heat equations.
{In the following theorem, we assume $\| V\|_\infty \geq 10$ and an obvious modification is needed if $\| V\|_\infty < 10$.}

\begin{theorem}
Let $E\subset (0, T)$ be a measurable set of positive measure and $\omega$ be a measurable subset of positive measure in $\Omega$. 
If $V\in L^\infty(\Omega)$, then any solution $y(x,t)$ of \eqref{nonlinear-one} satisfies
\begin{align}
\| y(\cdot, T)\|_{L^2(\Omega)}\leq \exp( C(E, \Omega, \omega)+T \|V_-\|_{\infty}+ C(\Omega, \omega) {\|V\|^{1/2}_\infty})
\|y\|_{L^2(\omega\times E)},
\end{align}
where $V_- = \min\{V, 0 \}$ (the negative part of $V$), $ C(E, \Omega, \omega)$ depends on $E$, $\Omega$ and $\omega$, and $ C(\Omega, \omega)$ depends on $\Omega$ and $\omega$.

\label{th3}
\end{theorem}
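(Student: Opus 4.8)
The plan is to prove a spectral (Lebeau--Robbiano type) inequality for $-\partial_{xx}+V$ over measurable subsets of $\Omega$, and then run the telescoping scheme of \cite{PW13} over a measurable subset of $(0,T)$; everything is preceded by a reduction to a nonnegative potential. For the reduction, set $W:=V+\|V_-\|_\infty\ge0$, so $\|W\|_\infty\le2\|V\|_\infty$. If $\tilde y$ solves \eqref{nonlinear-one} with $W$ replacing $V$ and the same datum, then $y(x,t)=e^{\|V_-\|_\infty t}\tilde y(x,t)$, hence $\|\tilde y\|_{L^2(\omega\times E)}\le\|y\|_{L^2(\omega\times E)}$ and $\|y(\cdot,T)\|_{L^2(\Omega)}=e^{\|V_-\|_\infty T}\|\tilde y(\cdot,T)\|_{L^2(\Omega)}$. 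So it is enough to prove the inequality with constant $\exp\{C(E,\Omega,\omega)+C(\Omega,\omega)\|W\|_\infty^{1/2}\}$ for the \emph{nonnegative} potential $W$: the prefactor $e^{\|V_-\|_\infty T}$ supplies the term $T\|V_-\|_\infty$, and $\|W\|_\infty^{1/2}\le\sqrt2\,\|V\|_\infty^{1/2}$ the last term. From now on write $V\ge0$ for $W$, let $(\mu_j,\phi_j)_{j\ge1}$ be the $L^2$-orthonormal Dirichlet eigenpairs of $-\partial_{xx}+V$ on $\Omega=(0,\frac12)$, and $E_\mu:=\mathrm{span}\{\phi_j:\mu_j\le\mu\}$; since $V\ge0$, $\mu_j\ge4\pi^2j^2$ and $\dim E_\mu\lesssim\sqrt\mu$.

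The core step is the spectral inequality: for some $C=C(\omega)$, $\|u\|_{L^2(\Omega)}\le\exp\{C(\sqrt\mu+\|V\|_\infty^{1/2})\}\,\|u\|_{L^2(\omega)}$ for every $\mu>0$ and $u\in E_\mu$. I would prove it by lifting $u=\sum_{\mu_j\le\mu}a_j\phi_j$ to
\[
w(x,s):=\sum_{\mu_j\le\mu}a_j\,\phi_j(x)\cosh(\sqrt{\mu_j}\,s),\qquad(x,s)\in G:=\Omega\times(-1,1),
\]
which solves $\partial_x^2w+\partial_s^2w=V(x)w$ in $G$, vanishes on $\partial\Omega\times(-1,1)$, is even in $s$, and carries the Cauchy data $w(\cdot,0)=u$, $\partial_sw(\cdot,0)=0$ on $\Omega\times\{0\}$. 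From the expansion $w(x,s)=\sum_{k\ge0}\frac{s^{2k}}{(2k)!}(A^ku)(x)$, where $Au:=\sum_j a_j\mu_j\phi_j$ satisfies $\|A^ku\|_{L^2(\Omega)}\le\mu^k\|u\|_{L^2(\Omega)}$, one gets $\|w\|_{L^2(G)}\le\sqrt2\,e^{\sqrt\mu}\|u\|_{L^2(\Omega)}$; interior elliptic regularity for $\partial_x^2+\partial_s^2-V$ — where the merely bounded $V$ costs only a fixed power of $1+\|V\|_\infty\le e^{\|V\|_\infty^{1/2}}$ — promotes this to a $C^1$ bound $\le\exp\{C(\sqrt\mu+\|V\|_\infty^{1/2})\}\|u\|_{L^2(\Omega)}$ on $\Omega\times[-\frac12,\frac12]$ (after a periodic odd reflection in $x$ to remove the lateral boundary). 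Normalizing $\|u\|_{L^2(\Omega)}=1$ and choosing $x_0\in\Omega$ with $|w(x_0,0)|=|u(x_0)|\ge\sqrt2$, I would then invoke a quantitative unique continuation / Remez inequality for $w$: since the equation is $(-\Delta+V)w=0$ with $V\ge0$, its three-ball and propagation-of-smallness estimates can be run with Carleman parameter of order $\|V\|_\infty^{1/2}$ instead of $\|V\|_\infty^{2/3}$ — precisely the mechanism behind Landis' conjecture for nonnegative potentials, cf. \cite{KSW15} — so, combined with the a priori $C^1$ bound, the doubling index of $w$ on a fixed neighborhood of $(x_0,0)$ is $\le C(\sqrt\mu+\|V\|_\infty^{1/2})$. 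Chaining $O(1)$ three-ball steps from a small ball at $(x_0,0)$ down to the segment $\omega\times\{0\}$, using the vanishing Neumann trace $\partial_sw(\cdot,0)=0$, gives for every measurable $F\subset\omega$
\[
1=|w(x_0,0)|\le\exp\{C(\omega,|F|)(\sqrt\mu+\|V\|_\infty^{1/2})\}\,\sup_F|u|;
\]
taking $F\subset\omega$ with $|F|\ge\frac12|\omega|$ on which $\sup_F|u|\le\sqrt2\,|\omega|^{-1/2}\|u\|_{L^2(\omega)}$ (Chebyshev) yields the spectral inequality.

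Granting the spectral inequality, I would combine it with the dissipation $\|y(\cdot,t_2)\|_{L^2(\Omega)}\le\|y(\cdot,t_1)\|_{L^2(\Omega)}$ for $t_1\le t_2$ (valid since $-\partial_{xx}+V\ge0$), and carry out the \cite{PW13} telescoping around a Lebesgue density point $\ell\in(0,T]$ of $E$ — dyadic frequency blocks, the spectral inequality on each block, interpolation with the time-localized observation on $E\cap(\ell-\cdot,\ell)$, and summation of the resulting series. This argument does not need $\omega$ open; it only uses the spectral inequality, which we have established for measurable $\omega$. Tracking the constants, the potential affects only the lowest frequency block ($\sqrt{\mu}\lesssim\|V\|_\infty^{1/2}$ there), contributing a single factor $\exp\{C(\Omega,\omega)\|V\|_\infty^{1/2}\}$, while the higher blocks ($\sqrt{\mu_k}\gtrsim\|V\|_\infty^{1/2}$) behave as in the potential-free case and produce $\exp\{C(E,\Omega,\omega)\}$. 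Thus $\|y(\cdot,\ell)\|_{L^2(\Omega)}\le\exp\{C(E,\Omega,\omega)+C(\Omega,\omega)\|V\|_\infty^{1/2}\}\|y\|_{L^2(\omega\times E)}$, and $\|y(\cdot,T)\|\le\|y(\cdot,\ell)\|$ by dissipation; with the first reduction this is the claim.

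The main obstacle is the spectral inequality — more precisely, the quantitative unique continuation for $(-\Delta+V)w=0$ that must at once (i) display the sharp $\|V\|_\infty^{1/2}$ (not $\|V\|_\infty^{2/3}$) dependence, which forces one to exploit $V\ge0$ inside the Carleman estimates as in \cite{KSW15} rather than use a generic Carleman estimate, and (ii) admit a merely measurable observation set $\omega\subset\{s=0\}$, which requires a Remez / propagation-of-smallness-from-measurable-sets mechanism rather than a cutoff supported in an open subinterval (no such subinterval need lie in $\omega$). The reduction to $V\ge0$, the lifting identities, and the \cite{PW13} iteration are routine given the cited results, but some care is needed to keep the additive $\|V\|_\infty^{1/2}$ intercept of the spectral inequality additive — and independent of $E$ — in the final exponent.
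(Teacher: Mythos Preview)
Your overall strategy matches the paper's: reduce to $V\ge 0$, lift spectral packets to a two-dimensional elliptic problem via $\cosh(\sqrt{\lambda_k}\,s)$, establish a spectral inequality with constant $\exp\{C(\sqrt\mu+\|V\|_\infty^{1/2})\}$ over measurable $\omega$, and then run the Phung--Wang telescoping over $E$. The reduction, the lift, and the telescoping (including how the factor $e^{C\|V\|_\infty^{1/2}}$ survives as a single $E$-independent prefactor) are essentially identical to what the paper does.

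The one point that needs correction is your description of the KSW15 mechanism. You write that for $V\ge 0$ the three-ball estimates ``can be run with Carleman parameter of order $\|V\|_\infty^{1/2}$'' and that one should ``exploit $V\ge 0$ inside the Carleman estimates.'' This is not how the sharp exponent arises; a standard Carleman weight absorbs a zeroth-order term of size $M$ only when $\tau\gtrsim M^{2/3}$, regardless of sign. What \cite{KSW15} and the paper actually do is eliminate the potential by a positive multiplier: since $V\ge 0$ one constructs $w(x)>0$ solving $-w''+Vw=0$ on the extended interval with the two-sided bound $e^{-\sqrt M}\le w\le e^{\sqrt M}$ and $\|w\|_{C^{1,1}}\le e^{C\sqrt M}$ (this is the only place the exponent $1/2$ enters). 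Then $v=u/w$ solves $\mathrm{div}(w^2\nabla v)=0$ in the plane, and because the problem is two-dimensional one introduces a stream function $\tilde v$ so that $g=w^2v+i\tilde v$ satisfies $\bar\partial g=\tilde q_0 g$ with $\|\tilde q_0\|_\infty\le C\sqrt M$, hence $g=e^{W}h$ with $h$ holomorphic and $\|W\|_\infty\le C\sqrt M$. This factorization delivers both of your desiderata at once: the $\sqrt M$ dependence sits entirely in the bounded prefactor $e^W$, and the Remez/propagation-of-smallness from the measurable set $\omega\subset\{s=0\}$ is applied to the \emph{holomorphic} factor $h$, for which such inequalities are classical. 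The Neumann condition $\partial_s u=0$ is used to force $\tilde v=0$ on $\{s=0\}$, so that $|g|=w^2|v|\approx |u|$ there. Without this holomorphic reduction it is genuinely unclear how you would combine a merely measurable observation set with the optimal exponent --- which you yourself flag as the obstacle. So: same architecture, but the engine inside the spectral inequality is complex-analytic (multiplier $+$ stream function $+$ Remez for holomorphic functions), not a Carleman estimate.
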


In particular, if $E=(0, T)$, we have the following corollary.

\begin{corollary}
Let $\omega$ be a measurable subset of positive measure in $\Omega$ and $V\in L^\infty(\Omega)$. 
Then any solution $y(x,t)$ of \eqref{nonlinear-one} satisfies
\begin{align}\label{obs-0T-V}
& \| y(\cdot, T)\|_{L^2(\Omega)}
\leq  \exp( C(T^{-1} +T\|V_-\|_{\infty}+ {\|V\|^{1/2}_\infty} ))\|y\|_{L^2(\omega\times (0, T))}, 
\end{align}
where $ C$ depends on $\Omega$ and $\omega$.
\label{cor1}
\end{corollary}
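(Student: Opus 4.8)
The statement is the special case $E=(0,T)$ of Theorem \ref{th3}; rather than tracking how the non-quantitative constant $C(E,\Omega,\omega)$ there behaves when $E$ is the whole interval, I would give the following short self-contained derivation. Since $V=V(x)$ is time independent, introduce the gauge change $\tilde y(x,t):=e^{-t\|V_-\|_\infty}y(x,t)$. A direct computation shows that $\tilde y$ solves the heat equation \eqref{nonlinear-one} with $V$ replaced by the shifted potential $\tilde V:=V+\|V_-\|_\infty$, which satisfies $0\le \tilde V\le 2\|V\|_\infty$, and with the same initial datum; in particular the operator $\tilde A:=-\partial_{xx}+\tilde V$ with Dirichlet conditions is nonnegative, so $e^{-t\tilde A}$ is a contraction semigroup on $L^2(\Omega)$. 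Since $e^{-t\|V_-\|_\infty}\le1$ on $(0,T)$ we have $\|\tilde y\|_{L^2(\omega\times(0,T))}\le\|y\|_{L^2(\omega\times(0,T))}$, while $\|y(\cdot,T)\|_{L^2(\Omega)}=e^{T\|V_-\|_\infty}\|\tilde y(\cdot,T)\|_{L^2(\Omega)}$. Hence it suffices to establish the observability inequality for $\tilde y$ with constant $\exp(C(T^{-1}+\|\tilde V\|_\infty^{1/2}))$ and then multiply through by $e^{T\|V_-\|_\infty}$; this is exactly where the term $T\|V_-\|_\infty$ in \eqref{obs-0T-V} comes from.

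For the reduced, nonnegative-potential problem I would use the spectral inequality established in the proof of Theorem \ref{th3} --- the step that uses the one-dimensional structure and the reduction to nonnegative potentials in the spirit of \cite{KSW15}. It gives a constant $C=C(\Omega,\omega)$ such that, for every $\mu\ge1$ and with $\Pi_\mu$ the spectral projection of $\tilde A$ onto eigenvalues $\le\mu$,
\[
\|\Pi_\mu z\|_{L^2(\Omega)}\le \exp\!\big(C\sqrt{\mu}+C\|\tilde V\|_\infty^{1/2}\big)\,\|\Pi_\mu z\|_{L^2(\omega)},\qquad z\in L^2(\Omega).
\]
Feeding this into the classical Lebeau--Robbiano telescoping (see e.g. the surveys \cite{FI96,Zua07,FG06}) --- partition $(0,T)$ into alternating control and free subintervals of geometrically decreasing length with frequency cutoffs $\mu_k\uparrow\infty$ growing geometrically, on the $k$-th control interval steer the modes $\le\mu_k$ to zero using the spectral inequality, and on the $k$-th free interval let the modes $>\mu_k$ dissipate like $e^{-\mu_k(\text{length})}$ (with no potential cost, since $\tilde A\ge0$) --- one obtains a null control for $\tilde y$, hence an observability estimate over $\omega\times(0,T)$, with cost $\exp(C(T^{-1}+\|\tilde V\|_\infty^{1/2}))$. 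Undoing the gauge change then yields \eqref{obs-0T-V} with $C=C(\Omega,\omega)$.

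The routine part is the geometric-series bookkeeping in the telescoping; the point that requires care --- and is the main obstacle --- is to verify that the potential enters the final cost only at the exponent $1/2$ and only as a single common multiplicative factor, rather than once per scale. The cost charged at scale $k$ is of size $\exp(C\sqrt{\mu_k}+C\|\tilde V\|_\infty^{1/2})$, but it acts on a state that the previous free phases have already contracted; for a suitable choice of the partition ratios and of the cutoffs $\mu_k$ the accumulated dissipation strictly dominates the growth $\exp(C\sqrt{\mu_k})$ at every scale, so the total cost is bounded by $\exp(C\|\tilde V\|_\infty^{1/2})\sum_k(\text{summable terms})\lesssim \exp(C\|\tilde V\|_\infty^{1/2})\exp(CT^{-1})$, and $\|\tilde V\|_\infty^{1/2}\le\sqrt2\,\|V\|_\infty^{1/2}$. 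Keeping the potential at exponent $1/2$ throughout is precisely what the one-dimensional spectral inequality makes possible, and is the reason \eqref{obs-0T-V} improves on \eqref{observe-key} in the term depending on $\|V\|_\infty$.
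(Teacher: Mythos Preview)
Your proposal is correct and shares all the key ingredients with the paper: the gauge change $\tilde y=e^{-t\|V_-\|_\infty}y$ to reduce to a nonnegative potential (done identically at the end of the proof of Theorem~\ref{th3}), the spectral inequality \eqref{spec-in-3}, and a telescoping argument. The only difference is in how the telescoping is packaged. You propose a Lebeau--Robbiano control construction (alternating control/dissipation phases on geometrically shrinking subintervals, then passing to observability by duality), whereas the paper simply revisits the Phung--Wang telescoping already carried out in the proof of Theorem~\ref{th3}: when $E=(0,T)$ one may take $k=0$ and $k_{m+1}=2^{-m}T$, so $k_1-k_2=\tfrac12 T$ and the factor $e^{-a/(k_1-k_2)}$ in \eqref{more-1} becomes $e^{-2a/T}$ with $a=a(\Omega,\omega)$; this yields \eqref{obs-0T-V} in one line. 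Your route is sound but longer; the paper's is a direct specialization that avoids redoing the telescoping and never leaves the observability side.
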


 The organization of the paper is as follows. In Section 2, we present some preliminary knowledge on heat equations. In Section 3, we show
 the quantitative global Carleman estimate and the corresponding observability inequality in Theorem \ref{th1}. In Section 4, we derive the null controllability for linear heat equations and obtain the existence of a regular control function, including the proof of Theorem \ref{th2}.
 In Section 5, we obtain the observability inequality for the one-dimensional heat equation, including the proofs of Theorem \ref{th3} and Corollary \ref{cor1}.
The Appendix includes some useful regularity estimates for heat equations, needed for obtaining the regularity estimates of control functions.
 Throughout the paper, the letters $C$, $c$ or $C_i$ denote positive constants that do not depend on $V$ or the solution $y$, and may vary from line to line.

 \textbf{Acknowledgements.} Jiuyi Zhu is partially supported by NSF DMS-2154506.  Jinping Zhuge is partially supported by NNSF of China (No. 12494541, 12288201, 12471115). 

\section{Preliminaries}
In this section, we present some basic knowledge on the well-posedness and energy estimates of solutions of parabolic equations.
Let $S_T=L^2(0, T; H^1_0(\Omega))\cap H^1(0, T; H^{-1}(\Omega))$ and
\begin{equation}
    \| y \|_{S_T}^2: = \int_0^T \big( \| y(\cdot,t) \|_{H^1(\Omega)}^2 + \| y_t(\cdot,t) \|_{H^{-1}(\Omega)}^2 \big) dt.
\end{equation}
The following embedding is well-known
\begin{align}
S_T\hookrightarrow C(0, T; L^2(\Omega)).
\label{imbed}
\end{align}

Consider the inhomogeneous heat equation,
\begin{equation}
    \left\{
    \begin{aligned}
        y_t-\Delta y+V(x, t)y &= F \quad  &\text{in }& \ Q_T, \\
        y &=0  \quad &\text{on }& \ \Sigma_T, \\
        y( \cdot,0) &= y_0    &\text{on }& \  \Omega.
    \end{aligned}
    \right.
    \label{basic-in}
\end{equation}
The following well-posedness and energy estimate are classical (see \cite{Ev10} for a proof).  Let $V_- = \min \{ V, 0 \}$.
\begin{lemma}\label{lemm-11}
Let $V(x, t)\in L^\infty(Q_T)$, $F\in  L^2(Q_T)$ and $y_0\in L^2(\Omega)$. The Cauchy problem (\ref{basic-in}) admits a unique weak solution
$y\in S_T$. Furthermore, there exists $C(\Omega)$ such that
\begin{align}
\int_0^T \| y(\cdot,t) \|_{H^1(\Omega)}^2 dt + \max_{0\leq t\leq T}\| y(\cdot,t) \|_{L^2(\Omega)}^2 \leq C e^{CT \|V_-\|_{\infty}} (\|y_0\|_{L^2(\Omega)}^2+ \|F\|_{L^2(Q_T)}^2 ).
\label{reg-first}
\end{align}
\end{lemma}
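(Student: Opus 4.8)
The plan is to follow the standard Faedo--Galerkin scheme combined with an energy identity; the only point needing attention is that \emph{only} the negative part $V_-$ of the potential should enter the exponential factor.

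First I would set up the Galerkin approximation. Let $\{e_k\}_{k\geq 1}$ be the $L^2(\Omega)$-orthonormal basis of Dirichlet eigenfunctions of $-\Delta$, let $P_m$ denote the orthogonal projection onto $\mathrm{span}\{e_1,\dots,e_m\}$, and solve the finite-dimensional (Carath\'eodory) ODE system for $y_m(t)=\sum_{k=1}^m c_k^m(t)e_k$ approximating \eqref{basic-in} with data $P_m y_0$ and forcing $P_m F$; since $V\in L^\infty(Q_T)$ the coefficient functions $t\mapsto \int_\Omega V(\cdot,t)e_je_k$ are bounded and measurable, so a unique absolutely continuous solution exists on $[0,T]$. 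Testing the $m$-th equation against $y_m$ gives the energy identity
\[
\tfrac12\frac{d}{dt}\|y_m(t)\|_{L^2(\Omega)}^2+\|\nabla y_m(t)\|_{L^2(\Omega)}^2+\int_\Omega V y_m^2\,dx=\int_\Omega (P_mF)\,y_m\,dx .
\]
Here is the key sign observation: since $V_-=\min\{V,0\}$, one has $Vs^2\geq V_-s^2\geq -\|V_-\|_\infty s^2$ for every real $s$, hence $-\int_\Omega V y_m^2\leq \|V_-\|_\infty\|y_m\|_{L^2}^2$ — the positive part of $V$ is simply discarded. Combined with Young's inequality $\int_\Omega (P_mF)y_m\leq \tfrac12\|F\|_{L^2}^2+\tfrac12\|y_m\|_{L^2}^2$, this yields
\[
\frac{d}{dt}\|y_m(t)\|_{L^2}^2+2\|\nabla y_m(t)\|_{L^2}^2\leq \big(1+2\|V_-\|_\infty\big)\|y_m(t)\|_{L^2}^2+\|F(t)\|_{L^2}^2 .
\]
Grönwall's inequality then gives $\max_{0\leq t\leq T}\|y_m(t)\|_{L^2}^2\leq e^{C(1+\|V_-\|_\infty)T}\big(\|y_0\|_{L^2}^2+\|F\|_{L^2(Q_T)}^2\big)$, and integrating the last differential inequality in $t$ bounds $\int_0^T\|\nabla y_m(t)\|_{L^2}^2\,dt$ by the same right-hand side; all bounds are uniform in $m$.

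Next I would pass to the limit. By the uniform estimates, a subsequence satisfies $y_m\rightharpoonup y$ weakly in $L^2(0,T;H^1_0(\Omega))$ and weakly-$*$ in $L^\infty(0,T;L^2(\Omega))$; moreover $\partial_t y_m=\Delta y_m-P_m(Vy_m)+P_mF$ is bounded in $L^2(0,T;H^{-1}(\Omega))$, so $y\in S_T$ and the standard argument identifies $y$ as a weak solution of \eqref{basic-in} with $y(\cdot,0)=y_0$, using the embedding \eqref{imbed} to make sense of the initial trace. The estimate \eqref{reg-first} then follows from weak (lower semi-)continuity of the $L^2(0,T;H^1)$ and $L^\infty(0,T;L^2)$ norms, after absorbing the factor $e^{CT}$ (bounded by a power of $e^{CT\|V_-\|_\infty}$) into the constant. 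For uniqueness, the difference $w$ of two $S_T$-solutions solves \eqref{basic-in} with $F=0$, $w(\cdot,0)=0$; the energy identity above is valid for $w$ (legitimate for $S_T$-functions by \eqref{imbed} and a density argument), and Grönwall forces $w\equiv 0$.

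I do not expect a genuine obstacle, since the statement is classical and referenced to \cite{Ev10}; the two items requiring care are (i) retaining dependence on $\|V_-\|_\infty$ rather than $\|V\|_\infty$, which is exactly the sign remark $Vs^2\geq -\|V_-\|_\infty s^2$ above, and (ii) justifying the energy identity for the limiting $S_T$-solution (as opposed to the smooth Galerkin approximants), which is routine given the embedding \eqref{imbed}.
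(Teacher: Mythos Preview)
The paper does not actually prove this lemma; it simply declares the result classical and cites \cite{Ev10}. Your Faedo--Galerkin argument is the standard one and is essentially correct, including the key observation that only $V_-$ enters via $\int_\Omega V y_m^2\ge -\|V_-\|_\infty\|y_m\|_{L^2}^2$.

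There is, however, one small slip. You assert that the factor $e^{CT}$ coming from the ``$+1$'' in your Gr\"onwall exponent is ``bounded by a power of $e^{CT\|V_-\|_\infty}$'' and can be absorbed into the constant. This fails when $\|V_-\|_\infty=0$ (e.g.\ $V\ge 0$), since then $e^{CT\|V_-\|_\infty}=1$ while $e^{CT}$ is unbounded in $T$; and the lemma claims $C=C(\Omega)$ independent of $T$. The fix is immediate: instead of $\int_\Omega(P_mF)y_m\le\tfrac12\|F\|_{L^2}^2+\tfrac12\|y_m\|_{L^2}^2$, use a weighted Young inequality together with Poincar\'e (valid since $y_m\in H^1_0(\Omega)$) to get
\[
\int_\Omega(P_mF)\,y_m\le \tfrac12\|\nabla y_m\|_{L^2}^2+C(\Omega)\|F\|_{L^2}^2,
\]
so that the differential inequality becomes
\[
\frac{d}{dt}\|y_m\|_{L^2}^2+\|\nabla y_m\|_{L^2}^2\le 2\|V_-\|_\infty\|y_m\|_{L^2}^2+C(\Omega)\|F\|_{L^2}^2,
\]
and Gr\"onwall now yields exactly the factor $e^{2T\|V_-\|_\infty}$ with no extraneous $e^{CT}$. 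The rest of your argument (passage to the limit, uniqueness) is fine.
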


Next, we consider the adjoint heat equation for (\ref{nonlinear-core}), which is given as
\begin{equation}
    \left\{
    \begin{aligned}
        -q_t-\Delta q+V(x, t)q &= 0 \quad  &\text{in }& Q_T, \\
        q &=0  \quad &\text{on }& \ \Sigma_T, \\
        q( \cdot, T) &= q_T    &\text{on }& \  \Omega.
    \end{aligned}
    \right.
    \label{adjoint}
\end{equation}
The adjoint heat equation (\ref{adjoint}) with a bounded potential has the following dissipativity in time of energy.
\begin{lemma}
Let $V(x, t)\in L^\infty(Q_T)$ and $q$ a solution of \eqref{adjoint}. Then there exists $C>0$ such that for any $0<t_1\leq t_2\leq T$,
\begin{align}
  \|q(\cdot,t_1)\|_{L^2(\Omega)}\leq Ce^{C{(t_2 -t_1)} \|V_-\|_{\infty}} \|q( \cdot, t_2)\|_{L^2(\Omega)}.
\label{dissipa}
\end{align}
\label{lem-dissipa}
\end{lemma}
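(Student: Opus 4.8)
The plan is to run the standard energy method on the time-reversed equation. Setting $s = T-t$ and $\tilde q(x,s) = q(x,T-s)$ turns \eqref{adjoint} into a forward heat equation of the form $\tilde q_s - \Delta \tilde q + \tilde V \tilde q = 0$ with initial datum $q_T$; by Lemma \ref{lemm-11} (with $F=0$) this has a unique solution in $S_T$, so by \eqref{imbed} the map $t \mapsto \|q(\cdot,t)\|_{L^2(\Omega)}^2$ is absolutely continuous on $[0,T]$ and $q(\cdot,t)$ is continuous in $L^2(\Omega)$ up to the endpoints. It therefore suffices to control the derivative of the energy $E(t) := \|q(\cdot,t)\|_{L^2(\Omega)}^2$.

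Testing \eqref{adjoint} with $q$ and using the duality identity $\langle -q_t, q\rangle_{H^{-1},H^1_0} = -\tfrac12\tfrac{d}{dt}\|q\|_{L^2(\Omega)}^2$ (valid for $S_T$-solutions, which is the reason \eqref{imbed} was recorded) together with the homogeneous Dirichlet condition to write $-\int_\Omega \Delta q\, q\,dx = \int_\Omega |\nabla q|^2\,dx$, I obtain for a.e.\ $t$
\[
\frac12\,\frac{d}{dt}\|q(\cdot,t)\|_{L^2(\Omega)}^2 = \int_\Omega |\nabla q|^2\,dx + \int_\Omega V(x,t)\,q^2\,dx \;\ge\; \int_\Omega V_-(x,t)\,q^2\,dx \;\ge\; -\|V_-\|_\infty\,\|q(\cdot,t)\|_{L^2(\Omega)}^2,
\]
where I discarded the nonnegative gradient term and used $V \ge V_- \ge -\|V_-\|_\infty$. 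Thus $E'(t) \ge -2\|V_-\|_\infty E(t)$ for a.e.\ $t$, i.e.\ $t \mapsto e^{2\|V_-\|_\infty t} E(t)$ is nondecreasing.

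Applying this monotonicity between $t_1$ and $t_2$ gives $e^{2\|V_-\|_\infty t_1}E(t_1) \le e^{2\|V_-\|_\infty t_2}E(t_2)$, hence $E(t_1) \le e^{2\|V_-\|_\infty(t_2-t_1)}E(t_2)$, and taking square roots yields \eqref{dissipa} (in fact with $C=1$; any $C\ge 1$ works as stated). There is no genuine obstacle here: the only point requiring care is justifying the integration by parts for weak solutions in $S_T$, which is the familiar fact that $\langle y_t, y\rangle_{H^{-1},H^1_0} = \tfrac12\tfrac{d}{dt}\|y\|_{L^2(\Omega)}^2$ for $y \in S_T$, and I would simply invoke it and the embedding \eqref{imbed} rather than reprove it.
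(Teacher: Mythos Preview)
Your proof is correct and follows essentially the same approach as the paper: multiply the adjoint equation by $q$, integrate by parts, drop the nonnegative gradient term, bound $V$ below by $-\|V_-\|_\infty$, and apply Gronwall. You are somewhat more careful than the paper in justifying the energy identity for weak $S_T$-solutions, but the argument is otherwise identical (and indeed yields $C=1$).
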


The proof of the Lemma \ref{lem-dissipa} is standard. Multiplying (\ref{adjoint}) by $q$ and integrating by parts over $\Omega$ lead to
\begin{align*}
-\frac{1}{2}\partial_t\int_{\Omega} q^2(\cdot,t)+\int_{\Omega} |\nabla q(\cdot,t)|^2 +\int_{\Omega} V(\cdot,t) q^2(\cdot,t)=0.
\end{align*}
Thus,
\begin{align*}
\partial_t \|q(\cdot,t)\|^2_{L^2(\Omega)}+2\|V_-(\cdot,t)\|_{L^\infty(\Omega)} \|q(\cdot,t)\|^2_{L^2(\Omega)}\geq 0.
\end{align*}
An application of the Gronwall's inequality concludes the proof.

\section{The quantitative global Carleman estimate}
We define a function space $Z=\{a(x,t)\in L^\infty(0, T; W^{1,\infty}(\Omega))| a_t \in  L^\infty(0, T; L^{\infty}(\Omega))\}$.
We show a quantitative Carleman estimate that takes the regularity of $V(x, t)\in Z $ into consideration. Especially, the Carleman estimate is quantitative in the sense that the lower bound of Carleman parameter $\tau$ depends explicitly on certain norms of $V$. See Lemma \ref{Carle-main} below for the statement.

The following lemma is standard, see \cite{FI96}.
\begin{lemma}
Let $\Omega'\Subset \Omega$ be a nonempty open subset. Then there exists $\xi(x)\in C^2(\overline{\Omega})$ such that $\xi(x)$ is positive in $\Omega$ and vanishes on $\partial\Omega$. Moreover, $|\nabla \xi|>0$ in $\overline{\Omega\backslash \Omega'}$.
\label{aux-s}
\end{lemma}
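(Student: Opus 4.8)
The plan is to build a convenient base function with the correct boundary behavior and only finitely many nondegenerate interior critical points, and then to relocate all of these critical points into $\Omega'$ by composing with a diffeomorphism of $\overline\Omega$ that is the identity near $\partial\Omega$. Note first that critical points cannot be avoided altogether: any continuous $\xi$ on the compact set $\overline\Omega$ that vanishes on $\partial\Omega$ and is positive in $\Omega$ attains an interior maximum, where $\nabla\xi=0$. So the goal is precisely to \emph{move} the critical set into $\Omega'$. For the base function I would take $\psi\in C^2(\overline\Omega)$ solving $-\Delta\psi=1$ in $\Omega$ with $\psi=0$ on $\partial\Omega$. Elliptic regularity on the smooth domain gives $\psi\in C^2(\overline\Omega)$, the maximum principle gives $\psi>0$ in $\Omega$, and the Hopf lemma gives $\partial_\nu\psi\neq0$ on $\partial\Omega$, so $|\nabla\psi|\geq c_0>0$ on a neighborhood $U$ of $\partial\Omega$. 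In particular every critical point of $\psi$ lies in the compact set $K:=\overline\Omega\setminus U\Subset\Omega$, on which $\psi\geq c_1>0$.

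\textbf{Reducing to finitely many critical points.} I would perturb $\psi$ to a Morse function without disturbing its boundary behavior. Fix a cutoff $\chi$ equal to $1$ on $K$ and supported in a slightly larger compact neighborhood of $K$ still contained in $\{\psi>0\}$, and consider $\tilde\psi=\psi+\eps\,\chi\,\ell$ with $\ell$ a generic linear form and $\eps$ small. By Sard's theorem $\tilde\psi$ is Morse for generic $\ell$; for $\eps$ small enough positivity of $\psi$ on $K$ is preserved, while $\tilde\psi=\psi$ near $\partial\Omega$ keeps $\tilde\psi>0$ in $\Omega$ and $|\nabla\tilde\psi|>0$ near $\partial\Omega$. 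Since nondegenerate critical points are isolated and $\overline\Omega$ is compact, $\tilde\psi$ has only finitely many critical points $x_1,\dots,x_N\in\Omega$.

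\textbf{Relocation and conclusion.} Using that $\Omega$ is connected, I would choose smooth paths inside $\Omega$ joining $x_1,\dots,x_N$ to $N$ distinct points of $\Omega'$ and realize the resulting displacement as the time-one flow $\Phi$ of a smooth vector field compactly supported in $\Omega$; then $\Phi$ is a diffeomorphism of $\overline\Omega$, equal to the identity near $\partial\Omega$, with $\Phi(x_j)\in\Omega'$. Setting $\xi:=\tilde\psi\circ\Phi^{-1}$, the chain rule $\nabla\xi(y)=(D\Phi^{-1}(y))^{\top}\nabla\tilde\psi(\Phi^{-1}(y))$ together with invertibility of $D\Phi^{-1}$ shows that the critical set of $\xi$ is exactly $\{\Phi(x_1),\dots,\Phi(x_N)\}\subset\Omega'$. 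Since $\Phi$ preserves $\Omega$ and is the identity near $\partial\Omega$, we obtain $\xi\in C^2(\overline\Omega)$, $\xi>0$ in $\Omega$, $\xi=0$ on $\partial\Omega$, and $|\nabla\xi|>0$ on $\overline{\Omega\setminus\Omega'}$, because the points $\Phi(x_j)$ lie in the open set $\Omega'$, which is disjoint from $\overline{\Omega\setminus\Omega'}$.

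The main obstacle I anticipate is the relocation step: producing a diffeomorphism that carries a prescribed finite set into $\Omega'$ while remaining the identity near $\partial\Omega$ and staying smooth. This rests on the homogeneity of connected manifolds, i.e.\ transitivity of the compactly supported diffeomorphism group on finite configurations, and needs extra care in dimension one, where a diffeomorphism of an interval is monotone, so the target points in $\Omega'$ must be listed in the same order as the $x_j$. A secondary point requiring attention is that the Morse perturbation must be localized to the region where $\psi$ is bounded below, so that positivity in $\Omega$ is not destroyed while the critical structure is regularized.
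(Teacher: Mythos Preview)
The paper does not prove this lemma; it simply cites \cite{FI96} and calls it standard. Your construction---start from the torsion function $\psi$ solving $-\Delta\psi=1$, perturb by a generic linear form localized away from $\partial\Omega$ to force finitely many nondegenerate critical points, then push those points into $\Omega'$ by a compactly supported diffeomorphism---is correct and is in fact essentially the argument one finds in Fursikov--Imanuvilov and the subsequent controllability literature.

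Two minor remarks on precision. First, your appeal to ``Sard's theorem'' for the Morse property is the right idea but is really Sard applied to the map $x\mapsto-\nabla\psi(x)$: if $a$ is a regular value of this map, then every solution of $\nabla\psi(x)+a=0$ has nonsingular Hessian, so $\psi+\langle a,\cdot\rangle$ is Morse; setting $\ell=\langle a,\cdot\rangle$ and $\eps=1$ (or rescaling) gives what you want. Second, the relocation step tacitly uses that $\Omega$ is connected, which is part of the standing assumption ``bounded smooth domain'' in the paper; you already flagged the one-dimensional ordering issue correctly, and for $n\ge 2$ the $N$-point transitivity of $\mathrm{Diff}_c(\Omega)$ is standard. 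With these clarifications your proof is complete.
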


Let $\Omega'\Subset \omega\Subset \Omega$. We introduce the following weight functions:
\begin{align}
\beta(x, t)=\frac{e^{2\lambda s\|\xi\|_{L^\infty}}- e^{\lambda(s\|\xi\|_{L^\infty}+\xi(x))}}{t(T-t) },
\end{align}
and
\begin{align}
\eta(x, t)=\frac{ e^{\lambda(s\|\xi\|_{L^\infty}+\xi(x))}}{t(T-t) }
\end{align}
for some large positive constants $s$ and $\lambda$. 

The main Carleman estimate is stated as follows.

\begin{lemma}
There exist positive constants $\lambda_0$, $$\tau_0=C(T+T^2+T^2\|V\|^{1/2}_{\infty}+T^2\|\nabla V\|^{1/2}_{\infty}+T^2\|\partial_t V\|^{1/3}_{\infty})
$$ 
and $C_1 = C_1(\Omega, \omega)$ such that,
for all $\lambda\geq \lambda_0$, $\tau\geq \tau_0$, we have
\begin{align}
C_1 \iint_{Q_T} e^{-2\tau \beta}|w_t+\Delta w&+V(x, t)w|^2 \,dxdt+ C_1\tau^3\lambda^4 \iint_{ \omega\times (0, T)}e^{-2\tau \beta} \eta^3 w^2 \,dxdt \nonumber \\
&\geq \tau^3\lambda^4 \iint_{Q_T}e^{-2\tau \beta} \eta^3 w^2 \,dxdt+ \tau\lambda^2 \iint_{Q_T}e^{-2\tau \beta} \eta |\nabla w|^2 \,dxdt \nonumber \\
&+\tau^{-1} \iint_{Q_T} e^{-2\tau \beta}\eta^{-1} (|\Delta w|^2 + |\partial_t w|^2) \,dxdt
\label{Car-est}
\end{align}
for all $w\in C^2(\overline{\Omega})$ with $w=0$ on $\Sigma_T$.
\label{Carle-main}
\end{lemma}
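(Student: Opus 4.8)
The plan is to start from the classical Fursikov--Imanuvilov Carleman estimate for the heat operator $P_0 w = w_t + \Delta w$ without potential, and then track carefully how the potential term is absorbed. Recall that for the free operator one has, for $\lambda \ge \lambda_0$ and $\tau \ge \tau_0^{(0)} = C(T + T^2)$, an estimate of the form
\begin{equation*}
  C_1 \iint_{Q_T} e^{-2\tau\beta}|P_0 w|^2 + C_1 \tau^3\lambda^4 \iint_{\omega\times(0,T)} e^{-2\tau\beta}\eta^3 w^2
  \ \ge\ \tau^3\lambda^4 \iint_{Q_T} e^{-2\tau\beta}\eta^3 w^2 + \tau\lambda^2 \iint_{Q_T} e^{-2\tau\beta}\eta|\nabla w|^2 + \tau^{-1}\iint_{Q_T} e^{-2\tau\beta}\eta^{-1}(|\Delta w|^2 + |\partial_t w|^2).
\end{equation*}
The last (second-order) term on the right is slightly stronger than in the usual statement, but it comes essentially for free by writing $\Delta w = P_0 w - w_t$ and $w_t = P_0 w - \Delta w$ and using the other right-hand-side terms to control the crossing; I would include this bookkeeping as a first step. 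Then I would substitute $P_0 w = (w_t + \Delta w + Vw) - Vw =: g - Vw$, so that $|P_0 w|^2 \le 2|g|^2 + 2V^2 w^2$, which contributes an extra term $2 C_1 \iint_{Q_T} e^{-2\tau\beta} V^2 w^2$ on the left. The crude absorption $V^2 w^2 \le \|V\|_\infty^2 w^2$ into the right-hand side $\tau^3\lambda^4 \eta^3 w^2$ requires $\tau^3 \eta^3 \gtrsim \|V\|_\infty^2$; since $\eta \gtrsim T^{-2}$, this gives only $\tau \gtrsim T^2\|V\|_\infty^{2/3}$, which is the \emph{old} bound. The whole point is to do better using the Lipschitz regularity of $V$.

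The key idea — and the main obstacle — is to avoid the naive pointwise bound on $V^2 w^2$ and instead integrate by parts in the term $\iint e^{-2\tau\beta} V \,(\text{something}) \, w$ so that derivatives fall on $V$, producing $\nabla V$ and $\partial_t V$ instead of $V$ itself (at the cost of extra powers of $\tau$, $\lambda$, $\eta$ from differentiating the weight). Concretely, rather than squaring $P_0 w = g - Vw$ directly, I would keep the cross term $\iint e^{-2\tau\beta}(\cdots) Vw$ in the energy identity that underlies the Carleman estimate and integrate by parts. One gets terms like $\iint e^{-2\tau\beta} V w \,\Delta w$ (integrate the Laplacian by parts: generates $\nabla V \cdot \nabla w \cdot w$ and $V|\nabla w|^2$ and weight-derivative terms $\tau\lambda \eta |\nabla\beta| Vw\nabla w$) and $\iint e^{-2\tau\beta} V w\, w_t$ (integrate in $t$: generates $\partial_t V\, w^2$ and $\tau |\partial_t\beta| V w^2$). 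Then:
\begin{itemize}
\item the $V|\nabla w|^2$ term is absorbed into $\tau\lambda^2 \iint e^{-2\tau\beta}\eta|\nabla w|^2$, needing $\tau\lambda^2\eta \gtrsim \|V\|_\infty$, i.e. $\tau \gtrsim T^2\|V\|_\infty$, which is fine (it is even weaker than $T^2\|V\|_\infty^{1/2}$ once combined with $\tau\ge T$, matching the remark after the theorem);
\item the $\nabla V\cdot\nabla w\, w$ term is split by Cauchy--Schwarz as $\le \epsilon \eta|\nabla w|^2 + C\epsilon^{-1}\eta^{-1}\|\nabla V\|_\infty^2 w^2$; absorbing the second piece into $\tau^3\lambda^4\eta^3 w^2$ needs $\tau^3\eta^4 \gtrsim \|\nabla V\|_\infty^2$, and since $\eta\gtrsim T^{-2}$ this reads $\tau^3 \gtrsim T^8 \|\nabla V\|_\infty^2$, i.e. $\tau \gtrsim T^{8/3}\|\nabla V\|_\infty^{2/3}$ — hmm, this is worse than the stated $T^2\|\nabla V\|_\infty^{1/2}$, so the correct split must also borrow a power from the $\tau\lambda^2\eta|\nabla w|^2$ term, taking $\epsilon\sim\tau\lambda^2\eta$ in the first piece, giving the remainder $(\tau\lambda^2\eta)^{-1}\|\nabla V\|_\infty^2 w^2$, absorbed into $\tau^3\lambda^4\eta^3 w^2$ when $\tau^4\eta^4 \gtrsim \|\nabla V\|_\infty^2$, i.e. $\tau\eta \gtrsim \|\nabla V\|_\infty^{1/2}$, i.e. $\tau \gtrsim T^2\|\nabla V\|_\infty^{1/2}$, as desired;
\item the $\partial_t V\, w^2$ term is absorbed into $\tau^3\lambda^4\eta^3 w^2$, needing $\tau^3\eta^3 \gtrsim \|\partial_t V\|_\infty$, i.e. $\tau \gtrsim T^2\|\partial_t V\|_\infty^{1/3}$, as desired;
\item the weight-derivative terms $\tau\lambda\eta|\nabla\beta| |V||w||\nabla w|$ and $\tau|\partial_t\beta| |V| w^2$ — here I would use the standard bounds $|\nabla\beta| \lesssim \lambda\eta$, $|\partial_t\beta| \lesssim T\eta^2$ — become $\tau\lambda^2\eta^2 |V||w||\nabla w|$ and $\tau T \eta^2 |V| w^2$, and both are controlled by the $\tau^3\lambda^4\eta^3 w^2$ (and $\tau\lambda^2\eta|\nabla w|^2$) terms provided $\tau^2\eta \gtrsim \|V\|_\infty$, i.e. $\tau \gtrsim T\|V\|_\infty^{1/2}$, which is again within budget.
\end{itemize}
Taking $\tau_0$ to be the maximum of $C(T+T^2)$ and the constant times $T^2\|V\|_\infty^{1/2}$, $T^2\|\nabla V\|_\infty^{1/2}$, $T^2\|\partial_t V\|_\infty^{1/3}$ — which is exactly the $\tau_0$ in the statement — all the above absorptions go through simultaneously.

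The step I expect to be genuinely delicate is organizing the integration by parts so that the cross terms involving $V$ are handled \emph{before} squaring, i.e. at the level of the energy identity $\iint |P_0 w|^2 = \iint|P^+ w|^2 + \iint|P^- w|^2 + 2\iint P^+w\,P^-w$ (splitting $P_0$ into its formally self-adjoint and skew-adjoint parts with respect to the conjugated operator $e^{-\tau\beta}P_0(e^{\tau\beta}\cdot)$), tracking precisely which boundary terms vanish using $w=0$ on $\Sigma_T$ and the blow-up of $\beta$ as $t\to 0,T$ (so no boundary contribution at the time endpoints), and verifying that differentiating the weight $e^{-2\tau\beta}$ really only costs the stated powers of $\tau,\lambda,\eta$. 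A cleaner alternative that I would actually pursue is: first prove the free-operator estimate with the displayed right-hand side (including the $\tau^{-1}\eta^{-1}(|\Delta w|^2+|\partial_t w|^2)$ term), and then treat $Vw$ as a right-hand side perturbation but split it as $\iint e^{-2\tau\beta}|P_0 w|^2 \le (1+\delta)\iint e^{-2\tau\beta}|g|^2 + (1+\delta^{-1})\iint e^{-2\tau\beta}V^2 w^2$ only when $\|V\|_\infty$ is moderate, while for large $\|V\|_\infty$ one writes instead $P_0 w = g - Vw$ and plugs $Vw$ into the lower-order terms $\eta^{-1}|\Delta w|^2$ etc. after one integration by parts moving a derivative onto $V$ — e.g. $\iint e^{-2\tau\beta}V w (w_t+\Delta w) = \iint e^{-2\tau\beta}Vwg$, which is not directly useful, so one must instead go back into the estimate's proof. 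I would therefore present the proof by revisiting the Fursikov--Imanuvilov argument and inserting the potential term into the energy identity, carrying out the four absorptions above; the remark after Theorem~\ref{th1} ($2\|V\|_\infty^{1/2}\le T^{-1}+T\|V\|_\infty$) confirms that the $T^2\|V\|_\infty^{1/2}$ contribution to $\tau_0$ does not degrade the final observability constant beyond what is claimed.
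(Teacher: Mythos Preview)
Your overall strategy---going back into the Fursikov--Imanuvilov energy identity and integrating by parts in the cross terms involving $V$ so that derivatives fall on $V$---is exactly the right idea, and it is what the paper does. But the specific organization you describe has a genuine gap that would not give the claimed $\tau_0$.

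The problem is the term $V|\nabla w|^2$ (equivalently $V|\nabla\varphi|^2$ after conjugation) that you produce from $\iint e^{-2\tau\beta} Vw\,\Delta w$ by integration by parts. You say absorbing it into $\tau\lambda^2\iint e^{-2\tau\beta}\eta|\nabla w|^2$ requires $\tau\gtrsim T^2\|V\|_\infty$ and that this is ``even weaker than $T^2\|V\|_\infty^{1/2}$''. This is backwards: for $\|V\|_\infty\ge 1$ the condition $\tau\ge T^2\|V\|_\infty$ is \emph{stronger} than $\tau\ge T^2\|V\|_\infty^{1/2}$, and it would feed an extra factor $\exp(C\|V\|_\infty)$ into the observability constant, which is worse than the old $\exp(C\|V\|_\infty^{2/3})$ for small $T$. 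The remark after Theorem~\ref{th1} concerns $\|V\|_\infty^{1/2}$, not $\|V\|_\infty$, and does not rescue this. No Cauchy--Schwarz splitting of the cross term $\langle V\varphi,\Delta\varphi\rangle$ avoids this: whether you integrate by parts or pair it directly against $\tau^{-1}\eta^{-1}|\Delta\varphi|^2$, you end up needing $\tau\eta\gtrsim\|V\|_\infty$.

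The paper's fix is structural: in the conjugated decomposition $A\varphi+B\varphi=F_{\tau,\lambda}$ one places the potential term $V\varphi$ in $B$ \emph{together with} $\Delta\varphi$ (so $B\varphi=\tau^2\lambda^2\eta^2|\nabla\xi|^2\varphi+\Delta\varphi+\tau\varphi\,\partial_t\beta+V\varphi$), while $A\varphi$ carries only first-order terms $-2\tau\lambda^2\eta|\nabla\xi|^2\varphi-2\tau\lambda\eta\nabla\xi\cdot\nabla\varphi+\partial_t\varphi$. Then the dangerous pairing $\langle\Delta\varphi,V\varphi\rangle$ sits inside $\|B\varphi\|^2\ge 0$ and is never expanded. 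The only cross terms involving $V$ are $\langle A_i\varphi,V\varphi\rangle$, $i=1,2,3$, which after one integration by parts in $x$ (for $A_2$) and in $t$ (for $A_3$) give contributions of size $(\tau\lambda^2\|V\|_\infty+\tau\lambda\|\nabla V\|_\infty+T^2\|\partial_t V\|_\infty)\iint\eta\varphi^2$; absorbing these into $\tau^3\lambda^4\iint\eta^3\varphi^2$ yields precisely the three conditions $\tau\gtrsim T^2\|V\|_\infty^{1/2}$, $\tau\gtrsim T^2\|\nabla V\|_\infty^{1/2}$, $\tau\gtrsim T^2\|\partial_t V\|_\infty^{1/3}$. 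In short: your integration-by-parts idea is correct, but you must put $V\varphi$ on the same side as $\Delta\varphi$ so that the $V|\nabla\varphi|^2$ term never arises.
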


\begin{proof}
The proof is adapted from \cite{FI96} by Fursikov and Imanuvilov. The new idea is to take advantage of the regular potential $V(x, t)\in Z$ in the Carleman estimate to find the better lower bound of the Carleman parameter $\tau$ in terms of the norms of $V(x, t)$. 

To begin with,
we introduce the conjugate operator $\varphi= e^{-\tau \beta} w$. Let
\begin{align}
f=(\partial_t + \Delta + V) w.
\label{origi-1}
\end{align}
Then
\begin{align}
e^{-\tau \beta}(\partial_t + \Delta + V) ( e^{\tau \beta} \varphi)=e^{-\tau \beta} f.
\label{origi-2}
\end{align}
Using the observations
\begin{equation}\label{Dbeta}
    \nabla \beta = - \nabla \eta = - \lambda \eta \nabla \xi, \quad \text{and} \quad \Delta \beta = -\lambda^2 \eta |\nabla \xi|^2 - \lambda \eta \Delta \xi,
\end{equation}
we can simplify the equation (\ref{origi-2}) to
\begin{align}
A \varphi+B \varphi=F_{\tau, \lambda},
\label{equation-key}
\end{align}
where
\begin{align}
A\varphi =-2\tau \lambda^2 \eta \varphi |\nabla \xi|^2 - 2\tau \lambda \eta \nabla \xi \cdot \nabla \varphi+\partial_t\varphi,
\label{identi-1}
\end{align}
\begin{align}
B \varphi =\tau^2 \lambda^2 \eta^2\varphi |\nabla \xi|^2 +\Delta \varphi+ \tau \varphi \partial_t \beta+V(t,x)\varphi,
\label{identi-2}
\end{align}
and
\begin{align*}
F_{\tau, \lambda}=e^{-\tau \beta} f-\tau \lambda^2 \eta \varphi |\nabla \xi|^2 +\lambda\tau \eta\varphi \Delta\xi .
\end{align*}
Let us denote each term  in $A\varphi$ and $B\varphi$ in order as  $A_i\varphi$ and $B_j\varphi$. Hence
\begin{align*}
A\varphi=A_1\varphi+A_2\varphi+A_3\varphi,
\end{align*}
\begin{align*}
B\varphi=B_1\varphi+B_2\varphi+B_3\varphi {+ B_4 \varphi}.
\end{align*}
We compute the $L^2$ norm for the terms in the equation (\ref{equation-key}) to get
\begin{align}
\|A\varphi\|^2+ \|B\varphi\|^2 +2 \langle A\varphi, B\varphi \rangle =\| F_{\tau, \lambda}  \|^2.
\label{equation-red}
\end{align}
Note that
\begin{align*}
\langle A\varphi, B\varphi \rangle=\sum_{j=1}^4 \sum^3_{i=1} \langle A_i\varphi, B_j\varphi \rangle.
\end{align*}

To obtain an inequality from (\ref{equation-red}), we need to
compute the inner product $\langle A\varphi, B\varphi \rangle$. There are 12 terms in the inner product $\langle A\varphi, B\varphi \rangle$. Many efforts below are devoted to estimating these 12 terms. 

We start with the terms in $\langle A \varphi, B_1\varphi \rangle$.
First, note that
\begin{equation}\label{AB1-1}
    \langle A_1\varphi, B_1\varphi \rangle = -2\tau^3 \lambda^4 \iint_{Q_T} \eta^3 |\nabla \xi|^4 \varphi^2 \,dxdt=: N_1.
\end{equation}
Next, performing the integration by parts leads to
\begin{align}
\langle A_2\varphi, B_1\varphi \rangle&= -\tau^3 \lambda^3 \iint_{Q_T} \eta^3 \nabla \xi\cdot \nabla \varphi^2 |\nabla \xi|^2 \nonumber \\
&=3\tau^3\lambda^4 \iint_{Q_T} \eta^3 |\nabla \xi|^4 \varphi^2 +\tau^3 \lambda^3 \iint_{Q_T} \eta^3 \Delta \xi |\nabla \xi|^2 \varphi^2 \nonumber \\
&+2\tau^3 \lambda^3 \iint_{Q_T} \eta^3 D_j\xi D_{ij}\xi D_i\xi \varphi^2 \nonumber \\
&=M_1+M_2+M_3,
\label{AB1-2}
\end{align}
where $M_i$ represents the terms in the second equality and the summations over $i$ and $j$ are understood in the context of Einstein notations. Since $\|\xi\|_{C^2(\Omega)}\leq C$, the sum of $M_2$ and $M_3$ can be bounded as
\begin{align}
|M_2+M_3|\leq C\tau^3 \lambda^3 \iint_{Q_T} \eta^3 \varphi^2.
\label{AB1-3}
\end{align}
Now, we combine $M_1$ with $N_1$ to obtain
\begin{align}
N_1+M_1& =\tau^3\lambda^4 \iint_{Q_T} \eta^3 |\nabla \xi|^4 \varphi^2 \nonumber \\
& =\tau^3\lambda^4 \iint_{Q_T\backslash \Omega'\times (0, \ T)}  \eta^3 |\nabla \xi|^4 \varphi^2 +\tau^3\lambda^4 \iint_{ \Omega'\times (0, \ T)}  \eta^3 |\nabla \xi|^4 \varphi^2 \nonumber \\
&\geq c\tau^3\lambda^4 \iint_{Q_T} \eta^3  \varphi^2 -C  \tau^3\lambda^4 \iint_{\Omega'\times(0, \ T) } \eta^3  \varphi^2,
\label{AB1-4}
\end{align}
where we have used the property of $\xi$ in Lemma \ref{aux-s} in the last inequality.

We consider $\langle A_3\varphi, B_1\varphi \rangle$. A direct calculation shows that
\begin{align*}
\langle A_3\varphi, B_1\varphi \rangle&=\frac{1}{2}\tau^2\lambda^2 \iint_{Q_T}  \eta^2 |\nabla \xi|^2 \partial_t \varphi^2\nonumber \\
&=-\tau^2\lambda^2 \iint_{Q_T} \partial_t \eta \eta |\nabla \xi|^2 \varphi^2,
\end{align*}
where we used the fact  $\eta \varphi \to 0$ as $ t \to 0$ or $t \to T$. To see this fact, note that $\varphi = e^{-\tau \beta} w$, and both $\beta$ and $\eta$ behave like $t^{-1}$ as $t\to 0$ and $(T-t)^{-1}$ as $t\to T$. Thus the exponential decay $\varphi$ is much stronger than the polynomial growth of $\eta$ when $t\to 0$ or $T$, which leads to the above fact. Now, by
\begin{align*}
|\partial_t \eta| = \Big| \frac{(T-2t) e^{\lambda (s\|\xi\|_{L^\infty}+\xi)}  } {(t(T-t))^2} \Big| \leq T \eta^2,
\end{align*}
we have
\begin{align}
| \langle A_3\varphi, B_1\varphi \rangle | \leq C\tau^2\lambda^2 T \iint_{Q_T}  \eta^3 |\nabla \xi|^2  \varphi^2.
\label{AB1-5}
\end{align}
Combining (\ref{AB1-1}) through (\ref{AB1-5}) and by assuming that $\lambda$ is a large constant and $\tau>CT$, we have
\begin{align}
\langle A\varphi, B_1\varphi \rangle \geq C\tau^3\lambda^4  \iint_{Q_T}  \eta^3   \varphi^2-C\tau^3\lambda^4  \iint_{\Omega'\times(0, T)}  \eta^3   \varphi^2.
\label{innAB-1}
\end{align}


Next, let us consider the terms in the inner product $\langle A\varphi, B_2\varphi \rangle$.
Performing the integration by parts shows that
\begin{align}
\langle A_1\varphi, B_2\varphi \rangle&=2\tau \lambda^2 \iint_{Q_T}  \eta  |\nabla \xi|^2 |\nabla \varphi|^2+2\tau \lambda^3 \iint_{Q_T}  \eta  |\nabla \xi|^2 \nabla \xi\cdot \nabla \varphi \varphi \nonumber \\
& +4\tau \lambda^2 \iint_{Q_T}  \eta  D_i\xi D_{ij} \xi D_j \varphi \varphi \nonumber \\
&=R_1+R_2+R_3.
\label{three-R}
\end{align}
We will keep $R_1$ and estimate $R_2$ and $R_3$. The Cauchy-Schwarz inequality shows that
\begin{align}
|R_2|
\leq C\tau^2 \lambda^4 \iint_{Q_T}  \eta^2  \varphi^2 + C \lambda^2 \iint_{Q_T}  |\nabla  \varphi|^2,
\label{three-R1}
\end{align}
where we used the fact $|\nabla \xi|\leq C$ in $Q_T$. The same reasoning also yields that
\begin{align}
|R_3| 
 \leq C{\tau^2}  \lambda^4 \iint_{Q_T}  {\eta^2}  \varphi^2 +  \iint_{Q_T}  |\nabla  \varphi|^2.
\label{three-R2}
\end{align}
Since $\eta\geq C T^{-2}$, by taking $\tau>CT^2$ for sufficiently large $C$, we obtain from (\ref{three-R})-(\ref{three-R2}) that
\begin{align}
\langle A_1\varphi, B_2\varphi \rangle &\geq {2\tau \lambda^2} \iint_{Q_T} \eta  |\nabla \xi|^2 |\nabla \varphi|^2- C\tau^2 \lambda^4 \iint_{Q_T} \eta^2  \varphi^2 \nonumber \\
&-C\iint_{Q_T} ({\lambda^2+1}) |\nabla  \varphi|^2.
\label{AB-21}
\end{align}

We continue to estimate $\langle A_2 \varphi, B_2\varphi \rangle$. Applying the integration by parts gives that
\begin{align}
\langle A_2\varphi, B_2\varphi \rangle &=-2\tau \lambda \iint_{Q_T} \eta  \nabla \varphi \cdot \nabla \xi \Delta \varphi\nonumber \\
&=-2\tau \lambda \iint_{\Sigma_T} \eta \frac{\partial \xi}{\partial \nu} | \frac{\partial \varphi}{\partial \nu}|^2 +
2\tau \lambda^2 \iint_{Q_T} \eta  |\nabla \xi\cdot \nabla \varphi|^2 \nonumber \\
&+2 \tau \lambda  \iint_{Q_T} \eta  D_i\varphi D_{ij}\xi D_j \varphi + 2 \tau \lambda  \iint_{Q_T} \eta  D_i\xi D_{ij}\varphi D_j \varphi \nonumber \\
&=S_1+S_2+S_3+S_4, \label{ss-1}
\end{align}
{where $\nu=\frac{\nabla \varphi}{|\nabla \varphi|}$ is the unit outer normal and $|\nabla \varphi| = |\partial \varphi/\partial \nu|$, due to $\varphi=0$ on $\Sigma_T$. Note that $\partial \xi /\partial \nu<0$ on $\Sigma_T$ because $\xi=0$ on $\Sigma_T$ and $\xi>0$ in $Q_T$. Thus $S_1 \geq 0$. Obviously, we also have $S_2 \geq 0$.}
For the term $S_3$,
we can estimate as follows,
\begin{align}
|S_3| \leq C\tau \lambda \iint_{Q_T}\eta  |\nabla \varphi|^2. \label{ss-2}
\end{align}
For the term $S_4$, the integration by parts gives that
\begin{align}
S_4 &= \tau\lambda \iint_{Q_T}  \eta \nabla \xi\cdot \nabla |\nabla \varphi|^2 \nonumber \\
& =\tau\lambda \iint_{\Sigma_T} \eta  |\nabla \varphi|^2\frac{\partial \xi}{\partial \nu}-\tau\lambda^2 \iint_{Q_T} \eta |\nabla \xi|^2 |\nabla \varphi|^2
-\tau\lambda \iint_{Q_T}\eta \Delta \xi |\nabla \varphi|^2 \nonumber \\
&=S_{41} + S_{42}+ S_{43}. \label{ss-3}
\end{align}
{Since $|\nabla \varphi| = |\partial \varphi/\partial \nu|$, we have
\begin{equation}
    S_1 + S_{41} = -\tau\lambda \iint_{\Sigma_T} \eta  |\nabla \varphi|^2\frac{\partial \xi}{\partial \nu} \geq 0.
\end{equation} }
{We will keep $S_{42}$} and estimate $S_{43}$ as
\begin{align}
|S_{43}| \leq C\tau\lambda \iint_{Q_T} \eta  |\nabla \varphi|^2. \label{ss-4}
\end{align}
The combination of (\ref{ss-1})-(\ref{ss-4}) gives
\begin{align}
\langle A_2\varphi, B_2\varphi \rangle & \geq -\tau\lambda^2 \iint_{Q_T} \eta |\nabla \xi|^2 |\nabla \varphi|^2-C\tau \lambda
 \iint_{Q_T} \eta |\nabla \varphi|^2.
 \label{AB-22}
\end{align}

Let us turn to the last term $\langle A_3\varphi, B_2\varphi \rangle$ in $\langle A\varphi, B_2\varphi \rangle$. 
{For a similar reason as before, we can see $|\nabla \varphi|\to 0$ as  $t\to 0$ or $t\to T$.}
Then 
\begin{align*}
\langle A_3\varphi, B_2\varphi \rangle =-\iint_{Q_T} \nabla \partial_t \varphi \cdot \nabla  \varphi \nonumber 
=-\frac{1}{2} \iint_{Q_T}  \partial_t |\nabla \varphi|^2 =0.
\end{align*}
Together with the previous estimates (\ref{AB-21}) and (\ref{AB-22}), we have
\begin{align}
\langle A\varphi, B_2\varphi \rangle &\geq {\tau\lambda^2}  \iint_{Q_T} \eta |\nabla \xi|^2 |\nabla \varphi|^2- C \tau^2 \lambda^4  \iint_{Q_T} \eta^2 | \varphi|^2 \nonumber \\
&-C \iint_{Q_T} (\lambda^2+ \tau\lambda \eta) |\nabla \varphi|^2
\label{innAB-2}
\end{align}
for $\lambda$ large and $\tau\geq CT^2$. From the property of $\xi$ that {$|\nabla \xi| \geq c>0$ in $Q_T \setminus \Omega'\times (0,T)$}, we can manipulate the right-hand side of \eqref{innAB-2} as
\begin{align}
\langle A\varphi, B_2\varphi \rangle& \geq \tau\lambda^2 \iint_{Q_T\backslash \Omega'\times (0, T)} \eta |\nabla \xi|^2 |\nabla \varphi|^2 +\tau\lambda^2 \iint_{ \Omega'\times (0, T)} \eta |\nabla \xi|^2 |\nabla \varphi|^2\nonumber \\ &- C \tau^2 \lambda^4  \iint_{Q_T} \eta^2 | \varphi|^2
-C \iint_{{Q_T}\backslash\Omega'\times (0, T)} (\lambda^2+ \tau\lambda \eta) |\nabla \varphi|^2 \nonumber \\ &-\iint_{ \Omega'\times (0, T) } C(\lambda^2+ \tau\lambda \eta) |\nabla \varphi|^2 \nonumber \\
&\geq {\frac12 c^2 \tau\lambda^2} \iint_{Q_T} \eta |\nabla \varphi|^2 - C\tau\lambda^2  \iint_{ \Omega'\times (0, T)}  \eta |\nabla \varphi|^2 \nonumber \\& -C\tau^2\lambda^4 \iint_{{Q_T}}  \eta^2 | \varphi|^2,
\label{innAB-2}
\end{align}
where we have used the fact $ \eta> C T^{-2}$ and $\tau>C' T^2$ (with sufficiently large $C'$) to guarantee $C(\lambda^2 + \tau \lambda \eta) \le \frac12 c^2 \tau \lambda^2 \eta$. This completes the estimate of $\langle A\varphi, B_2\varphi \rangle$.

Next we consider $\langle A\varphi, B_3\varphi \rangle  $. To this end, we first note that
\begin{align}
|\partial_t \beta| =\Big| \frac{(T-2t)}{t^2(T-t)^2} (e^{2\lambda s\|\xi\|_{L^\infty}}- e^{\lambda (s\|\xi\|_{L^\infty}+\xi)}) \Big| \leq CT \eta^2.
\label{beta-1}
\end{align}
For the first term $\langle A_1\varphi, B_3\varphi \rangle$, we have
\begin{align}
|\langle A_1\varphi, B_3\varphi \rangle| &\le 2 \tau^2\lambda^2 \iint_{Q_T} \eta |\nabla \xi|^2 \varphi^2 | \partial_t\beta| \nonumber \\
&\leq CT\tau^2\lambda^2 \iint_{Q_T} \eta^3  \varphi^2,
\label{AB3}
\end{align}
where we have used (\ref{beta-1}).
For the second term $\langle A_2\varphi, B_3\varphi \rangle$, applying the integration by parts, we have
\begin{align*}
\langle A_2\varphi, B_3\varphi \rangle&=- \tau^2\lambda \iint_{Q_T} \eta \nabla \xi \cdot\nabla \varphi^2 \partial_t\beta \nonumber \\
&= \tau^2\lambda^2 \iint_{Q_T} \eta |\nabla \xi|^2  \varphi^2 \partial_t\beta + \tau^2\lambda \iint_{Q_T} \eta \Delta \xi  \varphi^2 \partial_t\beta \nonumber \\
&+\tau^2\lambda  \iint_{Q_T} \eta \nabla \xi \cdot \nabla \partial_t \beta  \varphi^2.
\end{align*}
In view of (\ref{beta-1}), all the integrals in the lase equality are bounded by $C\tau^2\lambda^2 T  \iint_{Q_T} \eta^3  \varphi^2$.
Thus, we have
\begin{align}
|\langle A_2\varphi, B_3\varphi \rangle| \leq  C\tau^2\lambda^2 T  \iint_{Q_T} \eta^3  \varphi^2.
\end{align}
Next, we estimate the third term $\langle A_3 \varphi, B_3\varphi \rangle$ in $\langle A \varphi, B_3\varphi \rangle$ as
\begin{align}
|\langle A_3\varphi, B_3\varphi \rangle| &= \frac{1}{2}\tau \Big| \iint_{Q_T} \partial_t \beta \partial_t \varphi^2 \Big| \nonumber \\
&=\frac12 \tau \Big| \iint_{Q_T} \partial_{tt} \beta  \varphi^2 \Big| \nonumber \\
&\leq C\tau T^2 \iint_{Q_T}  \eta^3 \varphi^2,
\label{AB3-1}
\end{align}
where we used the fact $|\partial_{tt} \beta|\leq C\eta^3 T^2$ and $ \eta \varphi\to 0$ as $t\to 0$ or $t\to T$.

Thanks to the fact that $\lambda>C$ and $\tau>C(T+T^2)$, the combination of the estimates (\ref{AB3})-(\ref{AB3-1}) yields that
\begin{align}
\langle A\varphi, B_3\varphi \rangle \geq -C\tau^3\lambda^2 \iint_{Q_T}  \eta^3 \varphi^2.
\label{innAB-3}
\end{align}

Finally, we compute $\langle A\varphi, B_4\varphi \rangle$. The assumption $V(x,t)\in Z$ implies that $\| V\|_\infty, \|\nabla V\|_\infty$ and $\| \partial_t V\|_\infty$ are all bounded. We can simply estimate the first term in  $\langle A\varphi, B_4\varphi \rangle$ as
\begin{align}
|\langle A_1\varphi, B_4\varphi \rangle | \leq C\tau\lambda^2 \|V\|_{\infty}\iint_{{Q_T}} \eta \varphi^2.
\label{AB4-1}
\end{align}
For the second term $\langle A_2\varphi, B_4\varphi \rangle$,
the integration by parts shows that
\begin{align}
|\langle A_2\varphi, B_4\varphi \rangle | &= \tau \lambda \Big| \iint_{{Q_T}} \eta V \nabla \xi \cdot \nabla \varphi^2 \Big| \nonumber \\
&\le \tau \lambda^2 \Big| \iint_{{Q_T}} \eta V |\nabla \xi|^2  \varphi^2 \Big| +\tau \lambda \Big| \iint_{{Q_T}} \eta V \Delta \xi  \varphi^2 \Big|
+\tau \lambda \Big| \iint_{{Q_T}} \eta  \nabla \xi \cdot \nabla V  \varphi^2 \Big| \nonumber \\
&\leq C\tau \lambda^2 \|V\|_{\infty} \iint_{{Q_T}} \eta \varphi^2+ C\tau\lambda  \|\nabla V\|_{\infty}\iint_{{Q_T}} \eta \varphi^2,
\label{AB4-2}
\end{align}
where we used $\|\xi\|_{C^2(Q_T)}\leq C$. For the third term in $\langle A_3\varphi, B_4\varphi \rangle$, we bound it as
\begin{align}
|\langle A_3\varphi, B_4\varphi \rangle| &=\frac{1}{2} \Big| \iint_{{Q_T}} V \partial_t \varphi^2 \Big| \nonumber \\
&=\frac{1}{2} \Big| \iint_{{Q_T}} \partial_t V \varphi^2 \Big| \nonumber \\
&\leq C \|\partial_t V\|_{\infty} \iint_{{Q_T}} \varphi^2,
\label{AB4-3}
\end{align}
where we have used $\varphi \to 0$ as $t \to 0$ or $t\to T$.
Together with (\ref{AB4-1}) and (\ref{AB4-2}), and by $\eta \geq CT^{-2}$, we arrive at
\begin{align}
|\langle A\varphi, B_4\varphi \rangle | \leq C( \tau \lambda^2  \|V\|_{\infty}+\tau \lambda \|\nabla V\|_{\infty}+
T^2\|\partial_t V\|_{\infty})\iint_{Q_T} \eta \varphi^2.
\label{innAB-4}
\end{align}

We have assumed that  $\tau>C(T+T^2)$ in the previous estimates.
 Taking account of the estimates (\ref{innAB-1}), (\ref{innAB-2}), (\ref{innAB-3}), and  (\ref{innAB-4}) and choosing $\lambda, s$ large and
 \begin{align}
 \tau \geq C(T+T^2+T^2\|V\|^{\frac{1}{2}}_{\infty}+T^2\|\nabla V\|^{\frac{1}{2}}_{\infty}+T^2\|\partial_t V\|^{\frac{1}{3}}_{\infty}),
 \label{tau-a}
 \end{align}
we have
\begin{align*}
\langle A\varphi, B\varphi \rangle &\geq c\tau\lambda^2 \iint_{{Q_T}} \eta |\nabla \varphi|^2 +c\tau^3\lambda^4 \iint_{{Q_T}} \eta^3  \varphi^2 \nonumber \\
&-C \tau\lambda^2 \iint_{\Omega' \times (0, T)} \eta |\nabla \varphi|^2 - C \tau^3\lambda^4 \iint_{\Omega' \times (0, T)} \eta^3  \varphi^2.
\end{align*}
From the identity (\ref{equation-red}), we get that
\begin{align}
& \|A\varphi\|^2+ \|B \varphi\|^2+c \tau\lambda^2 \iint_{{Q_T}} \eta |\nabla \varphi|^2 +c \tau^3\lambda^4 \iint_{{Q_T}} \eta^3  \varphi^2 \nonumber \\
&\leq \|F_{\tau, \lambda}\|^2+
C\iint_{\Omega'\times (0, T)} \tau\lambda^2\eta |\nabla \varphi|^2 \nonumber +C\iint_{\Omega'\times (0, T)}\tau^3\lambda^4 \eta^3  \varphi^2 \nonumber \\
&\leq C \iint_{{Q_T}} e^{-2\tau \beta} f^2+ C\tau^2\lambda^4 \iint_{{Q_T}} \eta^2  \varphi^2 +
C \tau\lambda^2 \iint_{ \Omega'\times (0, T)} \eta |\nabla \varphi|^2 +\tau^3\lambda^4 \iint_{ \Omega'\times (0, T)} \eta^3  \varphi^2.
\end{align}
By assuming $\tau \geq CT^2$ such that $c\tau \eta \geq 2C$, the second integral on the right-hand side can be absorbed to the left. It follows that
\begin{align}
&\|A\varphi\|^2+ \|B \varphi\|^2+\iint_{{Q_T}} c\tau\lambda^2\eta |\nabla \varphi|^2 +c\tau^3\lambda^4 \eta^3  \varphi^2
\nonumber \\ 
& \qquad \leq  C\iint_{{Q_T}} e^{-2\tau \beta} f^2+ C\iint_{ \Omega'\times (0, T)} \tau\lambda^2\eta |\nabla \varphi|^2 + C\iint_{ \Omega'\times (0, T)}\tau^3\lambda^4 \eta^3  \varphi^2.
\label{inco-left}
\end{align}

Next, we want to incorporate $\iint_{ \Omega'\times (0, T)} \tau\lambda^2\eta |\nabla \varphi|^2$ into the left-hand side of (\ref{inco-left}). To this end, we need to add $|\Delta \varphi|$ to the Carleman estimates (\ref{inco-left}).
By the identity (\ref{identi-2}) and the triangle inequality, thanks to (\ref{beta-1}), we have
\begin{align}
c\tau^{-1} \iint_{Q_T} \eta^{-1} |\Delta \varphi|^2& \leq \tau^3 \lambda^4 \iint_{Q_T}  \eta^3  \varphi^2+ \tau T^2 \iint_{Q_T}  \eta^3 \varphi^2 \nonumber \\
&+\tau^{-1} \iint_{Q_T}  \eta^{-1} |V|^2 \varphi^2 +\tau^{-1}\|\eta^{-\frac{1}{2}} B \varphi\|^2 \nonumber \\
&\leq C\tau^3 \lambda^4 \iint_{Q_T}  \eta^3 \varphi^2+\| B \varphi\|^2,
\end{align}
where we have used the fact  $ \|V\|_{\infty}^2\eta^{-1}\tau^{-1}\leq \tau^3\eta^3$, due to the assumption of  $\tau$ in (\ref{tau-a}),  and $\eta^{-1}\leq CT^2$. Similarly, the identity (\ref{identi-1}) and the assumption of $\xi$ lead to
\begin{align}
c\tau^{-1} \iint_{Q_T} \eta^{-1} |\partial_t \varphi|^2\leq \tau \lambda^4 \iint_{Q_T} \eta \varphi^2+\tau\lambda^2 \iint_{Q_T} \eta |\nabla \varphi|^2+ \tau^{-1} \| \eta^{-\frac{1}{2}} A\varphi\|^2.
\end{align}
Therefore, from (\ref{inco-left}), we deduce that
\begin{align}
 & \iint_{Q_T}  \tau^{-1}\eta^{-1}(\partial_t \varphi+ |\Delta \varphi|)^2 +\tau\lambda^2 \iint_{Q_T}  \eta |\nabla \varphi|^2
 +\tau^3\lambda^4  \iint_{Q_T}  \eta^3  \varphi^2\nonumber \\
 &\qquad \leq C  \iint_{Q_T}  e^{-2\tau \beta} f^2+ \tau\lambda^2 \iint_{\Omega' \times (0, T)} \eta |\nabla \varphi|^2 +\tau^3\lambda^4 \iint_{\Omega' \times (0, T)} \eta^3  \varphi^2.
 \label{impor-cal}
\end{align}

We want to control the second integral  on the right-hand side in the last inequality. Recall that $ \Omega'\Subset\omega\Subset \Omega$. We introduce a cutoff function
\begin{align*}
0\leq \phi\leq 1, \quad \phi\in C_0^2(\omega),\quad \phi=1 \ \mbox{in} \ \Omega'.
\end{align*}
It is obvious that
\begin{align*}
\tau\lambda^2 \iint_{ \Omega'\times (0, T)} \eta |\nabla \varphi|^2\leq \tau\lambda^2 \iint_{ \omega\times (0, T)}\phi \eta |\nabla \varphi|^2.
\end{align*}
The applications of integration by parts show that
\begin{align*}
\iint_{ \omega\times (0, T)}\phi \eta |\nabla \varphi|^2&=- \iint_{\omega\times (0, T)}\eta \nabla \phi\cdot \nabla \varphi \varphi-\lambda  \iint_{\omega\times (0, T)}\phi \eta \nabla \xi \cdot \nabla \varphi \varphi
-\iint_{\omega\times (0, T)}\phi \eta \Delta \varphi \varphi \nonumber \\
&=\frac{1}{2} \iint_{\omega\times (0, T)}\nabla( \eta \nabla \phi) \varphi^2 +\frac{\lambda}{2} \iint_{\omega \times (0, T)}\nabla(\phi \eta \nabla \xi) \varphi^2-\iint_{\omega\times (0, T)}\phi \eta \Delta \varphi \varphi\nonumber \\
&\leq C\lambda^2 \iint_{\omega\times (0, T)}  \eta  \varphi^2+ \epsilon \tau^{-2}\lambda^{-2} \iint_{\omega\times (0, T)}  \eta^{-1} |\Delta \varphi|^2 \nonumber \\ &+C\epsilon^{-1} \tau^2\lambda^2 \iint_{\omega\times (0, T)}  \eta^3  \varphi^2,
\end{align*}
where we have used the Young's inequality in the last inequality with some small constant $\epsilon>0$ to be selected.
Therefore, we obtain
\begin{align*}
\tau\lambda^2 \iint_{ \Omega'\times (0, T)} \eta |\nabla \varphi|^2 &\leq \tau \lambda^4 \iint_{\omega\times (0, T)}  \eta  \varphi^2+ \epsilon \tau^{-1} \iint_{\omega\times (0, T)}  \eta^{-1} |\Delta \varphi|^2 \nonumber\\
&+C\epsilon^{-1} \tau^3\lambda^4 \iint_{\omega\times (0, T)}  \eta^3  \varphi^2 \nonumber\\
&\leq   C\epsilon^{-1} \tau^3\lambda^4 \iint_{\omega\times (0, T)}  \eta^3  \varphi^2 +\epsilon \tau^{-1} \iint_{\omega\times (0, T)}  \eta^{-1} |\Delta \varphi|^2,
\end{align*}
where we used the fact $\tau \eta>C$ again.
By selecting $\epsilon$ to be some small positive constant in the last inequality and take into consideration of  (\ref{impor-cal}), the integral $\epsilon \tau^{-1} \iint_{\omega\times (0, T) }  \eta^{-1} |\Delta \varphi|^2$ can be incorporated into the left-hand side of (\ref{impor-cal}). Then $\tau\lambda^2 \iint_{ \Omega'\times (0, T)} \eta |\nabla \varphi|^2$ can be absorbed.
Therefore, from (\ref{impor-cal}), we deduce that
\begin{align}
 \iint_{{Q_T}} \tau^{-1}\eta^{-1}(|\partial_t \varphi|+ |\Delta \varphi|)^2& +\tau\lambda^2 \iint_{{Q_T}} \eta |\nabla \varphi|^2
 +\tau^3\lambda^4  \iint_{{Q_T}} \eta^3  \varphi^2\nonumber \\
 &\leq C  \iint_{{Q_T}} e^{-2\tau \beta} f^2+ C \tau^3\lambda^4\iint_{ \omega \times (0, T)}  \eta^3  \varphi^2.
 \label{atlast-1}
\end{align}

It is remaining to return  to the Carleman estimate in terms of $w$. Recall that $\varphi= e^{-\tau \beta} w$.
Clearly, \eqref{atlast-1} yields
\begin{align}
\tau^3\lambda^4  \iint_{Q_T} e^{-2\tau \beta}  \eta^3  w^2\leq  C  \iint_{Q_T} e^{-2\tau \beta} f^2+ C\tau^3\lambda^4\iint_{\omega\times (0, T)}  e^{-2\tau \beta} \eta^3  w^2.
\label{final-1}
\end{align}
By \eqref{atlast-1} and
\begin{align*}
e^{-\tau \beta} \nabla w =\nabla \varphi -\tau \lambda e^{-\tau \beta} \eta\nabla \xi w,
\end{align*}
we have
\begin{align}
\tau \lambda^2 \iint_{Q_T} e^{-2\tau\beta} \eta |\nabla w|^2&\leq C  \iint_{Q_T} e^{-2\tau \beta} f^2+ C\tau^3\lambda^4\iint_{\omega\times (0, T)}  e^{-2\tau \beta} \eta^3  w^2.
 \label{atlast-2}
\end{align}
Thus, it follows from (\ref{atlast-1}) and (\ref{atlast-2}) that
\begin{align}
 \iint_{{Q_T}} \tau^{-1}\eta^{-1}(|\partial_t \varphi|+ |\Delta \varphi|)^2& +\tau\lambda^2 \iint_{{Q_T}} e^{-2\tau\beta} \eta |\nabla w|^2
 +\tau^3\lambda^4  \iint_{{Q_T}} e^{-2\tau\beta}\eta^3  w^2\nonumber \\
 &\leq C  \iint_{{Q_T}} e^{-2\tau \beta} f^2+ \tau^3\lambda^4\iint_{ \omega\times (0, T)}  e^{-2\tau\beta}\eta^3  w^2.
 \label{final-2}
\end{align}
Now notice that
\begin{align*}
\Delta \varphi= \tau^2  e^{-\tau\beta}|\nabla \beta|^2 w- \tau e^{-\tau\beta} \Delta \beta w-2\tau  e^{-\tau\beta} \nabla \beta\cdot\nabla w+ e^{-\tau\beta}\Delta w,
\end{align*}
and
\begin{equation*}
    \partial_t \varphi = -\tau e^{-\tau \beta} \partial_t \beta w + e^{-\tau \beta} \partial_t w.
\end{equation*}
Applying the triangle inequalities to the above identities and using \eqref{Dbeta} and \eqref{beta-1},
we get
\begin{align}
& \tau^{-1} \iint_{{Q_T}} e^{-2\tau\beta} \eta^{-1}( |\Delta w|^2 + |\partial_t w|^2) \nonumber \\
&\leq \tau^{-1} \iint_{{Q_T}} \eta^{-1} (|\Delta \varphi |^2 + |\partial_t \varphi|^2) +
C\tau^3 \lambda^4  \iint_{{Q_T}} e^{-2\tau\beta} \eta^3 w^2  \nonumber \\
&\qquad + C\tau \lambda^4  \iint_{{Q_T}} e^{-2\tau\beta} \eta w^2 +\tau \lambda^2 \iint_{{Q_T}} e^{-2\tau\beta} \eta |\nabla w|^2.
\label{final-3}
\end{align}
Taking the estimates (\ref{final-1}), (\ref{final-2}) and (\ref{final-3}) into consideration, we show that
\begin{align*}
\tau^3\lambda^4 \iint_{{Q_T}}e^{-2\tau \beta} \eta^3 w^2 \,dxdt&+ \tau\lambda^2 \iint_{Q}e^{-2\tau \beta} \eta |\nabla w|^2 \,dxdt
 \nonumber \\ &+\tau^{-1} \iint_{{Q_T}} e^{-2\tau \beta}\eta^{-1} (|\Delta w|^2+|\partial_t w|^2) \,dxdt  \nonumber \\
&\leq
C_1 \iint_{{Q_T}} e^{-2\tau \beta}f^2 \,dxdt+ C_1\tau^3\lambda^4 \iint_{\omega\times (0, T)}e^{-2\tau \beta} \eta^3 w^2 \,dxdt.
\end{align*}
This completes the proof of Lemma \ref{Carle-main}.
\end{proof}

Based on the Carleman estimates (\ref{Car-est}) in Lemma \ref{Carle-main}, we can show the observability inequality in Theorem \ref{th1}. For convenience, we let
\begin{equation}\label{defV}
   {\vertiii{V}  := \| V \|_\infty^{1/2} + \| \nabla V \|_{\infty}^{1/2} + \| \partial_t V \|_{\infty}^{1/3}.}
\end{equation}
\begin{proof}[Proof of Theorem \ref{th1}]
The proof should be standard. However, we present the details of the proof.
Since \eqref{adjoint} is the adjoint heat equation of \eqref{nonlinear-core},
to show (\ref{obser-lip}), it suffices to prove
\begin{align}
\|q(0)\|^2_{L^2(\Omega)}\leq e^{\hat{\tau}} \iint_{\omega\times (0, T)} q^2
\label{observability}
\end{align}
for the solution $q$ in (\ref{adjoint}),
where $\hat{\tau}=C(1+T^{-1}+T \|V\|_{\infty}+\vertiii{V})$.

Applying the Carleman estimates  (\ref{Car-est}) in Lemma \ref{Carle-main} to $w=q$ and using the equation $q_t+\Delta q - Vq = 0$, we have
\begin{align}
\iint_{Q_T}e^{-2\tau \beta} \eta^3  q^2 \leq C_1 \iint_{\omega\times (0, T)}e^{-2\tau \beta} \eta^3 q^2,
\label{carle-dedu}
\end{align}
for all $\tau\geq \tau_0= C(T+T^2+T^2 \vertiii{V})$. We choose $\tau=\tau_0$ in (\ref{carle-dedu}). Since $\eta \simeq \beta \simeq T^{-2}$ for $(x,t) \in \Omega \times (\frac{T}{3}, \frac{2T}{3})$, it is not difficult to see
\begin{equation}\label{exp-eta3}
    {e^{-2\tau \beta} \eta^3 \geq cT^{-6} \exp(-C(1+T^{-1} + \vertiii{V})) \qquad \text{in } \Omega \times (\frac{T}{3}, \frac{2T}{3}).}
\end{equation}
Next, we study the right-hand side of  (\ref{carle-dedu}). Since $\xi(x)>0$ in the open set $\omega$, we introduce $c_0:=\max_{\omega} \xi\leq \|\xi\|_{L^\infty}$. Furthermore, let $c_\ast:=e^{\lambda c_0}>1$, which is a fixed constant.  Then
\begin{align*}
e^{2\lambda s\|\xi\|_{L^\infty}}- e^{\lambda(s\|\xi\|_{L^\infty}+\xi(x))}&=e^{\lambda s\|\xi\|_{L^\infty}} ( e^{\lambda s\|\xi\|_{L^\infty}}- e^{\lambda \xi(x)} )\nonumber \\
&\geq c_\ast^s ( c_\ast^s -c_\ast  )
\nonumber \\
&=: c^\ast>0
\end{align*}
in the open set $\omega$ for some fixed constant $c^\ast$ and $s$ large.
Let $m=t(T-t)$. It is easy to see that $0\leq m\leq \frac14 T^2$. An elementary argument shows that $ e^{\frac{-2c^\ast \tau }{m} } m^{-3}$ is increasing if $m\leq \frac23 c^* \tau$. Thus, if we assume $\frac14 T^2 \le \frac23 c^* \tau$, we get
\begin{align}
e^{-2\tau \beta} \eta^3 &\leq C e^{\frac{-2c^\ast \tau }{m}} m^{-3} \le Ce^{ \frac{-8c^\ast \tau }{T^2}} (\frac14 T^2)^{-3} \nonumber \\
&\leq CT^{-6} \exp(-c(1+T^{-1}+ \vertiii{V} ))
\label{obser-12}
\end{align}
in $Q_T$ for some large constant $C$.
 Therefore, it follows from (\ref{carle-dedu}), (\ref{exp-eta3}) and \eqref{obser-12} that
\begin{align}
\iint_{\Omega\times (\frac{T}{3}, \frac{2T}{3})}  q^2 \leq \exp(C (1+T^{-1}+ \vertiii{V} )) \iint_{\omega\times (0, T)} q^2.
\label{observe-11}
\end{align}
By the dissipativity estimates (\ref{dissipa}) in Lemma \ref{lem-dissipa}, we have
\begin{align}
\|q(0)\|^2_{L^2(\Omega)}\leq CT^{-1} e^{CT \|V\|_{\infty} }\iint_{\Omega\times (\frac{T}{3}, \frac{2T}{3})}  q^2.
\label{observe-22}
\end{align}
Note that $CT^{-1} \le e^{1+CT^{-1}}$ for any $T>0$.
The combination of (\ref{observe-11}) and (\ref{observe-22}) gives that
\begin{align*}
\|q(0)\|^2_{L^2(\Omega)}\leq \exp(C(1+T^{-1}+T \|V\|_{\infty}+ \vertiii{V})) \iint_{\omega\times (0, T)} q^2.
\end{align*}
Thus, the conclusion in Theorem \ref{th1} is arrived.
\end{proof}
\section{ Null Controllability for linear heat equations }
A consequence of the observability inequality is the null controllability for the linear heat equation, following the so-called Hilbert Uniqueness Method introduced by J.L. Lions (see e.g., \cite{L88}). Since the proof now is standard, we refer to \cite[Theorem 2.44]{Cor07} and skip the proof.

\begin{proposition}\label{prop.HUM}
The observability inequality \eqref{observability} implies the null controllability of \eqref{linear-core-1} and the control function satisfies
\begin{align}\label{pro1}
\| h \|_{L^2({\omega\times (0, T)})}\leq \exp(C(1+T^{-1}+T \|V\|_{\infty}+ \vertiii{V})) \|y_0\|_{L^2(\Omega)},
\end{align}
where $\vertiii{V}$ is defined in \eqref{defV}.
\end{proposition}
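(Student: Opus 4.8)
The statement is a textbook instance of the Hilbert Uniqueness Method (HUM) of J.-L. Lions, and the plan is to carry it out in full while tracking constants, so that the observability constant $e^{\hat\tau}$ with $\hat\tau=C(1+T^{-1}+T\|V\|_{\infty}+\vertiii{V})$ from \eqref{observability} propagates directly into the control cost. First I would introduce the HUM functional: for $q_T\in L^2(\Omega)$, let $q$ be the (unique, by Lemma \ref{lemm-11} applied to the time-reversed equation) solution of the adjoint problem \eqref{adjoint}, and set
\begin{equation*}
J(q_T):=\frac12\iint_{\omega\times(0,T)}q^2\,dxdt+\int_\Omega y_0\,q(\cdot,0)\,dx .
\end{equation*}
This $J$ is continuous and strictly convex; the crucial point is coercivity, which is exactly where \eqref{observability} enters. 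Writing $a:=\big(\iint_{\omega\times(0,T)}q^2\big)^{1/2}$, the inequality \eqref{observability} gives $\|q(\cdot,0)\|_{L^2(\Omega)}\le e^{\hat\tau/2}a$, hence
\begin{equation*}
J(q_T)\ge \tfrac12 a^2-e^{\hat\tau/2}\|y_0\|_{L^2(\Omega)}\,a\longrightarrow +\infty\quad\text{as }a\to\infty .
\end{equation*}

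Since \eqref{observability}, together with backward uniqueness for the heat equation, shows that $q_T\mapsto a$ defines a norm on $L^2(\Omega)$, I would extend $J$ to the completion $H$ of $L^2(\Omega)$ under this norm; on $H$ it remains continuous, strictly convex and coercive, so it attains its minimum at a unique $\hat q_T\in H$, with associated adjoint trajectory $\hat q$ defined by density. The Euler--Lagrange equation at $\hat q_T$ reads
\begin{equation*}
\iint_{\omega\times(0,T)}\hat q\,q\,dxdt+\int_\Omega y_0\,q(\cdot,0)\,dx=0\qquad\text{for every }q_T ,
\end{equation*}
and I would take the control to be $h:=\mathds{1}_\omega\hat q\in L^2(\omega\times(0,T))$.

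To verify that this $h$ works, I would multiply \eqref{linear-core-1} by $q$, integrate over $Q_T$, and integrate by parts; using $-q_t-\Delta q+Vq=0$ and the homogeneous boundary conditions this produces the duality identity
\begin{equation*}
\int_\Omega y(\cdot,T)\,q_T\,dx-\int_\Omega y_0\,q(\cdot,0)\,dx=\iint_{\omega\times(0,T)}h\,q\,dxdt .
\end{equation*}
Combined with the Euler--Lagrange equation this gives $\int_\Omega y(\cdot,T)\,q_T=0$ for all $q_T\in L^2(\Omega)$, hence $y(\cdot,T)=0$, i.e.\ null controllability. The $S_T$-regularity from Lemma \ref{lemm-11} and the embedding \eqref{imbed} legitimize the integration by parts when $q_T\in L^2(\Omega)$; the case $\hat q_T\in H$ is then handled by approximation. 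This functional-analytic bookkeeping when the minimizer lies only in the abstract completion $H$ rather than in $L^2(\Omega)$ is the one genuinely delicate point, and it is carried out exactly as in \cite[Theorem~2.44]{Cor07}.

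Finally, for the cost bound I would use $J(\hat q_T)\le J(0)=0$: writing $\hat a:=\big(\iint_{\omega\times(0,T)}\hat q^2\big)^{1/2}$, this yields
\begin{equation*}
\tfrac12\hat a^2\le -\int_\Omega y_0\,\hat q(\cdot,0)\,dx\le \|y_0\|_{L^2(\Omega)}\,e^{\hat\tau/2}\,\hat a ,
\end{equation*}
so that $\|h\|_{L^2(\omega\times(0,T))}=\hat a\le 2e^{\hat\tau/2}\|y_0\|_{L^2(\Omega)}$. Recalling $\hat\tau=C(1+T^{-1}+T\|V\|_{\infty}+\vertiii{V})$ and absorbing the factor $2$ into the exponential gives \eqref{pro1}. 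I do not expect any obstacle beyond the care needed in the completion argument; everything else is the standard HUM computation.
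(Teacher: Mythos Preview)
Your proposal is correct and is exactly the standard HUM argument the paper invokes: the paper gives no proof of its own and simply refers to \cite[Theorem~2.44]{Cor07}, which is precisely the variational construction you carry out. Your tracking of the constant $e^{\hat\tau}$ through the coercivity and the inequality $J(\hat q_T)\le J(0)=0$ is accurate and yields \eqref{pro1} as stated.
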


{Now we are ready to prove the existence of a regular control function for the null controllability,  and estimate the norms of the control function and the trajectory  $y(x,t)$ }

Note that the control $h$ obtained by Proposition \ref{prop.HUM} is merely in $L^2$. In view of the Lipchitz regularity assumption on the potential, it is natural to ask for a more regular control. 
To this end, we borrow the idea from \cite{GP06} (also see \cite{FLM12}) which makes use of the parabolic regularizing effect in the case of Lipschitz potentials.

\begin{proof}[Proof of Theorem \ref{th2}]
Since $V$ is Lipschitz, the weak solution of (\ref{nonlinear-core}) satisfies the spatial $C^{2+\alpha}$ estimate for each time $t>0$. Precisely, by the energy estimate \eqref{lemm-11} and the classcial Schauder estimate for parabolic equations, for $0<t<\min\{T,1 \}$, we have
\begin{equation*}
    \|y(\cdot, t) \|_{C^{2+\alpha}(\overline{\Omega})}\leq C (t^{-1} + \| V \|_Z)^N  e^{Ct \|V\|_{\infty}} \|y_0\|_{L^2(\Omega)},
\end{equation*}
where $\| V \|_Z=\|V\|_\infty + \|\nabla V\|_\infty+ \|\partial_tV\|_\infty $ and $N$ (arising from rescaling and a bootstrap argument for regularity) is a constant depending only on $n$ and $\alpha$. 
{The proof of this estimate is similar to Lemma \ref{last-reg-lem} with careful attention to the dependence of $t$ and $\| V \|_Z$.}
By this parabolic regularizing effect, we may choose $t_0 = \min\{ \delta T, 1 \}$ with some small $\delta<1/3$ and consider $t_0$ as the initial time. In this case,
\begin{equation}
    \|y(\cdot, t_0) \|_{C^{2+\alpha}(\overline{\Omega})}\leq  \exp(C(1+T^{-1} + T \|V\|_{\infty} + \vertiii{V} )) \|y_0\|_{L^2(\Omega)},
\end{equation}
where we have used $T^{-1} \le \exp(T^{-1})$ and $\| V \|_Z \le \exp (1+T^{-1} + T\| V\|_\infty+ \vertiii{V}).$ Hence, without loss of generality,  we may assume $y_0 \in C^{2+\alpha}(\overline{\Omega})$ at $t = 0$ and construct a control function that vanishes in a short period of time at the beginning.

Let $\omega_2$ be a non-empty open set satisfying $\omega_2\Subset \omega$. Following the argument in Proposition \ref{pro1} and replacing $\omega$ in the proposition by $\omega_2$, we can show that there exist a control $\tilde{h}\in L^2(  \omega_2\times (0, T))$ and the associated solution $\tilde{y}$ satisfying $\tilde{y}(\cdot, T)=0$. Furthermore, it holds that
\begin{align}
\| \tilde{h}\|_{L^2(  \omega_2\times (0, T) )}
&\leq \exp( C(1+T^{-1}+T \|V\|_{L^\infty}+\vertiii{V} )) \|y_0\|_{L^2(\Omega)}.
\end{align}

With the aid of $\tilde{h}$ and $\tilde{y}$ at our disposal, we construct a control with the required regularity. Let $u$ be the solution of (\ref{linear-core-1}) with the control function $h=0$.  Due to the assumption $y_0\in C^{2+\alpha}(\bar\Omega)$, the Schauder estimates of heat equations imply that $u\in  C^{2+\alpha, 1+\frac{\alpha}{2}}(\bar Q_T)$; see e.g., \cite{L96}. Furthermore, the dependence of the constant on $T$ and $V$ is given precisely as follows:
\begin{align}
\|u\|_{C^{2+\alpha, 1+\frac{\alpha}{2}}(\bar Q_T)}\leq \exp({C(1 + T^{-1} + T \|V\|_{\infty} + \vertiii{V})}) \| y_0\|_{C^{2+\alpha}(\overline\Omega)}.
\label{uuu-1}
\end{align}
A proof of the above estimate is given in Lemma \ref{lemm-reg} in Appendix.


We choose $\chi=\chi(t)\in C^\infty[0, T]$ to be a function such that $\chi=1$ in the neighborhood of $t=0$ and $\chi=0$ in the neighborhood of $t=T$. Let $\hat{y}=\tilde{y}-\chi(t) u$. Then $\hat{y}$ satisfies the following equation
\begin{equation*}
    \left\{
    \begin{aligned}
        \hat{y}_t-\Delta \hat{y}+V(x,t)\hat{y} &= -\chi'(t) u+\tilde{h} \mathds{1}_{\omega_2} \quad   &\text{in} \ & Q_T, \\
        \hat{y} &=0  \quad &\text{on}& \ \Sigma_T, \\
       \hat{y}(\cdot, 0) &=0    &\text{on}& \  \Omega,
    \end{aligned}
    \right.
\end{equation*}
and $\hat{y}(x, T)=0$ in $\Omega$.

Next, we choose $\omega_3$ to be another open set such that $\omega_2\Subset \omega_3 \Subset\omega$. Then we have
 \begin{equation}
    \left\{
    \begin{aligned}
        \hat{y}_t-\Delta \hat{y}+V(x,t)\hat{y} &= -\chi'(t) u  \quad   &\text{in}& \ \Omega\backslash\omega_2 \times (0, T], \\
        \hat{y} &=0  \quad &\text{on}& \ \Sigma_T, \\
       \hat{y}(\cdot, 0) &=0    &\text{on}& \  \Omega.
    \end{aligned}
    \right.
    \label{hat-yy}
\end{equation}
By the fact that $\hat{y}=\tilde{y}-\chi(t) u$ and $u$ satisfies (\ref{uuu-1}), from Lemma \ref{last-reg-lem}, we obtain
\begin{align}
\|\hat{y}\|_{C^{2+\alpha, 1+\frac{\alpha}{2}}(\OO\times (0, T))}\leq  \exp({C(1 + T^{-1} + T \|V\|_{\infty} + \vertiii{V})})\|y_0\|_{C^{2+\alpha}(\Omega)}
\label{sch-hat-y}
\end{align}
for some open set $\OO \supset\partial\omega_3 $.
Applying the Schauder estimates for $\hat{y}$ in (\ref{hat-yy}) in $\Omega\backslash\omega_3 \times (0, T])$, together with (\ref{uuu-1}) and
 (\ref{sch-hat-y}), we have
 \begin{align}
 \|\hat{y}\|_{C^{2+\alpha, 1+\frac{\alpha}{2}}(  \bar \Omega\backslash\omega_3 \times (0, T])}\leq \exp({C(1 + T^{-1} + T \|V\|_{\infty} + \vertiii{V})}) \|y_0\|_{C^{2+\alpha}(\Omega)}.
 \label{sch-hat-y-2}
 \end{align}


Finally, we choose a cut-off function $\phi(x)\in C_0^\infty(\Omega)$ such that $0\leq \phi(x) \leq 1$ and $\phi=1$ in $\omega_4$ and $\phi = 0$ in $\Omega \setminus \omega$, where
 $\omega_3 \Subset \omega_4 \Subset \omega$.
  Then we select $y^\ast=(1-\phi) \hat{y}$. Taking into account the regularity of $\hat{y}$ and the assumption of $\phi$, we have $y^\ast\in C^{2+\alpha, 1+\frac{\alpha}{2}}(\bar Q_T)$. Let $y=y^\ast+\chi(t) u$. Note that $y(x, T)=0$ in $\Omega$. One has
\begin{equation}\label{eq.Ly=h}
    \left\{
    \begin{aligned}
        {y}_t-\Delta {y}+V(x,t){y} &= (1-\phi)\tilde{h} \mathds{1}_{\omega_2} -\phi \chi'(t) u+\Delta \phi \hat{y}+2 \nabla \phi\cdot \nabla \hat{y}=h\mathds{1}_{\omega}  \quad   &\text{in} \ & Q_T, \\
        {y} &=0  \quad &\text{on}& \ \Sigma_T, \\
       {y}(0, \cdot) &=y_0    &\text{on}& \  \Omega,
    \end{aligned}
    \right.
\end{equation}
where {$h(x,t)=-\phi \chi'(t) u+\Delta \phi \hat{y}+2 \nabla \phi\cdot \nabla \hat{y}$}
since $(1-\phi)\tilde{h} \mathds{1}_{\omega_2}=0$. By the regularity estimates of $\phi, \hat{y}$ and  $u$, we have
$h\in C^{\alpha, \frac{\alpha}{2}}(\bar \omega\times [0, T])$. Moreover, from the construction of $h(x, t)$, we know that
$h(x,t)$ is supported in  $\omega$ for every $t \in (0, T)$ and $h(x, 0)=0$ in $\Omega$.

The regularity estimates of $\hat{y}$ in (\ref{sch-hat-y-2}) and $u$ in (\ref{uuu-1}) further imply
\begin{align*}
\|h\|_{C^{\alpha, \frac{\alpha}{2}}( Q_T)  } = \|h\|_{C^{\alpha, \frac{\alpha}{2}}( \bar\omega \times [0, T])  } \leq  \exp({C(1 + T^{-1} + T \|V\|_{\infty} + \vertiii{V})}) \|y_0\|_{C^{2+\alpha}(\Omega)},
\end{align*}
and thus a global Schauder estimate for \eqref{eq.Ly=h}
\begin{align*}
\begin{aligned}
    \|y\|_{C^{2+\alpha, 1+\frac{\alpha}{2}}(\bar Q_T)  } & \leq \exp({C(1 + T^{-1} + T \|V\|_{\infty} + \vertiii{V})}) ( \| y_0 \|_{C^{2+\alpha}(\Omega)} + \| h \|_{C^{\alpha, \frac{\alpha}{2}}( Q_T)} ) \\
    & \leq  \exp({C(1 + T^{-1} + T \|V\|_{\infty} + \vertiii{V})}) \|y_0\|_{C^{2+\alpha}(\Omega)}.
\end{aligned}
\end{align*}
Therefore, we have completed the proof of Theorem \ref{th2}.
\end{proof}

\section{One-dimensional heat equation}
In this section, we show the optimal  observability inequalities for the one-dimensional heat equation. The observation region is concerned with a subset of positive measure in $\Omega$ and a subset of positive measure in $(0, T)$. This requires a well-known technique involving
the following lemma, which is based on the property of density points for sets of positive measure on $(0, T)$; see e.g., \cite{PW13}.
\begin{lemma}
Let $E$ be a measurable subset of positive measure in $(0, T)$ and $k$ be a density point of $E$. Then for any $\gamma>1$, there exists $k_1\in (k, T)$ such that the sequence defined by
\begin{align*}
k_{m+1}-k= \gamma^{-m}(k_1-k)
\end{align*}
satisfies
\begin{align*}
|E\cap (k_{m+1}, k_m)|\geq \frac{ (k_m-k_{m+1})}{3}.
\end{align*}
\label{densi}
\end{lemma}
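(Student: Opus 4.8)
The plan is to unwind the definition of a density point, turn the desired lower bound on $|E\cap(k_{m+1},k_m)|$ into a single algebraic smallness condition on an auxiliary parameter $\eps$ (depending only on $\gamma$), and then choose $k_1$ small enough that the density estimate applies simultaneously on all the dyadic-type intervals $(k_{m+1},k_m)$.

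First I would recall that $k$ being a density point of $E$ means $\lim_{r\to 0^+}\frac{|E\cap(k,k+r)|}{r}=1$. Fix $\gamma>1$ and pick $\eps>0$ with $\eps\le\frac{2(\gamma-1)}{3\gamma}$; by the density property there is $\rho>0$ such that $|E\cap(k,k+r)|\ge(1-\eps)\,r$ for every $r\in(0,\rho)$. Then choose any $k_1\in(k,T)$ with $k_1-k<\rho$ and define $k_{m+1}-k=\gamma^{-m}(k_1-k)$; writing $\ell:=k_1-k$, the sequence $(k_m)$ decreases monotonically to $k$ because $\gamma>1$, so $k_{m+1}<k_m$ and every interval $(k_{m+1},k_m)$ is contained in $(k,k_1)\subset(k,k+\rho)$, where the density estimate is valid. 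Now I would simply compute: $(k_{m+1},k_m)$ has length $k_m-k_{m+1}=(\gamma^{-(m-1)}-\gamma^{-m})\ell=\gamma^{-m}(\gamma-1)\ell$, while $|E\cap(k,k_m)|\ge(1-\eps)(k_m-k)=(1-\eps)\gamma^{-(m-1)}\ell$ and $|E\cap(k,k_{m+1})|\le k_{m+1}-k=\gamma^{-m}\ell$. Subtracting these two bounds gives
\[
|E\cap(k_{m+1},k_m)|\ \ge\ \big((1-\eps)\gamma-1\big)\gamma^{-m}\ell,
\]
and it remains only to verify $(1-\eps)\gamma-1\ge\tfrac13(\gamma-1)$, which is exactly the inequality $\eps\le\frac{2(\gamma-1)}{3\gamma}$ we imposed; hence $|E\cap(k_{m+1},k_m)|\ge\tfrac13(k_m-k_{m+1})$ for all $m\ge1$, as claimed.

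There is no genuinely hard step here; the only points that require care are organizational: one must fix $\eps$ as a function of $\gamma$ \emph{before} extracting $\rho$ and choosing $k_1$, so that the final algebraic inequality closes, and one must observe that because $k_m\downarrow k$ monotonically, the bound $k_1-k<\rho$ is enough to guarantee that the linear density estimate $|E\cap(k,k+r)|\ge(1-\eps)r$ holds on \emph{every} relevant scale $r=k_m-k$ at once. With these choices made, the estimate is immediate.
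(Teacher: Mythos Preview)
Your argument is correct. The paper does not actually prove this lemma; it merely states it and refers to \cite{PW13}. What you wrote is the standard density-point argument: choose $\eps=\eps(\gamma)$ first, use one-sided density at $k$ to find $\rho$, pick $k_1-k<\rho$, and then subtract the lower bound for $|E\cap(k,k_m)|$ from the trivial upper bound for $|E\cap(k,k_{m+1})|$. The algebra closes exactly under your constraint $\eps\le\tfrac{2(\gamma-1)}{3\gamma}$, and the monotone decrease $k_m\downarrow k$ ensures all scales $k_m-k$ lie in $(0,\rho)$ simultaneously. One minor remark: you invoke the one-sided density condition $\lim_{r\to 0^+}\tfrac{|E\cap(k,k+r)|}{r}=1$, which is an immediate consequence of the usual two-sided Lebesgue density (since $|E\cap(k,k+r)|\ge |E\cap(k-r,k+r)|-r$); this is harmless but worth stating explicitly if the reader expects the symmetric definition.
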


We also need the following results for the propagation of smallness results for holomorphic functions, whose proof relies on a Remez-type inequality; see \cite{Ma04} or \cite[Proposition 2.3]{Zhu23}.  
Let $\mathcal{E}$ be a measurable set on the line segment $(-\frac{1}{2}, \frac{1}{2})$ in the two dimensional ball $B_{1/2}$ with $|\mathcal{E}|>0$, where $|\mathcal{E}|$ is the Lebesgue measure on the line.
 \begin{lemma}
 Let $h(z)$ be a holomorphic function in $B_4\subset \mathbb C$. Then there exists a constant $0<\alpha<1$ depending on $|\mathcal{E}|$ such that
\begin{align}
  \sup_{B_1}|h|\leq  \sup_{\mathcal{E}}|h|^{\alpha} \sup_{B_4}|h|^{1-\alpha},
  \label{three-hol}
\end{align}
where $\alpha=\frac{1}{1+C+C\log\frac{C}{|\mathcal{E}|}}$ and $C>1$ is a universal constant.
 \end{lemma}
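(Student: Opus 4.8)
The plan is to run the classical route for Remez-type propagation-of-smallness inequalities: peel off the zeros of $h$ to isolate a polynomial factor of controlled degree, dispose of the zero-free remaining factor with Harnack's inequality, and then apply the classical Remez inequality for polynomials on an interval. After normalizing so that $\sup_{B_4}|h| = 1$ (the case $h\equiv 0$ being trivial), set $\mu := \sup_{B_1}|h| \in (0,1]$ and $N := \log(1/\mu) = \log\bigl(\sup_{B_4}|h|/\sup_{B_1}|h|\bigr)\ge 0$, a doubling-type index. Un-normalizing, \eqref{three-hol} is equivalent to the lower bound $\sup_{\mathcal{E}}|h| \ge \mu^{\,1+C+C\log(C/|\mathcal{E}|)}$, and this is what I would prove.

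The first step is to bound the number of zeros of $h$ near $B_1$. Picking $a\in\overline{B_{1/2}}$ with $|h(a)| = \sup_{B_{1/2}}|h|$, which by the Hadamard three-circle theorem satisfies $|h(a)|\ge \mu^{3/2}$, and applying Jensen's formula on a disc centered at $a$ and contained in $B_4$, one finds that $h$ has at most $m\le CN$ zeros (counted with multiplicity) in a fixed disc $B_\rho$ with $[-\tfrac12,\tfrac12]\cup\overline{B_1}\subset B_\rho \Subset B_4$. Factoring these out — using finite Blaschke products for $B_\rho$ so that the factors stay of controlled size — write $h = Q\,g$ on $B_\rho$, where $Q$ is a polynomial of degree $m\le CN$ carrying these zeros and $g$ is holomorphic and zero-free on $B_\rho$, with $\sup_{B_\rho}|g|\lesssim 1$ and $\sup_{B_1}|g|\ge e^{-CN}$. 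Since $g$ is zero-free, $\log|g|$ is harmonic on $B_\rho$, so Harnack's inequality applied to the nonnegative harmonic function $\log\bigl(\sup_{B_\rho}|g|\bigr)-\log|g|$ yields $\osc_{B_1}\log|g|\le CN$; in particular $|g|\ge e^{-CN}\sup_{B_1}|g|$ everywhere on $B_1\supset[-\tfrac12,\tfrac12]\supset\mathcal{E}$.

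Next I would apply the classical Remez inequality to the polynomial $Q$ on the interval $[-\tfrac12,\tfrac12]$ and its subset $\mathcal{E}$ with $\epsilon := |\mathcal{E}|$: there is a point $x_0\in\mathcal{E}$ with $|Q(x_0)|\ge(\epsilon/C_0)^m\sup_{[-1/2,1/2]}|Q|$, and the Bernstein--Walsh lemma (via the Green function of $\mathbb{C}\setminus[-\tfrac12,\tfrac12]$) turns this into $|Q(x_0)|\ge(\epsilon/C_0)^m e^{-CN}\sup_{B_1}|Q|$. Combining with the Harnack bound at $x_0$ and using $\sup_{B_1}|Q|\,\sup_{B_1}|g|\ge\sup_{B_1}|Qg| = \sup_{B_1}|h| = \mu$ together with $m\le CN$, one obtains
\[
\sup_{\mathcal{E}}|h| \ \ge\ |Q(x_0)|\,|g(x_0)| \ \ge\ \Bigl(\tfrac{\epsilon}{C_0}\Bigr)^{m} e^{-CN}\,\sup_{B_1}|Q|\,\sup_{B_1}|g| \ \ge\ \Bigl(\tfrac{\epsilon}{C_0}\Bigr)^{CN} e^{-CN}\,\mu .
\]
Since $e^{-N} = \mu$, the right-hand side equals $\mu^{\,1+C+C\log(C_0/\epsilon)}$, which is the required bound, and un-normalizing gives \eqref{three-hol} with $\alpha = (1+C+C\log(C_0/\epsilon))^{-1}$.

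The step I expect to be the main obstacle is the zero-counting and factorization: one must choose the auxiliary radii so that Jensen's formula on a disc fitting inside $B_4$ still captures all the zeros lying over the segment $[-\tfrac12,\tfrac12]$, while the reference value $|h(a)|$ stays bounded below by a power of $\mu$ (this is precisely where the three-circle theorem enters), and one must check that every loss incurred along the way — the degree of $Q$, the oscillation of $\log|g|$, the Bernstein--Walsh constant — is of the harmless size $e^{O(N)}$ rather than worse. A clean structural point is that only the polynomial factor $Q$ feels the mismatch between the disc $B_1$ and the segment $[-\tfrac12,\tfrac12]$, and for polynomials this is handled exactly by Bernstein--Walsh; the zero-free factor $g$ is controlled uniformly on $B_1$ by Harnack. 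When $\mu$ is close to $1$ (so $N$ is small) the argument degenerates: there are essentially no zeros, $g\approx h$, and the classical Remez inequality applies almost directly. As the statement is precisely the Remez-type estimate of \cite{Ma04,Zhu23}, one may alternatively invoke those references.
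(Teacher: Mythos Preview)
The paper does not supply its own proof of this lemma, deferring instead to \cite{Ma04} and \cite[Proposition 2.3]{Zhu23}; your sketch is a correct reconstruction of precisely the Remez-type argument carried out in those references (zero-counting via Jensen plus the three-circle theorem, Harnack on the zero-free factor, and the Remez/Bernstein--Walsh inequalities on the polynomial factor). There is therefore nothing to compare against beyond noting that your proposal and the paper's citations point to the same method, and your closing remark that one may simply invoke those references is exactly what the paper does.
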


For our application, we would like to replace the $\sup_{\mathcal{E}}|h|$ in the inequality (\ref{three-hol}) by the norm $\|h\|_{L^2(\mathcal{E})}$. To this end, we set 
\begin{align*}
    \mathcal{E}'_1=\{x\in \mathcal{E}| h(x)\geq \frac{4}{ |\mathcal{E}|^{1/2} }\|h\|_{L^2(\mathcal{E})}\}.
\end{align*}
We claim that $|\mathcal{E}'_1|\leq \frac{1}{2}|\mathcal{E}|$. Otherwise, if $|\mathcal{E}'_1|> \frac{1}{2}|\mathcal{E}|$, then
\begin{align*}
   \|h\|_{L^2(\mathcal{E}'_1)} \geq \frac{4}{ {|\mathcal{E}|}^{1/2}} \|h\|_{L^2(\mathcal{E})} (\frac12 |\mathcal{E}| )^{1/2}\geq 2\|h\|_{L^2(\mathcal{E})}.
\end{align*}
Obviously, it is a contradiction. This shows the claim. We introduce 
\begin{align*}
   \mathcal{E}_1^0=\mathcal{E}\backslash \mathcal{E}'_1=\{x\in \mathcal{E}| h(x) < \frac{4}{|\mathcal{E}|^{1/2}} \|h\|_{L^2(\mathcal{E})}\}.
\end{align*}
Then $|\mathcal{E}_1^0|\geq \frac{1}{2} |\mathcal{E}|$. Applying the inequality (\ref{three-hol}) with $\mathcal{E}$ replaced by $\mathcal{E}_1^0$, we get
\begin{align}
  \sup_{B_1}|h|&\leq  (\frac{4}{|\mathcal{E}|^{1/2}})^\alpha \|h\|_{L^2(\mathcal{E})}^{\alpha} \sup_{B_{4}}|h |^{1-\alpha} \nonumber\\
  &\leq C\|h\|_{L^2(\mathcal{E})}^{\alpha}\sup_{B_{4}}|h|^{1-\alpha}.
  \label{three-measure-1}
\end{align}

To show the observability inequalities,  we will first obtain some spectral inequalities for a linear combination of eigenfunctions,  which might be of independent interest. We consider the eigenvalue problem for the one-dimensional Schr\"odinger operator (or Hill operator) 
\begin{align}
\left\{
\begin{aligned}
     -\partial_{xx}\phi_k +V(x)\phi_k & =\lambda_k \phi_k  \quad & \mbox{in} \ \Omega, \nonumber\\
 \phi_k(x) & =0 \quad & \mbox{on} \ \partial\Omega,
\end{aligned}
\right.
\end{align}
where $\Omega=(0, \frac{1}{2})$, the eigenvalues $\{\lambda_k: k=1,2,\cdots \}$ are ordered nondecreasingly, counted with multiplicity. Note that the smallest eigenvalue $\lambda_1$ is larger than $\inf_{\Omega} V$. Moreover, the eigenfunctions $\{ \phi_k: k =1,2,\cdots \}$ are orthonormal in $L^2(\Omega)$. Let $-\tilde{\Delta}=-\partial_{xx}+V(x)$ and $P_\lambda=P_\lambda(-\tilde{\Delta}): L^2(\Omega) \to L^2(\Omega)$ be the projection onto the eigenspace generated by $\{\phi_k: \lambda_k\leq \lambda\}$. Without loss of generality, we assume that $\|V\|_{\infty}= M$ for some large constant $M>3$. 
By some ideas in \cite{KSW15}, we can show the following sharp spectral inequalities for the one-dimensional Schr\"odinger equation. 


\begin{lemma}
Let $\omega$ be a measurable subset of positive measure on $\Omega$. There exists a constant $C$ depending on $\omega$ such that 
\begin{align}
\| \phi\|_{L^2(\Omega)}\leq  e^{C( M^{1/2}+{\lambda}^{1/2})} \| \phi\|_{L^2(\omega)} \quad \text{for all } \phi\in Ran(P_\lambda(-\tilde{\Delta})).
\label{spec-in-3}
\end{align}
\end{lemma}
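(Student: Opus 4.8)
The plan is to establish the spectral inequality \eqref{spec-in-3} by a complexification argument, transforming the linear combination of eigenfunctions of $-\partial_{xx}+V$ into a function associated with a nonnegative potential on the line, where the techniques adapted from \cite{KSW15} for Landis' conjecture apply. First I would reduce to the case of a nonnegative potential: write $\widetilde{V} = V - \inf_\Omega V + 1 \geq 1$ (so $\|\widetilde V\|_\infty \le 2M+1 \lesssim M$), and observe that $\phi \in \mathrm{Ran}(P_\lambda(-\widetilde\Delta))$ satisfies $-\phi'' + V\phi = \sum_k c_k \lambda_k \phi_k$ with $\lambda_k \le \lambda$; equivalently, setting $\Lambda = \lambda - \inf_\Omega V + 1$, the function $\phi$ is a finite linear combination of eigenfunctions of $-\partial_{xx} + \widetilde V$ with eigenvalues $\widetilde\lambda_k \le \Lambda$, where $\Lambda \lesssim M + \lambda$ and $\widetilde\lambda_k \ge 1$. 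Since $e^{C(M^{1/2} + \Lambda^{1/2})} \lesssim e^{C(M^{1/2} + \lambda^{1/2})}$, proving \eqref{spec-in-3} for a nonnegative potential of size $\lesssim M$ with the spectral parameter $\Lambda$ in place of $\lambda$ suffices. From now on assume $V \ge 1$, all $\lambda_k \in [1, \Lambda]$, and set $\mu := \max\{M^{1/2}, \Lambda^{1/2}\}$.

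Next I would construct a lift to two dimensions. Following \cite{KSW15}, introduce $u(x,y) := \sum_k c_k \phi_k(x) \cosh(\sqrt{\lambda_k}\, y)$, which is well-defined for $x \in \overline\Omega$ and $y \in \mathbb{R}$ since the sum is finite. Then $u(x,0) = \phi(x)$, $\partial_y u(x,0) = 0$, and a direct computation gives $-\partial_{xx} u - \partial_{yy} u + V(x) u = \sum_k c_k(\lambda_k \phi_k)\cosh(\sqrt{\lambda_k} y) - \sum_k c_k \lambda_k \phi_k \cosh(\sqrt{\lambda_k} y) = 0$; that is, $u$ solves the two-dimensional Schr\"odinger equation $\Delta u = V(x) u$ with the \emph{nonnegative} potential $V(x) \ge 1$ independent of $y$, on a strip $\Omega \times \mathbb{R}$ (one extends $u$ by odd reflection across $x=0$ and $x = 1/2$, or works directly with the Dirichlet boundary and interior estimates). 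Because the potential is nonnegative, the ideas of \cite{KSW15} — quantitative unique continuation with the sharp dependence $e^{C\|V\|_\infty^{1/2}}$ rather than the generic $e^{C\|V\|_\infty^{2/3}}$ — become available. The point of the growth control is the elementary bound $\|u(\cdot, y)\|_{L^2(\Omega)} \le e^{\sqrt{\Lambda}\,|y|}\|\phi\|_{L^2(\Omega)}$ (from $\cosh(\sqrt{\lambda_k}y) \le e^{\sqrt\Lambda |y|}$ and orthonormality), so $u$ has controlled exponential growth at rate $\sqrt\Lambda \le \mu$ in the strip.

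Then I would combine a three-ball / propagation-of-smallness inequality for solutions of $\Delta u = Vu$ with $V \ge 0$ — where the doubling/quantitative UCP constants scale like $e^{C\mu}$ rather than $e^{C\mu^{4/3}}$, which is exactly the gain coming from nonnegativity of $V$ (cf.\ \cite{KSW15}) — with the growth bound above, to transfer smallness of $u$ on a small region near $\{y = 0\}$ over $\omega$ to a lower bound on $\|\phi\|_{L^2(\Omega)} = \|u(\cdot,0)\|_{L^2(\Omega)}$. Concretely: cover $\overline\Omega \times \{0\}$ by finitely many balls, chain them via the three-ball inequality starting from a ball centered in $\omega \times \{0\}$, use interior elliptic estimates to pass between $\sup$, $L^2(\omega)$, and $L^2$ norms on parallel slices, and use the growth bound $\|u(\cdot,y)\|_{L^2} \le e^{\mu|y|}\|\phi\|_{L^2}$ to absorb the $\sup_{B_R}|u|$ terms at the cost of a factor $e^{C\mu}$. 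For the measurable set $\omega$ of positive measure (not open), I would also invoke the propagation-of-smallness for holomorphic functions — the inequality \eqref{three-measure-1} applied to $z \mapsto u(z, 0)$, which is entire in $x$ once $V$ is assumed real-analytic (or handled via an approximation/density argument in the general $L^\infty$ case as in \cite{KSW15}) — to replace $L^2(\omega)$ over the positive-measure set by an $L^2$ norm over a small interval, at the cost of raising to a power $\alpha \in (0,1)$ depending only on $|\omega|$. Tracking all exponents, one finds $\|\phi\|_{L^2(\Omega)} \le e^{C(M^{1/2} + \Lambda^{1/2})}\|\phi\|_{L^2(\omega)}$, and undoing the reduction gives \eqref{spec-in-3}.

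The main obstacle I expect is controlling the $V$-dependence with the sharp exponent $1/2$ throughout the chaining: naively, a three-ball inequality for $\Delta u = Vu$ with bounded measurable $V$ only yields $e^{C\|V\|_\infty^{2/3}}$ (equivalently $e^{C\mu^{4/3}}$ here), so it is essential to exploit the nonnegativity of the lifted potential at every step — this is precisely what forces the reduction to $\widetilde V \ge 1$ and the use of the \cite{KSW15}-type arguments rather than generic Carleman estimates. A secondary technical point is handling the merely-measurable potential $V \in L^\infty(\Omega)$ (as opposed to analytic), where the complexification in $x$ is not literally available; here one either approximates $V$ by analytic potentials with a uniform-in-approximation spectral inequality, or one argues directly on the real strip with quantitative unique continuation and then invokes \eqref{three-measure-1} only for the final reduction from the open chained interval to the measurable set $\omega$, after $\phi$ has been shown to satisfy a quantitative analyticity/Bernstein-type estimate inherited from its ODE $-\phi'' = (\lambda_k - V)\phi_k$ on each eigencomponent. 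Making these two reductions precise and bookkeeping the constants is where the real work lies; the geometric chaining itself is routine.
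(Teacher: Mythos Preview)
Your skeleton matches the paper: shift to $V\ge0$, lift to $u(x,y)=\sum c_k\phi_k(x)\cosh(\sqrt{\lambda_k}\,y)$ solving $-\Delta u+Vu=0$ on a strip, invoke the KSW15 three-ball inequality with its sharp $e^{C\sqrt M}$ constant, chain balls along $\{y=0\}$, and control growth in $y$ by $e^{\sqrt\Lambda|y|}$ via orthonormality.

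The genuine gap is in your treatment of the measurable set $\omega$. You propose to apply the Remez inequality \eqref{three-measure-1} to a holomorphic extension of $\phi=u(\cdot,0)$ itself, obtained either from real-analyticity of $V$ or by approximation. Neither route works: for $V\in L^\infty$ the eigenfunctions are only $C^{1,1}$ and admit no holomorphic extension; for analytic $V$ the extension bounds depend on the analyticity radius of $V$, not on $\|V\|_\infty$, so you would not recover the $e^{C\sqrt M}$ constant; and \cite{KSW15} contains no approximation argument of the kind you describe. The holomorphic object has to be manufactured on the two-dimensional side, and this is precisely the KSW15 mechanism that the paper follows. One builds a positive multiplier $w(x)$ solving $-w''+Vw=0$ with $e^{-\sqrt M}\le w\le e^{\sqrt M}$ (this is where $V\ge0$ is used), so that $v=u/w$ satisfies $\mathrm{div}(w^2\nabla v)=0$; the associated stream function $\tilde v$ then makes $g=w^2v+i\tilde v$ a solution of $\bar\partial g=\tilde q_0 g$ with $\|\tilde q_0\|_\infty\le C\sqrt M$, hence $g=e^{W}h$ with $h$ holomorphic and $\|W\|_\infty\le C\sqrt M$. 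The key observation is that $\partial_y u(\cdot,0)=0$ forces $\tilde v\equiv0$ on the real segment, so there $g(x,0)=w(x)\phi(x)$, and the Remez inequality \eqref{three-measure-1} applied to the genuinely holomorphic $h$ on that segment controls $\phi$ by $\|\phi\|_{L^2(\omega_1)}$ with the correct $e^{C\sqrt M}$ factor. This multiplier/stream-function/$\bar\partial$-factorization step is the missing idea in your proposal; once it is in place, your chaining and growth-control argument finish the proof exactly as you outlined.
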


\begin{proof}
 We first deal with the nonnegative potential $V(x)$.
As $\Omega=(0,\frac{1}{2})$ and $\phi_k(0) = 0$, we extend $\phi_k$ by an odd reflection to be in $(-\frac{1}{2}, \frac{1}{2})$ across $x=0$. Then we do an even extension for $V(x)$ across $x=0$ in $(-\frac{1}{2}, \frac{1}{2})$. We still write the extended functions as $\phi_k$ and $V(x)$. Hence we have
\begin{align}
\left\{
\begin{aligned}
     -\partial_{xx}\phi_k +V(x)\phi_k & =\lambda_k \phi_k  \quad  & \mbox{in} \ \ {\Omega}_1:=(-\frac{1}{2}, \frac{1}{2}), \nonumber\\
 \phi_k(x) & =0 \quad & \mbox{on} \ \partial{\Omega}_1.
\end{aligned}
\right.
\end{align}
We can further extend $\phi_k$ as a periodic function in $\mathbb R$ with period $1$. In particular, $\phi_k$ satisfies the above equation in $\tilde{\Omega}:=(-1, 1)$ and vanishes on $\partial \tilde{\Omega}$. Note that $\phi_k\in C^{1, 1}(\tilde{\Omega})$.

Since $V(x)\geq 0$, then $\lambda_1>\inf_\Omega V \geq 0$. For any $\phi \in Ran(P_\lambda(-\tilde{\Delta}))$, we can write $\phi=\sum_{0<\lambda_k\leq \lambda} \alpha_k \phi_k$ with $\alpha_k=(\phi_k, \phi)$. We construct 
\begin{align} u(x, y)=\sum_{0<\lambda_k\leq \lambda} \alpha_k \cosh(\sqrt{\lambda_k} y)\phi_k(x).
\label{uuu1}\end{align}
Note that $\frac{\partial u}{\partial y}=0$ on $\{(x,y) \in \tilde{\Omega}\times \mathbb R| y=0\}$. It is easy to see that
\begin{align}
-\Delta u+V(x)u=0 \quad \mbox{in} \ \tilde{\Omega}\times (-1, 1),
\label{www-3}
\end{align}
where we used the usual notation $\Delta =\partial_{xx} +\partial_{yy}$.

Next, we construct a multiplier to convert \eqref{www-3} into a second order elliptic equation without lower order terms.  We  first prove the existence of a positive solution of the following equation. 
\begin{align}
\left\{
\begin{aligned}
     -\partial_{xx}w +V(x) w & =0  \quad & \mbox{in} \ \tilde{\Omega}, \\
 w & = e^{\sqrt{M}} \quad & \mbox{on} \ \partial\tilde{\Omega}.
\end{aligned}
\right.
\label{build}
\end{align}
Choose $\hat{w}_1= e^{\sqrt{M}x}$. We can check that $-\partial_{xx}\hat{w}_1 +V(x) \hat{w}_1 \leq 0$. Thus, $\hat{w}_1$ is a subsolution of (\ref{build}). Let $\hat{w}_2= e^{\sqrt{M}}$. Then $-\partial_{xx}\hat{w}_2 +V(x) \hat{w}_2 \geq 0$ as $V(x) \geq 0$. Therefore, $\hat{w}_2$ is a supersolution of (\ref{build}). By the sub-supersolution method, there exists a positive solution $w$ of (\ref{build}) such that $\hat{w}_1\leq w\leq \hat{w}_2$. Therefore, we have 
\begin{align} e^{-\sqrt{M} }\leq w(x)\leq e^{\sqrt{M} }.
\label{www-1}\end{align} 
Using \eqref{www-1} and the equation \eqref{build}, we can get the estimate of $\partial_{xx} w$ and hence
\begin{align} \| w\|_{C^{1,1}(\tilde{\Omega})} \leq e^{C\sqrt{M} }.
\label{www-2}\end{align}

Now, observe that $w = w(x)$ given above can also be viewed as a positive solution of 
\begin{align*}
 -\Delta w+V(x) w=0    \quad \mbox{in} \ \tilde{\Omega}\times (-1, 1).
\end{align*}
We consider $v(x,y)=\frac{u(x, y)}{w(x)}$. Then $v(x,y)$ satisfies 
\begin{align}
    {\rm div}(w^2 \nabla v)=0 \quad \mbox{in} \ \tilde{\Omega}\times (-1, 1).
\end{align}
We will use some techniques from complex analysis, which have been applied in \cite{KSW15} in the study of Landis' conjecture in the plane. We construct a stream function $\tilde{v}$ associated with $v$ in $B_1$. That is, $\tilde{v}$ satisfies 
\begin{align}
\left\{
\begin{aligned}
     \partial_y \tilde{v}  & = w^2 \partial_x {v}, \\
-\partial_x \tilde{v}  & = w^2 \partial_y {v}.
\end{aligned}
\right.
\label{anti-sy}
\end{align}
We can choose $\tilde{v}(0)=0$. Let $g=w^2 v+i \tilde{v}.$ Then 
\begin{align}
    \overline{\partial} g=\frac{\overline{\partial} w^2}{2w^2}(g+\bar g)    \quad \mbox{in} \ B_1, \label{complex-1}
\end{align}
where $\overline{\partial}=\frac{1}{2}({\partial x}+ i{\partial y})$.
Hence we can write it as
\begin{align*}
    \overline{\partial} g= q_0 g+ q_0 \bar g,
\end{align*}
where $q_0=\frac{\overline{\partial} w^2}{2w^2}= \overline{\partial} \log (w)$. Furthermore,
we can reduce it to
\begin{align}
   \overline{\partial} g=\tilde{q}_0 g   \quad \mbox{in}  \ B_1,  \label{complex-2}
\end{align}
where
\begin{align*}
\tilde{q}_0= \left\{ 
\begin{array}{lll}
{q}_0+ \frac{{q}_0 \bar g}{g}, \quad & \mbox{if} \ g\not=0, \nonumber \\
0,  \quad & \mbox{otherwise}.
\end{array}
\right.
\end{align*}
It was shown in \cite[Lemma 2.2]{KSW15} that $\|\tilde{q}_0\|_{L^\infty(B_1)}\leq 2\| q_0\|_{L^\infty(B_1)} \leq C\sqrt{M}$.
Furthermore, it can be shown that, see e.g. Theorem 4.1 in \cite{Bo09},
\begin{align}g(z)=e^{W(z)}h(z) \quad \mbox{in} \ B_{4/5},
\label{valid-1}
\end{align}
where $h$ is a holomorphic function in $B_{4/5}$,
\begin{align*}
    W(z)=-\frac{1}{\pi}\int_{B_{4/5}}\frac{\tilde{q}_0}{\xi-z} d\xi,
\end{align*}
and  $z=x+iy$. Moreover, the bound of $\tilde{q}_0$ leads to $\|W\|_{L^\infty (B_{4/5})}\leq C\sqrt{M} $.

Next we apply the propagation of smallness in (\ref{three-measure-1}). Let $\omega_1$ be a measurable subset on the line $B_{r/2}\cap\{(x,y)|y=0\}$ with  $|\omega_1|>0$.  We identify the ball in the complex plane with the ball in $\mathbb R^2$. Replacing $\mathcal{E}$ by $\omega_1$ and letting $h=e^{-W(z)}g(z)$ in \eqref{three-measure-1}, we have
\begin{align}
  \sup_{B_r}|e^{-W(z)}g(z)|\leq C \|e^{-W(z)}g(z)\|^{\alpha}_{L^2(\omega_1)} \sup_{B_{4r}}|e^{-W(z)}g(z)|^{1-\alpha}   
  \label{hada-1}
\end{align}
for some $r<r_0\leq \frac{1}{10}$, where $r_0$ is some universal constant.
Thanks to the expression of $g$ and the conditions for $W(z)$, we get
\begin{align}
  \sup_{B_r}|v|\leq e^{C\sqrt{M}} \|w^2 v+i\tilde{v}\|^{\alpha}_{L^2(\omega_1)} \sup_{B_{4r}}|w^2 v+i\tilde{v} |^{1-\alpha}.   
  \label{hada-2}
\end{align}
Since $\frac{\partial u}{\partial y}=0$ on $\{(x,y)\in B_1|y=0\}$, then $\frac{\partial v}{\partial y}=0$ {on $\{(x,y)\in B_1|y=0\}$} as $w(x)$ depends only on $x$. From (\ref{anti-sy}), it follows that  $\frac{\partial \tilde{v}}{\partial x}=0$ on $\{(x,y)\in B_1|y=0\}$. As $\tilde{v}(0)=0$, then $\tilde{v}=0$ on $\{(x,y)\in B_1|y=0\}$.
It holds that $\nabla v=\nabla (\frac{u}{w})=\frac{\nabla u w-\nabla w u}{w^2}.$
We can show that 
\begin{align}
  \sup_{B_{4r}}|\tilde{v} |   &\leq C r e^{C\sqrt{M}} \sup_{B_{4r}} |\nabla v| \nonumber \\
  & \leq e^{C\sqrt{M}} \sup_{B_{5r}} | u|,
  \label{hada-3}
\end{align}
where we used the estimates (\ref{www-1}),  (\ref{www-2}), \eqref{anti-sy}, and the elliptic estimates for $u$ in (\ref{www-3}).
Therefore, we obtain that
\begin{align}
  \sup_{B_r}|u|\leq e^{C\sqrt{M}}  \|u\|^{\alpha}_{L^2(\omega_1)} \sup_{B_{5r}}|u |^{1-\alpha}.    
  \label{three-measure}
\end{align}

Recall that $u(x,y)$ is given in (\ref{uuu1}).
The orthogonality of ${\phi}_k$ in $L^2(\widetilde{\Omega})$ leads to
\begin{equation}\label{est.orthogonal}
    \int_{\widetilde{\Omega} \times (-1,1)} |u(x,y)|^2 dxdy \le \exp( C(1+ \lambda^{1/2} )) \int_{\widetilde{\Omega}} |u(x,0)|^2 dx,
\end{equation}
 Let $\bar{x} \in {\Omega}$ be such that
\begin{equation*}
    M_0 := |u(\bar x, 0)|=\sup_{\frac{1}{2}\widetilde{\Omega}\times \{0\}} |u|=\sup_{{\Omega}} |\phi|,
\end{equation*}
and $\omega$ be the given measurable
subset of positive measure in $\Omega$.
Then there exists a point $x_\ast \in \omega$ such that $|B_r(x_\ast, 0)\cap \omega|>c|\omega|$ for $\frac12 r_0 \leq r\leq r_0$ and some small $c$. Let $\omega_1=B_r(x_\ast, 0)\cap \omega$. 
We apply (\ref{three-measure}) at $(x_\ast, 0)$ with  $\frac12 r_0 \leq r\leq r_0$,
\begin{equation}\label{Linfty.xstar}
\begin{aligned}
   \|u\|_{L^\infty(B_{2r}(x_\ast, 0))} & \leq  \exp(C M^{1/2}) \|\phi\|^{\alpha}_{L^2(\omega_1)} \|u\|^{1-\alpha}_{L^\infty(B_{10r}(x_\ast, 0))} \\
   & \le \exp(C M^{1/2})\|\phi\|^{\alpha}_{L^2(\omega)} \|u\|^{1-\alpha}_{L^2(\widetilde{\Omega} \times (-1,1))} \\
   & \le \exp( C(\lambda^{1/2} + M^{1/2} )) \|\phi\|^{\alpha}_{L^2(\omega)} \|u(\cdot,0)\|^{1-\alpha}_{L^2(\widetilde{\Omega} )} \\
   & \le \exp( C(\lambda^{1/2} +M^{1/2} )) \|\phi\|^{\alpha}_{L^2(\omega)} M_0^{1-\alpha},
\end{aligned}
\end{equation}
where we have used the $L^\infty$ regularity for the elliptic equation \eqref{www-3} in the second inequality, and \eqref{est.orthogonal} in the third inequality.

Now if we can show
\begin{equation}\label{est.M0}
    M_0 \le \exp(C(M^{1/2} + \lambda^{1/2})) \|u\|_{L^\infty(B_{2r}(x_\ast, 0))},
\end{equation}
then \eqref{Linfty.xstar} implies
\begin{equation}
    M_0 \le \exp( C(\lambda^{1/2} +M^{1/2} )) \|\phi\|_{L^2(\omega)},
\end{equation}
which leads to \eqref{spec-in-3}. Thus, it suffices to show \eqref{est.M0}. Indeed, this follows from the three-ball inequality and a standard argument.


In fact, as $ V\geq 0$, by the proof of \cite[Theorem 2.5]{KSW15}, we have the following three-ball inequality: for any $z = (x,y) \in \widetilde{\Omega} \times (-\frac23,\frac23)$ and $r<r_0$, it holds that
\begin{align}
\|u\|_{L^\infty(B_{2r}(z))}\leq  \exp{(C M^{1/2})} \|u\|^{1-\alpha_1}_{L^\infty(B_{r}(z))}\|u\|^{\alpha_1}_{L^\infty(B_{3r}(z))},
\label{three-kenig}
\end{align}
where $0<\alpha_1<1$ and $r_0>0$ are some universal constants. Then for any $z_1$ with $|z_1 - z|\leq r$, we have $B_r(z_1) \subset B_{2r}(z)$ and hence by \eqref{three-kenig}
\begin{equation}
    \|u\|_{L^\infty(B_{r}(z_1))} \le \exp(CM^{1/2}) \|u\|^{1-\alpha_1}_{L^\infty(B_{r}(z))} M_0^{\alpha_1}.
\end{equation}

Let $x_0=x_\ast, x_1, \cdots,  x_{m-1}, x_m=\bar x$ be a finite number of points such that $|x_{i+1} - x_i| < r \in (\frac12 r_0, r_0)$, where the number $m$ depends only on $\tilde{\Omega}$ and $r_0$. At each point $(x_i, 0)$, we apply the three-ball inequality (\ref{three-kenig}) repeatedly with $z = (x_i,0)$ and $z_1 = (x_{i+1},0)$ and obtain 
\begin{align}
\|u\|_{L^\infty(B_{r}(\bar{x},0))} \leq \exp(C_m (M^{1/2}+\lambda^{1/2})) \|u\|_{L^\infty(B_{r}(x_*,0))}^{(1-\alpha_1)^m} M_0^{1-(1-\alpha_1)^m}.
\label{simple-1}
\end{align}
By definition, the left-hand side of \eqref{simple-1} is equal to $M_0$. Hence, simplifying the last inequality, we obtain \eqref{est.M0}. The proof of (\ref{spec-in-3} ) is complete for nonneagive potential $V(x)$.


 Now we deal with the general case $V(x)$ that changes signs. The eigenvalues $\lambda_k$ are discrete and approach infinity as $k
\to \infty$. Notice that there may be finitely many of negative eigenvalues and the smallest eigenvalue  $\lambda_1$ is larger than  $-M$.  
We choose $\tilde{V}(x)=V(x)+M$. Then 
\begin{align}
 -\partial^2_x \phi_k+\tilde{V}(x) \phi_k=\tilde{\lambda}_k\phi_k,  
\end{align}
where $\tilde{\lambda}_k=\lambda_k+M\geq 0$, $\tilde{V}(x)\geq 0$ and $\|\tilde{V}\|_{L^\infty}\leq 2M$.
Let $\phi=\sum_{-M\leq \lambda_k\leq \lambda} \alpha_k \phi_k$ with $\alpha_k=(\phi_k, \phi)$.   We construct 
\begin{align} u(x, y)=\sum_{-M<\lambda_k\leq \lambda} \alpha_k \cosh(\sqrt{\tilde{\lambda}_k }y)\phi_k(x).
\label{uuu1}\end{align}
We still have $\frac{\partial u}{\partial y}=0$ on $\{(x,y) \in \tilde{\Omega}\times \mathbb R| y=0\}$, and 
\begin{align}
-\Delta u+\tilde{V}(x)u=0 \quad \mbox{in} \ \tilde{\Omega}\times (-1, 1).
\label{www-3-6}
\end{align}
Thus, we can apply the results for the nonnegative potential in the previous calculations to (\ref{www-3-6}). We have 
\begin{align}
\| \phi\|_{L^2(\Omega)}\leq  e^{C( (2M)^{1/2}+{(\lambda+M)}^{1/2})} \| \phi\|_{L^2(\omega)}.
\end{align}
Hence the spectral inequalities (\ref{spec-in-3}) is arrived for sign changing potential $V(x)$. Therefore, the lemma is completed.

\end{proof}

Next we deduce the observability inequalities in Theorem \ref{th3} from the spectral inequalities (\ref{spec-in-3}) following the proof in \cite{AEWZ14}, which in turn relies on the ideas in \cite{M10}, {interpolation inequalities} (see, e.g., (\ref{est.et2Delta}) below)
and telescoping series method in \cite{PW13}. Moreover, we explicitly show the dependence of the norm $\|V\|_\infty$ in the arguments.

\begin{proof}[Proof of Theorem \ref{th3}] 

We first consider the case for nonnegative potential $V \ge 0$. Let $P^\lambda=I-P_\lambda$ and $\|V\|_{\infty}=M$. We introduce the semigroup $e^{t\tilde{\Delta}}$ generated by $\tilde{\Delta} = \Delta - V$, which can be defined as $e^{t\tilde{\Delta}} f=\sum a_i e^{-\lambda_i t} \phi_i(x)$ for any $f\in L^2(\Omega)$, where $a_i=(f,\phi_i)$. Note that
\begin{equation}\label{est.semigroup}
    \| e^{t\tilde{\Delta}} f \|_{L^2(\Omega)}^2 = \sum_{i} |a_i|^2 e^{-2\lambda_i t}.
\end{equation}

Let $t>s\geq 0$ and $\lambda>0$. Thanks to (\ref{spec-in-3}),
we have
\begin{align}
\| e^{t\tilde{\Delta}} f\|_{L^2(\Omega)}&\leq \| e^{t\tilde{\Delta}} P_\lambda f\|_{L^2(\Omega)}+ \| e^{t\tilde{\Delta}} P^\lambda f\|_{L^2(\Omega)} \nonumber \\
&\leq C e^{C({M^{1/2}}+{\lambda}^{1/2})}  \| e^{t\tilde{\Delta}} P_\lambda  f \|_{L^2(\omega)} + \| e^{t\tilde{\Delta}} P^\lambda f\|_{L^2(\Omega)}\nonumber \\
&\leq Ce^{C(M^{1/2}+{\lambda}^{1/2})} ( \| e^{t\tilde{\Delta}}  f\|_{L^2(\omega)}+ \|e^{t\tilde{\Delta}} P^\lambda  f\|_{L^2(\Omega)} ) + \| e^{t\tilde{\Delta}} P^\lambda f \|_{L^2(\Omega)}  \nonumber \\
&\leq C_1 e^{C(M^{1/2}+{\lambda}^{1/2})} ( \| e^{t\tilde{\Delta}}  f\|_{L^2(\omega)}+ \|e^{t\tilde{\Delta}} P^\lambda  f\|_{L^2(\Omega)} )\nonumber \\
&\leq C_1 e^{C(M^{1/2}+{\lambda}^{1/2})} ( \| e^{t\tilde{\Delta}}  f\|_{L^2(\omega)}+ e^{-\lambda(t-s)} \|e^{s\tilde{\Delta}} P^\lambda  f\|_{L^2(\Omega)} ),
\label{how-long}
\end{align}
where we used \eqref{est.semigroup} in the last inequality.
Let $\delta \in (0,1)$, to be determined later. It is obvious that
\begin{align}
\sup_{\lambda \geq 0}  e^{C\sqrt{\lambda}- \delta \lambda(t-s)} = e^{ \frac{C^2}{4\delta(t-s)}}=e^{ \frac{K}{\delta(t-s)}}
\end{align}
for some fixed constant $K = C^2/4>0$. 
Then we have
\begin{align*}
\| e^{t\tilde{\Delta}} f\|_{L^2(\Omega)}&\leq C_1 e^{CM^{1/2}}  e^{ \frac{K}{\delta(t-s)}} (  e^{\delta \lambda (t-s)} \| e^{t\tilde{\Delta}}  f\|_{L^2(\omega)}+ e^{-(1-\delta)\lambda(t-s)} \|e^{s\tilde{\Delta}}   f\|_{L^2(\Omega)}).
\end{align*}
We minimize the right-hand side of the last inequality.
Since
\begin{align*}
\| e^{t\tilde{\Delta}} f\|_{L^2(\omega)}\leq \| e^{t\tilde{\Delta}} f\|_{L^2(\Omega)}\leq e^{-\lambda_1(t-s)} \| e^{s\tilde{\Delta}} f\|_{L^2(\Omega)}\leq  \| e^{s\tilde{\Delta}} f\|_{L^2(\Omega)}
\end{align*}
as $\lambda_1\geq 0$ and $t>s$,
we can choose  $\lambda$, which  is some positive free parameter, such that
\begin{align}
 e^{\lambda(t-s)}=\frac{ \| e^{s\tilde{\Delta}} f\|_{L^2(\Omega)}} {  \| e^{t\tilde{\Delta}} f\|_{L^2(\omega)}}.
\end{align}
Then we have
\begin{align}
\| e^{t\tilde{\Delta}} f\|_{L^2(\Omega)}\leq 2 C_1  e^{CM^{1/2}}  e^{ \frac{K}{\delta(t-s)}} \| e^{t\tilde{\Delta}} f\|^{1-\delta}_{L^2(\omega)}  \| e^{s\tilde{\Delta}} f\|^{\delta}_{L^2(\Omega)}.
\end{align}
We will consider $t\in (t_1, t_2)$ for any $t_1<t_2$. Choosing $s=t_1$, we get from \eqref{reg-first} with $V_- = 0$ that
\begin{align}
\| e^{t_2\tilde{\Delta}} f\|_{L^2(\Omega)}&\leq  \| e^{t\tilde{\Delta}} f\|_{L^2(\Omega)} \nonumber \\
&\leq 2 C_1  e^{C M^{1/2}}  e^{ \frac{K}{\delta(t-t_1)}} \| e^{t\tilde{\Delta}} f\|^{1-\delta}_{L^2(\omega)}  \| e^{t_1\tilde{\Delta}} f\|^{\delta}_{L^2(\Omega)}.\label{est.et2Delta}
\end{align}
Let $\gamma=2$, $t_1=k_{m+1}$ and $t_2=k_m$. From Lemma \ref{densi}, we get that  $|E\cap  (t_1,  t_2)|\geq \frac{t_2-t_1}{3}$. Then it holds that
\begin{align}
 |E\cap  (t_1+\frac{t_2-t_1}{4}, t_2)|\geq \frac{t_2-t_1}{12}.
 \end{align}
Integrating \eqref{est.et2Delta} over $t\in E\cap (t_1+\frac{t_2-t_1}{4}, t_2)$ and using the H\"older's inequality yield
\begin{align}
& \| e^{t_2\tilde{\Delta}} f\|_{L^2(\Omega)} \nonumber \\
& \leq 2 C_1  e^{C M^{1/2}}  e^{ \frac{4K}{\delta(t_2-t_1)}}
\bigg\{ \int^{t_2}_{t_1+\frac{t_2-t_1}{4}}\mathds{1}_E(t) \| e^{t\tilde{\Delta}} f\|_{L^2(\omega)}\, dt \bigg\}^{{1-\delta}}  \| e^{t_1\tilde{\Delta}} f\|^{\delta}_{L^2(\Omega)}.
\end{align}
Let $a$ be some positive constant to be determined. By the Young's inequality $AB\leq (1-\delta)A^{\frac{1}{1-\delta}}+\delta B^{\frac{1}{\delta}}$, we get
\begin{align}
& e^{-\frac{a}{t_2-t_1}} \| e^{t_2\tilde{\Delta}} f\|_{L^2(\Omega)} \nonumber \\
&\leq C_1  e^{C M^{1/2}} 
\bigg\{ \int^{t_2}_{t_1+\frac{t_2-t_1}{4}} \mathds{1}_E(t)\| e^{t\tilde{\Delta}} f\|_{L^2(\omega)}\, dt \bigg\}^{{1-\delta}}   e^{ \frac{4K/\delta-a}{(t_2-t_1)}} \| e^{t_1\tilde{\Delta}} f\|^{\delta}_{L^2(\Omega)}\nonumber \\
&\leq  C_1 (1-\delta) e^{{\frac{1}{1-\delta}}CM^{1/2}} 
\bigg\{ \int^{t_2}_{t_1+\frac{t_2-t_1}{4}}\mathds{1}_E(t) \| e^{t\tilde{\Delta}} f\|_{L^2(\omega)}\, dt \bigg\} \nonumber \\ 
 & \qquad  + \delta e^{ \frac{4K/\delta-a}{\delta(t_2-t_1)}}  \| e^{t_1\tilde{\Delta}} f\|_{L^2(\Omega)}.
\end{align}
Now we choose the constants $a$ and $\delta$ such that
\begin{align}
 \frac{a- 4K/\delta}{\delta}>2a.
 \label{aaa-1}
\end{align}
Hence, we have
\begin{align}
e^{-\frac{a}{t_2-t_1}} \| e^{t_2\tilde{\Delta}} f\|_{L^2(\Omega)}&\leq C_1 e^{CM^{1/2}} 
\int^{t_2}_{t_1+\frac{t_2-t_1}{4}}  \mathds{1}_E(t) \| e^{t\tilde{\Delta}} f\|_{L^2(\omega)}\, dt \nonumber \\& \qquad +e^{ \frac{-2a }{(t_2-t_1)}}\| e^{t_1\tilde{\Delta}} f\|_{L^2(\Omega)}.
\end{align}
It follows that
\begin{align}
e^{-\frac{a}{t_2-t_1}} \| e^{t_2\tilde{\Delta}} f\|_{L^2(\Omega)}&-e^{ \frac{-2a }{(t_2-t_1)}}\| e^{t_1\tilde{\Delta}} f\|_{L^2(\Omega)} \nonumber \\&\leq C_1 e^{C M^{1/2}} 
\int^{t_2}_{t_1+\frac{t_2-t_1}{4}} \mathds{1}_E(t) \| e^{t\tilde{\Delta}} f\|_{L^2(\omega)}\, dt.
\label{545-1}
\end{align}
 Recall that we have chosen $\gamma=2$, $t_1=k_{m+1}$ and $t_2=k_{m}$. Observe that in Lemma \ref{densi}
\begin{align}
\label{kkk-1}
\frac{1}{k_{m+1}-k_{m+2}}=\frac{2}{k_{m}-k_{m+1}}.
\end{align}
We have
\begin{align}
e^{-\frac{a}{k_{m}-k_{m+1}}} \| e^{k_m \tilde{\Delta}} f\|_{L^2(\Omega)}&-e^{ \frac{-a }{k_{m+1}-k_{m+2}}}\| e^{t_{k_{m+1}}\tilde{\Delta}} f\|_{L^2(\Omega)} \nonumber \\ & \leq C_1 e^{C M^{1/2}}
\int^{k_m}_{k_{m+1}} \mathds{1}_E(t)\| e^{t\tilde{\Delta}} f\|_{L^2(\omega)}\, dt.
\end{align}
Note that $ e^{-\frac{a}{k_{m}-k_{m+1}}}\to 0 $ as $m\to \infty$. Summing up the above telescopic series from $m=1$ to $\infty$ gives that
\begin{align}
e^{-\frac{a}{k_{1}-k_{2}}} \| e^{k_1 \tilde{\Delta}} f\|_{L^2(\Omega)}\leq  C_1 e^{CM^{1/2}}\int^{k_1}_{k} \mathds{1}_E(t)\| e^{t\tilde{\Delta}} f\|_{L^2(\omega)}\, dt.
\label{more-1}
\end{align}
Therefore, by the fact that $k_1 - k_2$ is a positive constant depending on $E$, we have
\begin{align}
\| e^{T\tilde{\Delta}} f\|_{L^2(\Omega)}\leq   e^{C(E) + C M^{1/2}}
\int_{E} \| e^{t\tilde{\Delta}} f\|_{L^2(\omega)}\, dt,
\end{align}
where we also used \eqref{reg-first}. Note that in the above inequality, $C$ depends on $\omega$ and $\Omega$, while $C(E)$ depends additionally on $E$.
Let $f=y_0$ in (\ref{nonlinear-one}). We arrive at
\begin{align}
\| y(\cdot, T)\|_{L^2(\Omega)}\leq  e^{C(E) + CM^{1/2}}
\bigg(\int_E \int_\omega y^2 \, dx dt \bigg)^\frac{1}{2}.
\label{mama-b}
\end{align}
Hence, we complete the proof of Case 1 in Theorem \ref{th3}.

 For the sign-changing potential $V(x)$ with $\|V\|_\infty=M$, we can reduce to the nonnegative potential in Case 1. Let $\hat{y}(x,t)= e^{-\|V_-\|_\infty t} y(x,t)$. Then $\hat{y}(x,t)$ satisfies 
\begin{align}
  \hat{y}_t- \partial_{xx}\hat{y}+(\|V_-\|_\infty+V(x)) \hat{y}=0.
\end{align}
Since $\|V_-\|_\infty+V(x)\geq 0$,  from (\ref{mama-b}), 
\begin{align}
\| \hat{y}(\cdot, T)\|_{L^2(\Omega)}\leq  e^{C(E) + CM^{1/2}}
\bigg(\int_E \int_\omega \hat{y}^2 \, dx dt \bigg)^\frac{1}{2}.
\end{align}
The definition of $\hat{y}$ implies that
\begin{align}
\| {y}(\cdot, T)\|_{L^2(\Omega)}\leq  e^{C(E) + T\| V_- \|_\infty+CM^{1/2}}
\bigg(\int_E \int_\omega {y}^2 \, dx dt \bigg)^\frac{1}{2}.
\label{end-1}
\end{align}
Hence, we complete the proof of Theorem \ref{th3}.
\end{proof}

\begin{proof}[Proof of Corollary \ref{cor1}] Following from the arguments in the proof of Theorem \ref{th3}, we check the term
$e^{-\frac{a}{k_{1}-k_{2}}}$ in (\ref{more-1}).
If $E=(0, T)$, we set $k_{m+1}= 2^{-m} T$ and hence $k_1 -k_2 = \frac12 T$. Also recall that $a$ in (\ref{more-1})  is an absolute constant  depending on  $\Omega$ and $\omega$. Thus, \eqref{obs-0T-V} follows. 
\end{proof}

\section*{Appendix}
\renewcommand{\theequation}{A.\arabic{equation}}
\setcounter{equation}{0}
\renewcommand{\thetheorem}{A.\arabic{theorem}}
\setcounter{theorem}{0}

In Appendix we provide two lemmas on the regularity estimates for heat equations with emphasis on how the constants depend on the potential $V$. Note that $\| V \|_Z=\|V\|_\infty + \|\nabla V\|_\infty+ \|\partial_tV\|_\infty $.


\begin{lemma}
Let $y_0\in C^{2+\alpha}(\overline{\Omega})$ with some $\alpha \in (0,1)$ and $V \in Z$. Let $y$ be the solution of (\ref{nonlinear-core}). 
Then there exists some  constant $C=C(\Omega, \alpha)$ depending only on $\Omega$ and $\alpha$ such that
\begin{align}
 \|y\|_{C^{{2+\alpha}, {1+\frac{\alpha}{2}}}(\overline{Q}_T)} \leq C e^{ (3+T) \|V\|_{\infty}}(1+\|V\|_{Z}) \| y_0\|_{C^{2+\alpha}(\Omega)}.
\end{align}
\label{lemm-reg}
\end{lemma}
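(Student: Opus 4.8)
The plan is to prove the estimate by treating the zeroth order term $Vy$ as a \emph{forcing term} rather than as a coefficient, so that at every step the differential operator is the constant--coefficient heat operator $\partial_t-\Delta$ with homogeneous Dirichlet data; this is what keeps all the parabolic $L^p$ and Schauder constants absolute and, crucially, independent of $T$ (the Dirichlet heat semigroup $e^{t\Delta}$ decays exponentially on $L^\infty(\Omega)$, so the relevant estimates localize in time). Concretely, I would reduce the lemma to three ingredients: (i) a maximum--principle bound $\|y\|_{L^\infty(Q_T)}\le e^{T\|V_-\|_\infty}\|y_0\|_{L^\infty}$, obtained by comparing $y$ with $\pm e^{t\|V_-\|_\infty}\|y_0\|_{L^\infty}$; (ii) a parabolic Hölder estimate giving $\|y\|_{C^{\alpha,\frac{\alpha}{2}}(\overline{Q}_T)}\le Ce^{(1+T)\|V\|_\infty}\|y_0\|_{C^{2+\alpha}(\Omega)}$; and (iii) the parabolic Schauder estimate applied with the now--Hölder forcing term $-Vy$, which upgrades $C^{\alpha,\frac{\alpha}{2}}$ to $C^{2+\alpha,1+\frac{\alpha}{2}}$ and produces the single factor $(1+\|V\|_Z)$.

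For ingredient (ii), I would set $w:=y-y_0$, which solves $w_t-\Delta w=-Vy+\Delta y_0$ with $w(\cdot,0)=0$ and $w=-y_0=0$ on $\Sigma_T$ (using the zeroth--order compatibility $y_0|_{\partial\Omega}=0$, forced since $y_0$ is the datum of a solution vanishing on $\Sigma_T$). The right--hand side lies in $L^\infty(Q_T)$ with norm $\le\|V\|_\infty\|y\|_{L^\infty}+\|y_0\|_{C^2}$, so parabolic $L^p$ regularity and Sobolev embedding (with $p$ large depending on $n$ and $\alpha$), together with the homogeneous data, give $\|w\|_{C^{\alpha,\frac{\alpha}{2}}(\overline{Q}_T)}\le C(\|V\|_\infty\|y\|_{L^\infty}+\|y_0\|_{C^2})$; combining with (i), the elementary bound $\|V\|_\infty+1\le e^{\|V\|_\infty}$ (valid since $\|V\|_\infty\ge 10$), $\|V_-\|_\infty\le\|V\|_\infty$, and $\|y\|_{C^{\alpha,\frac{\alpha}{2}}}\le\|w\|_{C^{\alpha,\frac{\alpha}{2}}}+\|y_0\|_{C^\alpha}$ then yields (ii). For ingredient (iii), since $V\in Z$ is Lipschitz in $(x,t)$ with parabolic Hölder seminorm controlled by $\|\nabla V\|_\infty+\|\partial_tV\|_\infty$, one has $\|V\|_{C^{\alpha,\frac{\alpha}{2}}(\overline{Q}_T)}\le C\|V\|_Z$ with $C=C(\Omega,\alpha)$, hence by the product rule $\|Vy\|_{C^{\alpha,\frac{\alpha}{2}}}\le C\|V\|_Z\|y\|_{C^{\alpha,\frac{\alpha}{2}}}$. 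The global parabolic Schauder estimate for $y_t-\Delta y=-Vy$ with data $y_0\in C^{2+\alpha}(\overline\Omega)$ then gives $\|y\|_{C^{2+\alpha,1+\frac{\alpha}{2}}(\overline{Q}_T)}\le C(\|Vy\|_{C^{\alpha,\frac{\alpha}{2}}}+\|y_0\|_{C^{2+\alpha}})\le C(1+\|V\|_Z)\|y\|_{C^{\alpha,\frac{\alpha}{2}}}+C\|y_0\|_{C^{2+\alpha}}$; substituting (ii) and absorbing $e^{(1+T)\|V\|_\infty}\le e^{(3+T)\|V\|_\infty}$ finishes the proof.

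I expect two main technical points. First, the compatibility at the parabolic corner $\partial\Omega\times\{0\}$: the $C^{2+\alpha,1+\frac{\alpha}{2}}$ Schauder estimate up to $t=0$ requires, besides $y_0=0$ on $\partial\Omega$, also $\Delta y_0=V(\cdot,0)y_0=0$ on $\partial\Omega$; this does hold in the only situation where the lemma is invoked — where $y_0$ is itself the value of a solution at a positive time, so that differentiating $y|_{\partial\Omega}=0$ in $t$ forces it — but it should be noted explicitly (otherwise the estimate is valid only for $t\ge\varepsilon$, at the cost of an extra $\varepsilon^{-N}$, as in the regularizing estimate used in the proof of Theorem~\ref{th2}). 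Second, the $T$--independence of the $L^p$ and Schauder constants: one runs the estimates on overlapping unit time--slabs and uses $\|e^{\sigma\Delta}g\|_{L^\infty}\le Ce^{-c\sigma}\|g\|_{L^\infty}$ (sub--Markovianity plus the spectral gap/ultracontractivity of the Dirichlet Laplacian), which is also what prevents $\|w\|_{L^\infty(Q_T)}$ from growing linearly in $T$. A minor but essential structural point is that this three--step decomposition keeps the dependence on $\|V\|_Z$ linear: only the final Schauder step sees the derivatives of $V$, and only linearly, while the preliminary Hölder step uses $\|V\|_\infty$ alone.
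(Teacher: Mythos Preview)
Your proposal is correct and follows essentially the same bootstrap scheme as the paper: a maximum--principle $L^\infty$ bound, then parabolic $L^p$ theory plus Sobolev embedding to get H\"older regularity of $y$, and finally the Schauder estimate applied with $-Vy$ treated as a forcing term, so that the factor $\|V\|_Z$ enters only linearly at the last step. The paper implements the $T$--independence by working on overlapping unit time slabs $\Omega\times(t,t+1)$ and gluing, whereas you invoke the spectral gap of the Dirichlet heat semigroup; you also reduce to zero initial data via $w=y-y_0$ instead of applying the global $W^{2,1,p}$ estimate directly, and you flag the first--order corner compatibility condition $\Delta y_0=0$ on $\partial\Omega$ that the paper leaves implicit---a point that is indeed needed for the estimate up to $t=0$ as stated.
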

\begin{proof}
We apply a well-known bootstrap argument to get the higher order regularity with emphasis on how the constant depends on the potential $V$.
First of all, the $L^\infty$ bound can be shown simply by the maximal principle for the heat equation \cite[Theorem 2.10]{L96}, i.e.,
\begin{equation}\label{est.MP}
    \|y\|_{L^\infty(0, T; L^\infty)}\leq e^{T\| V\|_{\infty}} \| y_0\|_{L^\infty(\Omega)}.
\end{equation}

Next, we apply the classical $L^p$ estimate of heat equations to \eqref{nonlinear-core} to obtain the $W^{2, 1; p}$ estimate, see e.g. \cite{L96}. If $T<1$, we may extend the Lipschitz potential from $Q_T$ to $\Omega\times (0,1)$ such that the global $W^{2, 1; p}$ estimate can be applied in $\Omega\times (0,1)$. If $T>1$, we apply the global $W^{2, 1; p}$ estimate in $\Omega\times (0,1)$ to get
\begin{equation}\label{est.w21p-1}
\begin{aligned}
    \|y\|_{W^{2, 1, p}(\Omega\times (0,1))}  & \leq C (\|Vy\|_{L^\infty(\Omega\times (0,1))} +\| y_0\|_{C^{2}(\Omega)}) \\
    & \le C(1+ \| V\|_\infty ) e^{\| V\|_\infty} \| y_0\|_{C^{2}(\Omega)},
\end{aligned}
\end{equation}
where we have used \eqref{est.MP} in the second inequality.
For $\Omega\times (t,t+1)$ with $t\geq 1$, we apply the interior $W^{2,1;p}$ estimate (local in time) to obtain
\begin{equation}\label{est.w21p-2}
\begin{aligned}
    \|y\|_{W^{2, 1, p}(\Omega\times (t,t+1))}  & \leq C ( \|Vy\|_{L^\infty(\Omega\times (t-1,t+1))} + \| y\|_{L^\infty(\Omega\times (t-1,t+1))} ) \\
    & \leq C(1 + \| V\|_\infty ) e^{(t+1) \| V \|_\infty} \| y_0 \|_{L^\infty(\Omega)},
\end{aligned}
\end{equation}
where we have used \eqref{est.MP} again in the second inequality. 



Now, we use the Sobolev embedding theorem to get
\begin{equation}\label{est.C1beta-1}
    \|  y\|_{C^{1+\beta, \frac{1+\beta}{2}}(\Omega \times(0,1))} \le C\|y\|_{W^{2, 1, p}(\Omega\times (0,1))},
\end{equation}
and for any $t\geq 1$,
\begin{equation}\label{est.C1beta-2}
    \|  y\|_{C^{1+\beta, \frac{1+\beta}{2}}(\Omega \times(t,t+1))} \le C\|y\|_{W^{2, 1, p}(\Omega\times (t,t+1))},
\end{equation}
where 
$p>n+2$ and $0<\beta=1-\frac{n+2}{p}<1$. 

Finally, we apply the $C^{2+\alpha,1+\frac{\alpha}{2}}$ estimate in $\Omega \times (0,1)$ and $\Omega\times (t,t+1/2)$ for each $t\geq 1$. For the former case, we have
\begin{equation}\label{est.C2alpha-1}
    \|y\|_{C^{2+\alpha,1+\frac{\alpha}{2}}(\Omega\times (0,1))}   \leq C (\|Vy\|_{C^{\alpha, \frac{\alpha}{2}}(\Omega\times (0,1))} +\| y_0\|_{C^{2+\alpha}(\Omega)}).
\end{equation}
Using \eqref{est.MP}, \eqref{est.w21p-1} and \eqref{est.C1beta-1}, we have
\begin{equation*}
\begin{aligned}
    \|Vy\|_{C^{\alpha, \frac{\alpha}{2}}(\Omega\times (0,1))} & \le \|V\|_{C^{\alpha, \frac{\alpha}{2}}(Q_T)} \| y\|_{L^\infty(\Omega\times (0,1))} + \| V \|_{L^\infty(\Omega\times (0,1))}\|y \|_{C^{\alpha, \frac{\alpha}{2}}(\Omega\times (0,1))} \\
    & \le \|V\|_{C^{\alpha, \frac{\alpha}{2}}(Q_T)} e^{\| V \|_\infty} \| y_0\|_{L^\infty(\Omega)}  + C\| V\|_\infty e^{2\| V\|_\infty} \| y_0 \|_{C^2(\Omega)} \\
    & \le C\| V\|_Z e^{3\|V\|_\infty} \| y_0\|_{C^2(\Omega)}.
\end{aligned}
\end{equation*}
Inserting this into \eqref{est.C2alpha-1}, we obtain
\begin{equation}
    \|y\|_{C^{2+\alpha,1+\frac{\alpha}{2}}(\Omega\times (0,1))} \le C(1+\| V\|_Z) e^{3\|V\|_\infty} \| y_0 \|_{C^{2+\alpha}(\Omega)}.
\end{equation}
For the latter case in $\Omega\times (t,t+1/2)$ with $t\geq 1$, it follows from a similar argument that
\begin{equation}
    \|y\|_{C^{2+\alpha,1+\frac{\alpha}{2}}(\Omega\times (t,t+1/2))} \le C(1+\| V\|_Z) e^{(t+3)\|V\|_\infty} \| y_0 \|_{L^\infty(\Omega)}.
\end{equation}
As a consequence, we glue these estimates to get the desired estimate \eqref{lemm-reg}.
\end{proof}

By Proposition \ref{pro1}, there exists a control $\tilde{h}\in L^2(  \omega_2\times (0, T))$ such that the associated solution $\tilde{y}$ satisfies
\begin{equation}
    \left\{
    \begin{aligned}
        \tilde{y}_t-\Delta\tilde{y}+V(x,t)\tilde{y} &=\tilde{h} \mathds{1}_{\omega_2} \quad  &\text{in }& \ \Omega\times (0, T) , \\
        \tilde{y} &=0  \quad &\text{on }& \ \partial\Omega\times (0, T), \\
        \tilde{y}(\cdot,0) &= y_0    &\text{on }& \  \Omega,
    \end{aligned}
    \right.
    \label{appen-reg}
\end{equation}
and $y(\cdot,T) = 0$.
In the proof of Theorem \ref{th2}, we need the following lemma.
\begin{lemma}
Assume $\tilde{y}$ satisfies (\ref{appen-reg}) and $\omega_2\Subset \omega_3 \Subset \Omega$. Then there exists an open set $\OO \subset \Omega$ such that
$\partial \omega_3\subset \OO$ and
\begin{equation}\label{est.intC2alpha}
    \|\tilde{y}\|_{C^{2+\alpha, 1+\frac{\alpha}{2}}(\OO\times (0, T))} \le C(1+\| V\|_Z) e^{C(T+1)\|V\|_\infty} (\| y_0\|_{C^{2+\alpha}(\Omega)} + \| \tilde{h}\|_{L^2(Q_T)}).
\end{equation}
\label{last-reg-lem}
\end{lemma}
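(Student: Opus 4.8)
The idea is that near $\partial\omega_3$ the control term $\tilde h\mathds{1}_{\omega_2}$ is switched off, so $\tilde y$ there solves a homogeneous parabolic equation with a Lipschitz potential, and one can run the same bootstrap as in Lemma \ref{lemm-reg}, but \emph{interior} in the spatial variable only. Throughout we use that $y_0\in C^{2+\alpha}(\overline\Omega)$, as in the setting of Theorem \ref{th2}.

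First I would fix the geometry. Since $\omega_2\Subset\omega_3\Subset\Omega$, the compact set $\partial\omega_3$ is disjoint both from $\overline{\omega_2}$ and from $\partial\Omega$, so one may choose a neighborhood $\OO$ of $\partial\omega_3$ together with a finite chain of nested open sets $\partial\omega_3\subset\OO=\OO^{(0)}\Subset\OO^{(1)}\Subset\cdots\Subset\OO^{(N)}\Subset\Omega\setminus\overline{\omega_2}$, where $N$ depends only on $n$ and $\alpha$. Next I would decompose $\tilde y=y^{(1)}+y^{(2)}$, where $y^{(1)}$ solves the uncontrolled problem \eqref{nonlinear-core} with initial datum $y_0$ and $y^{(2)}$ solves \eqref{appen-reg} with zero initial datum. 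The piece $y^{(1)}$ is dealt with immediately by Lemma \ref{lemm-reg}, which gives the global bound $\|y^{(1)}\|_{C^{2+\alpha,1+\frac{\alpha}{2}}(\overline Q_T)}\le C(1+\|V\|_Z)e^{C(T+1)\|V\|_\infty}\|y_0\|_{C^{2+\alpha}(\Omega)}$, and in particular this bound holds on $\OO\times(0,T)$. It remains to estimate $y^{(2)}$ on $\OO\times(0,T)$.

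For $y^{(2)}$ I would start from the energy estimate \eqref{reg-first} of Lemma \ref{lemm-11}, applied with vanishing initial datum and $F=\tilde h\mathds{1}_{\omega_2}$, which yields $\sup_{0\le t\le T}\|y^{(2)}(\cdot,t)\|_{L^2(\Omega)}+\|y^{(2)}\|_{L^2(0,T;H^1(\Omega))}\le Ce^{CT\|V\|_\infty}\|\tilde h\|_{L^2(Q_T)}$. On $\OO^{(N)}\times(0,T)$ the function $y^{(2)}$ satisfies $\partial_t y^{(2)}-\Delta y^{(2)}+Vy^{(2)}=0$ with $y^{(2)}(\cdot,0)=0$, since $\mathds{1}_{\omega_2}$ vanishes there. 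I would then iterate interior-in-space parabolic estimates down the chain $\OO^{(N)}\supset\cdots\supset\OO^{(0)}$: at the $k$-th step, multiply $y^{(2)}$ by a smooth spatial cutoff equal to $1$ on $\OO^{(k-1)}$ and supported in $\OO^{(k)}$, so that the product solves a heat equation (with potential $V$) on the \emph{fixed} cylinder $\OO^{(k)}\times(0,T)$ with vanishing initial and lateral data and with a right-hand side consisting of first-order commutator terms controlled by the regularity already obtained at step $k-1$. The vanishing initial datum makes all compatibility conditions at $t=0$ automatic, and there is no obstruction at $t=T$ for the forward parabolic problem. Using the classical $W^{2,1;p}$ estimate together with Sobolev embedding exactly as in \eqref{est.w21p-1}--\eqref{est.C1beta-1}, after finitely many steps one obtains $y^{(2)}\in C^{1+\beta,\frac{1+\beta}{2}}(\OO^{(1)}\times(0,T))$ for some $\beta\in(0,1)$ depending on $n$; since $V$ is Lipschitz this gives $Vy^{(2)}\in C^{\alpha,\frac{\alpha}{2}}(\OO^{(1)}\times(0,T))$ with $\|Vy^{(2)}\|_{C^{\alpha,\frac{\alpha}{2}}}\le C\|V\|_Z\|y^{(2)}\|_{C^{1+\beta,\frac{1+\beta}{2}}}$, and one last interior Schauder estimate yields $\|y^{(2)}\|_{C^{2+\alpha,1+\frac{\alpha}{2}}(\OO\times(0,T))}\le C(1+\|V\|_Z)e^{C(T+1)\|V\|_\infty}\|\tilde h\|_{L^2(Q_T)}$. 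Adding this to the bound for $y^{(1)}$ gives \eqref{est.intC2alpha}.

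The step needing the most care is the bookkeeping of the $V$-dependence along the iteration. Each $W^{2,1;p}$ or maximum-principle step contributes only a factor $(1+\|V\|_\infty)$ and an exponential factor $e^{C\|V\|_\infty}$ (since $\|Vy^{(2)}\|_{L^p}\le\|V\|_\infty\|y^{(2)}\|_{L^p}$), and as the number of steps $N$ depends only on $n$ and $\alpha$, all of these collapse into $e^{C(T+1)\|V\|_\infty}$ via $(1+\|V\|_\infty)\le e^{\|V\|_\infty}$; the full Lipschitz norm $\|V\|_Z=\|V\|_\infty+\|\nabla V\|_\infty+\|\partial_t V\|_\infty$ is used only once, in estimating the $C^{\alpha,\frac{\alpha}{2}}$-norm of the product $Vy^{(2)}$ in the final Schauder step, so it appears only linearly, matching the form of the asserted constant. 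A minor technical point is to check that the parabolic $W^{2,1;p}$ and Schauder constants on the cylinders $\OO^{(k)}\times(0,T)$ do not degenerate as $T$ varies (at worst they grow like $e^{CT}$, which is harmless), which is classical for the Dirichlet problem on a fixed spatial domain.
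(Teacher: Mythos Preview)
Your proposal is correct and takes a genuinely different route from the paper's own proof. The paper does \emph{not} decompose $\tilde y$; instead it works directly with $\tilde y$ on local parabolic cylinders $\widetilde Q_r(x_0,t)=B_r(x_0)\times(t-r,t)$ centered at points $x_0\in\partial\omega_3$ and $t\in(2r,T)$, applies the interior bootstrap of Lemma~\ref{lemm-reg} there starting from the global energy $L^2$ bound, and treats the bottom slab $B_{2r}(x_0)\times(0,2r)$ separately as an interior initial-value problem with smooth datum $y_0$. The final estimate is obtained by gluing over a finite cover of $\partial\omega_3$ and over $t\in(0,T)$, setting $\OO=\bigcup_{x_0\in\partial\omega_3}B_r(x_0)$.

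Your decomposition $\tilde y=y^{(1)}+y^{(2)}$ cleanly separates the role of the initial datum (handled globally by Lemma~\ref{lemm-reg}) from the control, and lets you run a \emph{space-only} cutoff bootstrap for $y^{(2)}$ on the full time interval, exploiting that $y^{(2)}(\cdot,0)=0$ kills all compatibility issues at $t=0$; this avoids the paper's time-slicing and its separate handling of the initial slab. The paper's approach, in turn, is more local and sidesteps the question of the $T$-dependence of the global-in-time maximal $L^p$ constant that you flag at the end. Both yield the same form $(1+\|V\|_Z)\,e^{C(T+1)\|V\|_\infty}$ for the constant, and your observation that $\|V\|_Z$ enters only once---in the final Schauder step via the $C^{\alpha,\alpha/2}$ norm of $Vy^{(2)}$---matches precisely what happens in the paper's argument.
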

\begin{proof}
    First of all, by the energy estimate, we have
    \begin{equation}
        \| \tilde{y} \|_{L^2(Q_t)} \le Ce^{Ct\| V\|_\infty} (\| y_0\|_{L^2(\Omega)} + \| \tilde{h}\|_{L^2(Q_t)} ),
    \end{equation}
    for any $t\in (0,T]$.
    Let $x_0 \in \partial \omega_3$ and $r>0$ such that $B_{2r}(x_0) \subset \Omega \setminus \omega_2$. Due to the compactness of $\partial \omega_3$, the radius $r>0$ can be chosen universally independent of $x_0 \in \partial \omega_3$. Let $\widetilde{Q}_r(x_0,t) = B_r(x_0) \times (t-r,t)$. Observe that if $t\geq 2r$, we have
    \begin{equation}
        \tilde{y}_t-\Delta\tilde{y}+V(x,t)\tilde{y}=0, \qquad \text{in } \widetilde{Q}_{2r}(x_0,t).
    \end{equation}
    Thus we can get the $C^{2+\alpha,1+\frac{\alpha}{2}}$ estimate in $\widetilde{Q}_r(x_0,t)$ by the classical interior Schauder theory. To clarify how the constant depends on the potential $V$, we may  apply a bootstrap argument as in the proof of Lemma \ref{lemm-reg} to get
    \begin{equation}\label{est.localC2alpha-1}
    \begin{aligned}
        \| \tilde{y} \|_{C^{2+\alpha,1+\frac{\alpha}{2}}(\widetilde{Q}_{r}(x_0,t))} & \le  C(1+\| V\|_Z) e^{C(t+1)\|V\|_\infty} \| \tilde{y} \|_{L^2(\widetilde{Q}_{2r}(x_0,t))} \\
        & \le C(1+\| V\|_Z) e^{C(t+1)\|V\|_\infty} (\| y_0\|_{L^2(\Omega)} + \| \tilde{h}\|_{L^2(Q_t)}).
    \end{aligned}
    \end{equation}
    The constant $C$ possibly depends on the radius $r$, which is harmless since it has a uniform lower bound. 

    Similarly, near $t = 0$, we consider
    \begin{equation}
    \left\{
    \begin{aligned}
        \tilde{y}_t-\Delta\tilde{y}+V(x,t)\tilde{y} &=0 \quad  &\text{in }& \ B_{2r}(x_0) \times (0, 2r) , \\
        \tilde{y}(\cdot,0) &= y_0    &\text{on }& \  B_{2r}(x_0).
    \end{aligned}
    \right.
\end{equation}
Then the interior Schauder estimate with smooth initial data yields 
\begin{equation}\label{est.localC2alpha-2}
\begin{aligned}
    & \|\tilde{y}\|_{C^{2+\alpha,1+\frac{\alpha}{2}}(B_r(x_0 )\times (0,2r))} \\
    & \le C(1+\| V\|_Z) e^{C \|V\|_\infty} ( \| y_0 \|_{C^{2+\alpha}(B_{2r}(x_0))}  + \| \tilde{y} \|_{L^2(B_{2r}(x_0) \times (0, 2r))} ) \\
    & \le C(1+\| V\|_Z) e^{C \|V\|_\infty} ( \| y_0 \|_{C^{2+\alpha}(\Omega)} + \| \tilde{h} \|_{L^2(Q_{2r})}).
\end{aligned}
\end{equation}
Gluing the estimate \eqref{est.localC2alpha-1} with all $t\in (2r,T)$ and $x_0 \in \partial \omega_3$ with \eqref{est.localC2alpha-2}, we get \eqref{est.intC2alpha} with $\OO = \cup_{x_0 \in \partial\omega_3} B_r(x_0)$.
\end{proof}

\small
\bibliography{Mybib}
\bibliographystyle{abbrv}
\end{document}